\theoremstyle{plain}
\newtheorem{theorem}{Theorem}[section]
\newtheorem{lemma}[theorem]{Lemma}
\theoremstyle{definition}
\newtheorem{assumption}{Assumption}
\theoremstyle{remark}
\newtheorem*{notation}{Notation}
\title{Convergence theory for IETI-DP solvers for discontinuous Galerkin
Isogeometric Analysis \\ that is
explicit in $h$ and $p$ }
\author{Rainer Schneckenleitner\footnote{\texttt{schneckenleitner@numa.uni-linz.ac.at},
Institute of Computational Mathematics, Johannes
Kepler University Linz, Austria}\; and Stefan Takacs\footnote{\texttt{stefan.takacs@ricam.oeaw.ac.at},
Johann Radon Institute Institute for Computational and Applied Mathematics, Austrian Academy of Sciences, Linz, Austria}}
\date{}
\begin{document}
\maketitle

\selectlanguage{english}
\begin{abstract}
In this paper, we develop a convergence theory for Dual-Primal Isogeometric Tearing
and Interconnecting (IETI-DP) solvers for isogeometric multi-patch discretizations of the
Poisson problem, where the patches are coupled using
discontinuous Galerkin. The presented theory provides condition
number bounds that are explicit in the grid sizes $h$ and in the spline
degrees $p$. We give an analysis that holds for various choices for the primal degrees
of freedom: vertex values, edge averages, and a combination of both.
If only the vertex values or
both vertex values and edge averages are taken as primal degrees of freedom,
the condition number bound is the same as for the conforming case. If only the
edge averages are taken, both the convergence theory and the experiments show
that the condition number of the preconditioned system grows with the ratio of the
grid sizes on neighboring patches.
\end{abstract}

\section{Introduction}
\label{sec:1}

Isogeometric Analysis (IgA), see~\cite{HughesCottrellBazilevs:2005,Cottrell:Hughes:Bazilevs}, is
an approach for discretizing partial differential equations (PDEs) that
has been developed in order to improve the compatibility between computer aided
design (CAD) and simulation in comparison to the standard finite element method (FEM).
The geometry function, that is used in the CAD system to parameterize
the computational domain, is also used for the simulation.
These geometry functions are usually spanned
by B-splines or non-uniform rational B-splines (NURBS). Following the
principle of IgA, we also use such functions for the discretization
of the PDE. Since only simple domains can be represented by just one geometry
function (single-patch case), the overall computational domain is usually decomposed into multiple patches,
each of which is parameterized using its own geometry function (multi-patch IgA).
We focus on non-overlapping patches.

The patches can be coupled either in a conforming way or by means of
discontinuous Galerkin methods. For a conforming discretization both the
geometry function and the discretization have to agree on the interfaces
between the patches. One promising alternative to overcome these restrictions
are discontinuous
Galerkin approaches, cf.~\cite{Riviere:2008,Arnold:Brezzi:Cockburn:Marini:2002}, particularly
the symmetric interior penalty discontinuous Galerkin (SIPG) method,
cf.~\cite{Arnold:1982}. 
The idea of applying this technique to couple patches in IgA, has been previously discussed in~\cite{LangerMantzaflaris:2015,LangerToulopoulos:2015,Takacs:2019b}.

After the discretization of the PDE, we obtain a large-scale linear system and
we are interested in fast iterative solvers for such systems.
Since we are in a multi-patch
framework, domain decomposition solvers are a canonical choice. One of the most
popular domain decomposition solvers for large-scale systems of finite element equations in standard FEM
is the Finite Element Tearing and Interconnecting method (FETI), originally
proposed in~\cite{FarhatRoux:1991a}. Since the invention of FETI, various FETI-type methods have been developed, cf.~\cite{Pechstein:2013a, ToselliWidlund:2005a, KorneevLanger:2015}. In \cite{KleissPechsteinJuttlerTomar:2012}, it was proposed
to use a FETI-type method, namely the Dual-Primal FETI method (FETI-DP),
in the context of IgA. This method was called Dual-Primal Isogeometric Tearing and Interconnecting (IETI-DP) method. Later, this approach has been further analyzed, particularly
in~\cite{HoferLanger:2019a} and more recently in~\cite{SchneckenleitnerTakacs:2019}.
In the latter paper, the authors of the paper at hand have analyzed the
dependence of the condition number of the preconditioned system on the spline
degree $p$.

The extension of FETI methods or IETI methods to dG discretizations is not
straight-forward. We follow the approaches that have been proposed
in~\cite{DryjaGalvis:2013} and adapted for IETI in~\cite{Hofer:2016a}, particularly
the idea of using artificial interfaces. Up to the knowledge of the authors,
so far, there is no convergence analysis that covers the dependence of the
condition number of the preconditioned system on the spline degree $p$. Moreover,
we are not aware of convergence analysis for IETI-DP for dG discretizations
that covers the case that only edge averages are taken as primal degrees of
freedom. Such approaches might be of interest if the overall computational domain
in decomposed into patches in a way that allows T-junctions or in the case of moving patches like in the case of rotating electrical machines.

For conforming finite element discretizations, condition number bounds that are explicit in the polynomial degree $p$ have been worked out previously
for FETI-DP type methods, cf.~\cite{PAVARINO:2006, KlawonnPavarino:2008},
and other Schwarz type, cf.~\cite{SchoeberlMelenkPechsteinZaglmayr:2007, GuoCao:1997}, and iterative substructuring methods, cf,~\cite{Ainsworth:1996, PavarinoWidlund:1996}.

For dG discretizations, there are only a few results concerning a
$p$-analysis for domain decomposition approaches, like
\cite{AntoniettiAyuso:2007,SchoeberlLehrenfeld:2013} for two-level Schwarz
type approaches or \cite{DryjaGalvis:2007,CanutoPavarinoPieri:2014},
where the estimates for BDDC and FETI-DP type methods for spectral
element methods and $hp$-FEM are given.  
The publication \cite{AntoniettiHouston:2011} shows a bound for general non-overlapping Schwarz preconditioners for dG $hp$-FEM that is in the order of $p^2$. A polylogarithmic bound for a BDDC preconditioner for a hybridizable discontinuous Galerkin discretization has been developed in \cite{DiosadyDarmofal:2013}.

In this paper, we consider the Poisson problem on planar domains.
We consider a IETI-DP solver with a scaled Dirichlet preconditioner. The proof follows the abstract framework
from~\cite{MandelDohrmannTezaur:2005a} and is a continuation of
the paper~\cite{SchneckenleitnerTakacs:2019}, where we have
analyzed IETI-DP methods for conforming discretizations. 
The presented theory covers three different choices for the primal degrees of
freedom: vertex values only (Alg.~A), edge averages only (Alg.~B), and the
combination of both (Alg.~C). We prove that the condition of the preconditioned IETI-DP solver is bounded by 
\[
	C\, p \; 
	\left(1+\log p+\max_{k=1,\ldots,K} \log\frac{H_{k}}{h_{k}}\right)^2,
\]
for Alg.~A and C and by
\[
	C\, \delta\,  p
	 \left(
	\max_{k=1,\ldots,K}\max_{\ell \in \mathcal N_{\Gamma}(k)}
	\frac{h_{k}}{h_{\ell}}
	\right)
	\left(1+\log p+\max_{k=1,\ldots,K} \log\frac{H_{k}}{h_{k}}\right)^2
\]
for Alg.~B, where $p$ is the spline degree, $h_k$ are the grid sizes and $H_k$ are the patch sizes, $\mathcal N_{\Gamma}(k)$ contains the indices of the patches that share an edge with the $k$-th patch, and $\delta\ge\delta^*$ is a suitably chosen penalty parameter. Both the constant $C$ and the optimal penalty parameter $\delta^*>0$ are independent of the grid sizes, the patch sizes, the spline degree and the smoothness of the splines. Hence, the theory covers all discretizations where the smoothness within the patches is between $C^0$ and $C^{p-1}$.

The structure of this paper is as follows. In Section \ref{sec:2}, we introduce our model problem and its discretization using SIPG. Then, in Section \ref{sec:3}, we present the IETI-DP solver. Section~\ref{sec:4} is devoted to the proof of
the condition number bounds. The numerical results are shown in
Section \ref{sec:5}. In Section~\ref{sec:6}, we summarize our findings
and give some further remarks. 

\section{The model problem}
\label{sec:2}

We consider the discretization of a homogeneous Poisson problem
using multi-patch Isogeometric Analysis,
where the coupling between the individual patches is realized using a
SIPG approach.
Since this paper extends the results of the previous paper
\cite{SchneckenleitnerTakacs:2019}, which covered conforming discretizations,
we aim to use the same notation as in the aforementioned paper.
To keep the paper self-contained, we briefly reintroduce the notation.

We consider an open, bounded and simply connected domain $\Omega \subset \mathbb{R}^2$ with
Lipschitz boundary $\partial \Omega$. We are interested in solving the homogeneous Poisson problem: find $u \in H^1_0(\Omega)$
such that 
\begin{align}
	\label{continousProb}
	\int_{\Omega}^{} \nabla u \cdot \nabla v \; \mathrm{d}x = \int_{\Omega}^{} fv \; \mathrm{d}x \qquad\text{for all}\qquad v \in H^1_0(\Omega),
\end{align}
where $f \in L_2(\Omega)$ is a given function.
Throughout the paper, we denote by $L_2(\Omega)$ and $H^s(\Omega)$, $s \in \mathbb{R}$, the usual Lebesgue and Sobolev spaces, respectively. $H^1_0(\Omega) \subset H^1(\Omega)$ is the subspace of functions
whose trace vanishes on $\partial \Omega$.
We equip these Hilbert spaces with the usual scalar products $(\cdot, \cdot)_{L_2(\Omega)}$ and $(\cdot, \cdot)_{H^1(\Omega)}:=(\nabla \cdot, \nabla \cdot)_{L_2(\Omega)}$, norms $\| \cdot \|_{L_2(\Omega)}$, $\| \cdot \|_{H^s(\Omega)}$, and seminorms $| \cdot |_{H^s(\Omega)}$.

The domain $\Omega$ is the composition of $K$ non-overlapping subdomains $\Omega^{(k)}$, i.e., 
\begin{align*}
	\overline{\Omega} = \bigcup_{k=1}^K \overline{\Omega^{(k)}} \quad \text{and}\quad
	\Omega^{(k)} \cap \Omega^{(\ell)} = \emptyset \quad\text{for all}\quad k \neq \ell,
\end{align*}
where $\overline{T}$ denotes the closure of the set $T$. We refer to the subdomains $\Omega^{(k)}$ as patches. We assume that every patch $\Omega^{(k)}$ is the image of a geometry function 
\begin{align}
	G_k:\widehat{\Omega}:=(0,1)^2 \rightarrow \Omega^{(k)}:=G_k(\widehat{\Omega}) \subset \mathbb{R}^2, 
\end{align}
that can be continuously extended to the closure of $\widehat\Omega$.
Although the geometry functions can be arbitrary functions,
in IgA commonly B-spline or NURBS functions are used. We assume that the
geometry function is sufficiently smooth such that the following assumption holds.
\begin{assumption}
	\label{ass:nabla}
	There is a constant $C_1>0$ such that 
	\begin{align*}
		\| \nabla G_k \|_{L_\infty(\widehat{\Omega})} \le C_1\, H_{k}
		\quad\text{and}\quad
		\| (\nabla G_k)^{-1} \|_{L_\infty(\widehat{\Omega})} \le C_1\, \frac{1}{H_{k}}
	\end{align*}
	for all $k=1,\ldots,K$, where $H_k > 0$ is the diameter of the patch $\Omega^{(k)}$. 
\end{assumption}

The next assumption guarantees that $\Omega$ does not have any T-junctions.
\begin{assumption}
	\label{ass:conforming}
	For any two patch indices $k\not=\ell$, the intersection
	$\overline{\Omega^{(k)}} \cap \overline{\Omega^{(\ell)}}$
	is either a common edge (including the neighboring vertices),
	a common vertex, or empty.
\end{assumption}
This assumption also ensures that the pre-images $\widehat \Gamma_D^{(k)}
:= G_k^{-1}(\partial\Omega\cap\partial\Omega^{(k)})$ of the (Dirichlet) boundary $\Gamma_D=\partial
\Omega$ consist of whole edges.

For any two neighboring patches $\Omega^{(k)}$ and $\Omega^{(\ell)}$, the common edge is denoted by $\Gamma^{(k,\ell)}=\Gamma^{(\ell,k)}$, and its pre-images by
$\widehat{\Gamma}^{(k,\ell)}:=G_k^{-1}(\Gamma^{(k,\ell)})$ and
$\widehat{\Gamma}^{(\ell,k)}:=G_{\ell}^{-1}(\Gamma^{(\ell,k)})$. We collect the indices of patches sharing an edge in a set:
\[
	\mathcal N_\Gamma(k):=
		\{ \ell \neq k \;:\; \Omega^{(k)}\mbox{ and }\Omega^{(\ell)}
		\mbox{ share at least one edge}\}.
\]
We do the same for two patches $\Omega^{(k)}$ and $\Omega^{(\ell)}$ sharing only a vertex. We denote that common vertex by $\textbf x^{(k,\ell)}=\textbf x^{(\ell,k)}$,
as well as its representations in the parameter domain by
$\widehat{\textbf x}^{(k,\ell)}:=G_k^{-1}(\textbf x^{(k,\ell)})$ and
$\widehat{\textbf x}^{(\ell,k)}:=G_{\ell}^{-1}(\textbf x^{(\ell,k)})$. The
indices of the patches that contain a certain corner are collected in the set
\[
		\mathcal{P}(\textbf x):= \{k\;:\; \textbf x\in \overline{\Omega^{(k)}} \}.
\]
Moreover, we require that the number of neighbors of a patch is uniformly bounded.
\begin{assumption}
	\label{ass:neighbors}
	There is a constant $C_2>0$ such that
	$
		| \mathcal{P}(\textbf x)| \le C_2
	$ 
	holds for every corner $\textbf x$.
\end{assumption}

After the introduction of the computational domain, we establish the isogeometric function spaces. In IgA, those function spaces are either B-spline or NURBS
functions. In this paper, we focus on B-splines.
Let $p \in \mathbb{N}:={1,2,3,\dots}$ be a given spline degree. For ease of
notation, we assume that it is the same for all patches. 
For $n\in \mathbb{N}$, a $p$-open knot vector
\[
	\Xi=(\xi_1,\ldots,\xi_{n+p+1})
	=(\underbrace{\zeta_1,\ldots,\zeta_1}_{\displaystyle m_1 },
	\underbrace{\zeta_2,\ldots,\zeta_2}_{\displaystyle m_2},
	\ldots,
	\underbrace{\zeta_{N_Z},\ldots,\zeta_{N_Z}}_{\displaystyle m_{N_Z}})
\]
with multiplicities $m_1=m_{N_Z}=p+1$,
and $m_i\in\{1,\ldots,p\}$ for $i=2,\ldots,N_Z-1$ and breakpoints
$\zeta_1<\zeta_2<\cdots<\zeta_{N_Z}$ is the building block
for the B-spline basis $(B[p,\Xi,i])_{i=1}^n$. The individual
basis functions $B[p,\Xi,i]$ are defined via the
Cox-de~Boor formula, cf.~\cite[Eq.~(2.1) and (2.2)]{Cottrell:Hughes:Bazilevs}.
This basis spans the univariate spline space
\[
	S[p,\Xi]:=\text{span}\{B[p,\Xi,1],\ldots, B[p,\Xi,n] \}.
\]
Let $\Xi^{(k,1)}$ and $\Xi^{(k,2)}$ be two $p$-open
knot vectors over $(0,1)$. To get a multivariate spline space $\widehat{V}^{(k)}$ over $\widehat \Omega$, we tensorize the two univariate spline spaces. The transformation of $\widehat{V}^{(k)}$ to the physical domain is defined by the pull-back principle. We denote the resulting space by $V^{(k)}$. So, we define
\begin{equation}\label{eq:vkdef}
		\widehat{V}^{(k)}
					:= \{ v\in S[p,\Xi^{(k,1)}] \otimes S[p,\Xi^{(k,2)}]
							\;:\;
							v|_{\widehat \Gamma_D^{(k)}} = 0 \}	
		\quad\mbox{and}\quad
		V^{(k)} := \widehat{V}^{(k)} \circ G_k^{-1},
\end{equation}
where $v|_T$ denotes the restriction of $v$ to $T$ (trace operator).
The basis for the space $\widehat V^{(k)}$ consists of only those tensor-product basis functions over $\Omega^{(k)}$ that vanish on the pre-image of the Dirichlet boundary~$\widehat \Gamma_D^{(k)}$. Say, the total number of basis functions of $\widehat{V}^{(k)}$ is $N^{(k)}= N_\mathrm{I}^{(k)} + N_\Gamma^{(k)}$. The number $N_\mathrm{I}^{(k)}$ denotes the number of basis function that are supported only in the interior of the patch whereas $N_\Gamma^{(k)}$ accounts for the number of basis functions that contribute to the boundary of the patch. These definitions give rise to an ordered basis 
\begin{equation}\label{eq:basis:def}
	\begin{aligned}
		&\widehat \Phi^{(k)} := ( \widehat\phi_i^{(k)} )_{i=1}^{N^{(k)}},\\
		&\{ \widehat \phi_i^{(k)} \}
		= \{ \widehat \phi\,:\, \exists j_1,j_2\;:\;\widehat\phi(x,y)=B[p,\Xi^{(k,1)},j_1](x)\, B[p,\Xi^{(k,2)},j_2](y)\wedge
				\widehat \phi|_{\widehat \Gamma_D^{(k)}}  = 0 \},
		\\
		&	
		\widehat\phi^{(k)}_i|_{\partial \widehat \Omega}  = 0 \Leftrightarrow i \in\{1,\ldots, N_\mathrm{I}^{(k)}\},
		\quad
		\mbox{and}
		\quad
		\widehat\phi^{(k)}_i|_{\partial \widehat \Omega} \not = 0 \Leftrightarrow i \in N_\mathrm{I}^{(k)}+\{1,\ldots, N_\Gamma^{(k)}\}.
	\end{aligned}
\end{equation}
The pull-back principle gives a basis for $V^{(k)}$:
\[
		\Phi^{(k)} := ( \phi_i^{(k)} )_{i=1}^{N^{(k)}}
		\quad\mbox{and}\quad
		 \phi_{i}^{(k)} := \widehat \phi_{i}^{(k)} \circ G_k^{-1}.
\]
We assume in the following that the grids on each of the patches are quasi-uniform. 
\begin{assumption}
	\label{ass:quasiuniform}
	There are grid sizes $\widehat{h}_{k}>0$ for $k=1,\ldots,K$
	and a constant $C_3>0$ such that
	\[
			C_3 \, \widehat{h}_{k} \le
			\zeta_{i+1}^{(k,\delta)} - \zeta_i^{(k,\delta)} \le \widehat{h}_{k}
	\]
	holds for all $i=1,\ldots,N_Z^{(k,\delta)}-1$ and all $\delta=1,2$.
\end{assumption}

The corresponding grid size on the physical domain is defined via
$h_{k}:=\widehat{h}_{k} H_{k}$. For any two patches
$\Omega^{(k)}$ and $\Omega^{(\ell)}$ sharing an edge, we define
\[
	\widehat h_{k\ell} := \text{min} \{\widehat h_{k}, \widehat h_{\ell} \}
	\quad\mbox{and}\quad
	h_{k\ell} := \text{min} \{h_{k}, h_{\ell} \}.
\]

The product space of the local spaces $V^{(k)}$ gives the global approximation space
\begin{equation}\label{eq:vdef}
	V := \prod_{k=1}^{K} V^{(k)} := V^{(1)} \times \dots \times V^{(K)}.
\end{equation}
Using these function spaces, the dG discretization of the model problem~\eqref{continousProb} is given by: find $u
 = (u^{(1)},\cdots, u^{(K)})
 \in V$ such that 
\begin{align}
	\label{discreteVarProb}
	a_h(u,v)=\langle f,v\rangle
	\quad \mbox{for all}\quad v \in V,
\end{align}
where 
\begin{align*}
	a_h(u,v) &:= \sum_{k=1}^{K} \left( a^{(k)}(u,v) + m^{(k)}(u,v) + r^{(k)}(u,v) \right), \\
	a^{(k)}(u,v) &:= \int_{\Omega^{(k)}} \nabla u^{(k)} \cdot \nabla v^{(k)} \; \textrm{d}x, \\
	m^{(k)}(u,v) &:= \sum_{\ell \in \mathcal{N}_\Gamma(k)} \int_{\Gamma^{(k,\ell)}} \frac{1}{2} 
	\left( 
		\frac{\partial u^{(k)}}{\partial n_k}(v^{(\ell)} - v^{(k)}) + 
		\frac{\partial v^{(k)}}{\partial n_k}(u^{(\ell)} - u^{(k)})
	\right) \; \textrm{d}s, \\
	r^{(k)}(u,v) &:= \sum_{\ell \in \mathcal{N}_\Gamma(k)} \int_{\Gamma^{(k,\ell)}} \frac{\delta p^2}{h_{k\ell}}   
	(u^{(\ell)} - u^{(k)})(v^{(\ell)} - v^{(k)}) \; \textrm{d}s,\\
	\langle {f}, v \rangle
	&:=  
	\sum_{k=1}^{K} \int_{\Omega^{(k)}} f v^{(k)} \; \textrm{d}x,
\end{align*}
and $\delta > 0$ is some suitably chosen penalty parameter and $n_k$ is the outward unit normal vector of the patch $\Omega^{(k)}$.

Due to \cite[Theorem 8]{Takacs:2019b}, the
parameter $\delta$ can always be chosen independently of the spline degree $p$ and mesh sizes $h_k$ such that the bilinear form $a_h(\cdot,\cdot)$ is bounded and coercive in the dG-norm
\[
	\| v \|^2_{d} := d(v, v), \quad \text{ where } \quad
 	d(u,v) := \sum_{k=1}^{K} \left( a^{(k)}(u,v) + r^{(k)}(u,v) \right).
\]
Note that $\delta$ depends on the constant $C_1$ from Assumption~\ref{ass:nabla}.
Similar results have been shown previously, but up to the knowledge of the authors,
the dependence on $p$ has only been addressed in \cite[Theorem 8]{Takacs:2019b}.

Since we have coercivity and boundedness, the Lax-Milgram lemma ensures the
existence of a solution of~\eqref{discreteVarProb} and its uniqueness.
The solution to \eqref{discreteVarProb} is an approximation to the solution of the original problem \eqref{continousProb}, cf. \cite[Theorems 12 and 13]{Takacs:2019b}.

If we choose a basis for $V$, the discrete variational problem~\eqref{discreteVarProb}
can be rewritten in matrix-vector form. For the setup of a IETI method,
we not need this step since we directly work with the discrete variational
problem~\eqref{discreteVarProb}.

\section{The dG IETI-DP solver}
\label{sec:3}

In this section, we propose a IETI-DP solver for the variational
problem~\eqref{discreteVarProb}. As for any tearing and interconnecting
method, we have to introduce local spaces, cf., e.g.,~\cite{FarhatRoux:1991a}.
As it has been done previously for IETI methods, the local spaces are
chosen on a per-patch basis, cf.~\cite{KleissPechsteinJuttlerTomar:2012}
and later publications on IETI. In the case of dG, the choice is not completely
straight-forward. We follow the approach that has been proposed
in~\cite{DryjaGalvis:2013} and that has already been used in an IgA
context in~\cite{HoferLanger:2016a,Hofer:2016a} and others.

The patch-local subspace consists of the local functions on the particular
patch and of those functions from the neighboring patches required to realize
the bilinear forms $m^{(k)}(\cdot,\cdot)$ and $r^{(k)}(\cdot,\cdot)$.
We define the enriched function space
\begin{align*}
	V_{e}^{(k)}:= V^{(k)} \times \prod_{\ell \in \mathcal{N}_\Gamma(k)} V^{(k, \ell)},
\end{align*}
where $V^{(k,\ell)} := \{ v^{(\ell)}|_{\Gamma^{(k,\ell)}} \;:\; v^{(\ell)}\in V^{(\ell)}\} $ is the trace of $V^{(\ell)}$.
Correspondingly, we write
\begin{align}
	\label{def:representation}
	v_e^{(k)} = 
	\left(
		v^{(k)}, (v^{(k,\ell)})_{\ell\in \mathcal{N}_\Gamma(k)}
	\right)
\end{align}
where $v_e^{(k)} \in V_e^{(k)}$, $v^{(k)} \in V^{(k)}$ and $v^{(k,\ell)} \in V^{(k,\ell)}$. Note that the traces of the basis functions
\[
\{\phi^{(k,\ell)}_1,\ldots,\phi^{(k,\ell)}_{N^{(k,\ell)}}\} := 
\left\{ \phi_i^{(\ell)}|_{\Gamma^{(k,\ell)}} \;:\;
\phi_i^{(\ell)} \in \Phi^{(\ell)}, \; \phi_i^{(\ell)}|_{\Gamma^{(k,\ell)}} \not=0\right\}
\]
form a basis of $V^{(k,\ell)}$, which is uniquely defined up the the ordering of
the basis functions. We denote the basis by $\Phi^{(k,\ell)}:=
(\phi^{(k,\ell)}_i)_{i=1}^{N^{(k,\ell)}}$.

This definition can be interpreted as using artificial interfaces.
A visualization is given in Fig.~\ref{fig:ai}. The basis functions for each of
the patches are represented via different symbols. The location of the symbol
is the point where the corresponding basis function takes its maximum.
The symbols that are located on the edges and corners represent the only functions
that are supported on that edge or corner, respectively. Besides the patches
themselves, we have artificial interfaces. While they are co-located with
the (standard) interfaces, they are treated as separate entities on which the
corresponding trace spaces $V^{(k,\ell)}$ live. The basis functions living on
these trace spaces are marked with the same kind of symbol as the basis
function from the original space.

\begin{figure}
\begin{center}
   \begin{tikzpicture}
       \def\shift{0.5}
       \fill[gray!20] (-0.2,0) -- (1.5,0) -- (1.5,1.7) -- (-0.2,1.7);
       \fill[gray!20] (-0.2,-1.5-\shift) -- (1.5,-1.5-\shift) -- (1.5,-3.2-\shift) -- (-0.2,-3.2-\shift);
       \fill[gray!20] (4.7+\shift,0) -- (3.0+\shift,0) -- (3.0+\shift,1.7) -- (4.7+\shift,1.7);
       \fill[gray!20] (4.7+\shift,-1.5-\shift) -- (3.0+\shift,-1.5-\shift) -- (3.0+\shift,-3.2-\shift) -- (4.7+\shift,-3.2-\shift);

       % Subdomains
       \draw (-0.2,0) -- (1.5,0) -- (1.5,1.7) node at (.25,2) {$\Omega^{(1)}$};
       \draw (-0.2,-1.5-\shift) -- (1.5,-1.5-\shift) -- (1.5,-3.2-\shift) node at (.25,-3.5-\shift) {$\Omega^{(2)}$};
       \draw (4.7+\shift,0) -- (3.0+\shift,0) -- (3.0+\shift,1.7) node at (4.75+\shift,2) {$\Omega^{(3)}$};
       \draw (4.7+\shift,-1.5-\shift) -- (3.0+\shift,-1.5-\shift) -- (3.0+\shift,-3.2-\shift) node at (4.75+\shift,-3.5-\shift) {$\Omega^{(4)}$};

       % Spaces
       \node at (-0.7,1.3) {$V^{(1)}$};
       \node at (-0.7, -0.4) {$V^{(1,2)}$};
       \node at (2.0, 2.0) {$V^{(1,3)}$};

       \node at (-0.7,-2.8-\shift) {$V^{(2)}$};
       \node at (-0.7,-1.1-\shift) {$V^{(2,1)}$};
       \node at (2.0,-3.5-\shift) {$V^{(2,4)}$};

       \node at (5.2+\shift,1.3) {$V^{(3)}$};
       \node at (2.6+\shift,2.0) {$V^{(3,1)}$};
       \node at (5.2+\shift,-0.4) {$V^{(3,4)}$};

       \node at (5.2+\shift,-2.8-\shift) {$V^{(4)}$};
       \node at (5.2+\shift,-1.1-\shift) {$V^{(4,3)}$};
       \node at (2.6+\shift,-3.5-\shift) {$V^{(4,2)}$};

       \draw (-0.2,-0.4) -- (1.5,-0.4) {};
       \draw (1.9,0) -- (1.9,1.7) {};

       \draw (3.0+\shift,-0.4) -- (4.7+\shift,-0.4) {};
       \draw (2.6+\shift,0) -- (2.6+\shift,1.7) {};

       \draw (-0.2,-1.1-\shift) -- (1.5,-1.1-\shift) {};
       \draw (1.9,-1.5-\shift) -- (1.9,-3.2-\shift) {};

       \draw (3.0+\shift,-1.1-\shift) -- (4.7+\shift,-1.1-\shift) {};
       \draw (2.6+\shift,-1.5-\shift) -- (2.6+\shift,-3.2-\shift) {};

       \draw (1.5,0) node[circle, fill, inner sep = 2pt] (A7) {};
       \draw (1.5,1.0) node[circle, fill, inner sep = 2pt] (A9) {};
       \draw (0.35,0.0) node[circle, fill, inner sep = 2pt] (A11) {};
       \draw (0.35,1.0) node[circle, fill, inner sep = 2pt] (A11) {};

       \draw (1.9,0) node[rectangle, fill, inner xsep = 2.5pt, inner ysep = 2.5pt] (A1) {};
       \draw (1.9,0.5) node[rectangle, fill, inner xsep = 2.5pt, inner ysep = 2.5pt] (A2) {};
       \draw (1.9,1.5) node[rectangle, fill, inner xsep = 2.5pt, inner ysep = 2.5pt] (A3) {};
       \draw (0.75,-0.4) node[diamond, fill, inner sep = 2pt] (A5) {};
       \draw (1.5,-0.4) node[diamond, fill, inner sep = 2pt] (A6) {};

       \draw (3.0+\shift,0) node[rectangle, fill, inner xsep = 2.5pt, inner ysep = 2.5pt] (B7) {};
       \draw (3.0+\shift,0.5) node[rectangle, fill, inner xsep = 2.5pt, inner ysep = 2.5pt] (B8) {};
       \draw (3.0+\shift,1.5) node[rectangle, fill, inner xsep = 2.5pt, inner ysep = 2.5pt] (B9) {};
       \draw (3.75+\shift,0) node[rectangle, fill, inner xsep = 2.5pt, inner ysep = 2.5pt] (B10) {};
       \draw (3.75+\shift,0.5) node[rectangle, fill, inner xsep = 2.5pt, inner ysep = 2.5pt] (B10) {};
       \draw (3.75+\shift,1.5) node[rectangle, fill, inner xsep = 2.5pt, inner ysep = 2.5pt] (B10) {};
       \draw (4.5+\shift,0) node[rectangle, fill, inner xsep = 2.5pt, inner ysep = 2.5pt] (B11) {};
       \draw (4.5+\shift,0.5) node[rectangle, fill, inner xsep = 2.5pt, inner ysep = 2.5pt] (B11) {};
       \draw (4.5+\shift,1.5) node[rectangle, fill, inner xsep = 2.5pt, inner ysep = 2.5pt] (B11) {};

       \draw (2.6+\shift,0) node[circle, fill, inner sep = 2pt] (B1) {};
       \draw (2.6+\shift,1.0) node[circle, fill, inner sep = 2pt] (B3) {};
       \draw (3.0+\shift,-0.4) node[star, fill, inner sep = 2pt] (B4) {};
       \draw (3.75+\shift,-0.4) node[star, fill, inner sep = 2pt] (B5) {};
       \draw (4.5+\shift,-0.4) node[star, fill, inner sep = 2pt] (B6) {};

       \draw (1.5,-1.5-\shift) node[diamond, fill, inner sep = 2pt] (C7) {};
       \draw (1.5,-2.2-\shift) node[diamond, fill, inner sep = 2pt] (C8) {};
       \draw (1.5,-3.0-\shift) node[diamond, fill, inner sep = 2pt] (C9) {};
       \draw (0.75,-1.5-\shift) node[diamond, fill, inner sep = 2pt] (C11) {};
       \draw (0.75,-2.2-\shift) node[diamond, fill, inner sep = 2pt] (C11) {};
       \draw (0.75,-3.0-\shift) node[diamond, fill, inner sep = 2pt] (C11) {};

       \draw (1.9,-1.5-\shift) node[star, fill, inner sep = 2pt] (C1) {};
       \draw (1.9,-2.0-\shift) node[star, fill, inner sep = 2pt] (C2) {};
       \draw (1.9,-2.75-\shift) node[star, fill, inner sep = 2pt] (C3) {};
       \draw (0.35,-1.1-\shift) node[circle, fill, inner sep = 2pt] (C5) {};
       \draw (1.5,-1.1-\shift) node[circle, fill, inner sep = 2pt] (C6) {};

       \draw (3.0+\shift,-1.5-\shift) node[star, fill, inner sep = 2pt] (D7) {};
       \draw (3.0+\shift,-2.0-\shift) node[star, fill, inner sep = 2pt] (D8) {};
       \draw (3.0+\shift,-2.75-\shift) node[star, fill, inner sep = 2pt] (D9) {};
       \draw (3.75+\shift,-1.5-\shift) node[star, fill, inner sep = 2pt] (D10) {};
       \draw (3.75+\shift,-2.0-\shift) node[star, fill, inner sep = 2pt] (D10) {};
       \draw (3.75+\shift,-2.75-\shift) node[star, fill, inner sep = 2pt] (D10) {};
       \draw (4.5+\shift,-1.5-\shift) node[star, fill, inner sep = 2pt] (D11) {};
       \draw (4.5+\shift,-2.0-\shift) node[star, fill, inner sep = 2pt] (D11) {};
       \draw (4.5+\shift,-2.75-\shift) node[star, fill, inner sep = 2pt] (D11) {};

       \draw (2.6+\shift,-1.5-\shift) node[diamond, fill, inner sep = 2pt] (D1) {};
       \draw (2.6+\shift,-2.2-\shift) node[diamond, fill, inner sep = 2pt] (D2) {};
       \draw (2.6+\shift,-3.0-\shift) node[diamond, fill, inner sep = 2pt] (D3) {};
       \draw (3.0+\shift,-1.1-\shift) node[rectangle, fill, inner xsep = 2.5pt, inner ysep = 2.5pt] (D4) {};
       \draw (3.75+\shift,-1.1-\shift) node[rectangle, fill, inner xsep = 2.5pt, inner ysep = 2.5pt] (D5) {};
       \draw (4.5+\shift,-1.1-\shift) node[rectangle, fill, inner xsep = 2.5pt, inner ysep = 2.5pt] (D6) {};
   \end{tikzpicture}
   \captionof{figure}{Local spaces with artificial interfaces \label{fig:ai}}
\end{center}
\end{figure}
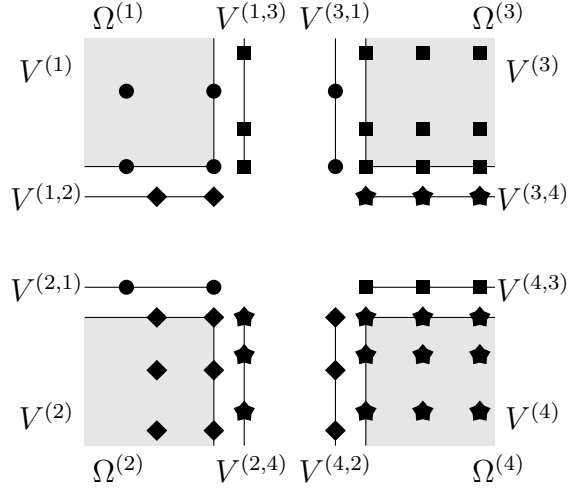

Following the structure of \eqref{def:representation}, we define a basis
$\Phi_e^{(k)} := ( \phi_{e,i}^{(k)} )_{i=1}^{N_e^{(k)}}$
with $N_e^{(k)}:=\dim V_e^{(k)}$
for the space $V_e^{(k)}$ as follows. We start with the basis functions from
the basis $\Phi^{(k)}$:
\[
		\phi_{e,i}^{(k)} := (\phi_{i}^{(k)},(0,\ldots,0))
		\quad \mbox{for}\quad i=1,\ldots,N^{(k)}.
\]
Then, there follow the basis functions from the neighboring patches. Let
$\mathcal{N}_\Gamma(k) = \{\ell_1, \ell_2, \ldots, \ell_L \}$. We define
\begin{equation}\label{eq:def extended basis functions}
\begin{aligned}
		\phi_{e,\Upsilon^{(k,\ell_1)}_i}^{(k)} &:= (0,(\phi_{i}^{(k,\ell_1)},0\ldots,0))
		\\&   \mbox{with}\; \Upsilon^{(k,\ell_1)}_i:=N^{(k)}+i
	&	\quad \mbox{for} \; i=1,\ldots,N^{(k,\ell_1)},\\
		\phi_{e,\Upsilon^{(k,\ell_2)}_i}^{(k)} &:= (0,(0,\phi_{i}^{(k,\ell_2)},0\ldots,0))
		\\&  \mbox{with}\; \Upsilon^{(k,\ell_2)}_i:=N^{(k)}+N^{(k,\ell_1)}+i
	&	\quad \mbox{for} \; i=1,\ldots,N^{(k,\ell_2)},\\
		%%%
		&\vdots\\
		\phi_{e,\Upsilon^{(k,\ell_L)}_i}^{(k)} &:= (0,(0,\ldots,0,\phi_{i}^{(k,\ell_{L})}))
		\\&  \mbox{with}\; \Upsilon^{(k,\ell_L)}_i:=N^{(k)}+ 
		N^{(k,\ell_1)}+\cdots+N^{(k,\ell_{L-1})}+i
	&	\quad \mbox{for} \; i=1,\ldots,N^{(k,\ell_{L})}.
\end{aligned}
\end{equation}
From this construction and from~\eqref{eq:basis:def}, we know that the first
$N_I^{(k)}$ basis functions of the basis $\Phi_e^{(k)}$
live in the interior of the patch $\Omega^{(k)}$.
The following $N_{\Gamma}^{(k)}$ basis functions live on the interfaces. Finally,
the remaining basis functions live on the artificial interfaces.

On the spaces $V_e^{(k)}$, we define the bilinear forms $a_e^{(k)}(\cdot,\cdot)$ and
$d_e^{(k)}(\cdot,\cdot)$ and the linear functional $\langle f_e^{(k)},\cdot\rangle$
analogous to the global objects
$a(\cdot,\cdot)$, $d(\cdot,\cdot)$, and $\langle f,\cdot\rangle$ by
\[
\begin{aligned}
	a_e^{(k)}(u_e^{(k)},v_e^{(k)}) &:= a^{(k)}(u_e^{(k)},v_e^{(k)}) + m^{(k)}(u_e^{(k)},v_e^{(k)}) + r^{(k)}(u_e^{(k)},v_e^{(k)}), \\
	d_e^{(k)}(u_e^{(k)},v_e^{(k)}) &:= a^{(k)}(u_e^{(k)},v_e^{(k)}) + r^{(k)}(u_e^{(k)},v_e^{(k)}), \\
	\langle f_e^{(k)},v_e^{(k)}\rangle & := \int_{\Omega^{(k)}} f v^{(k)} \mathrm dx,
\end{aligned}
\]
where we write with a slight abuse of notation
\[
\begin{aligned}
	a^{(k)}(u_e^{(k)},v_e^{(k)}) &:= \int_{\Omega^{(k)}} \nabla u^{(k)} \cdot \nabla v^{(k)} \; \textrm{d}x, \\
	m^{(k)}(u_e^{(k)},v_e^{(k)}) &:= \sum_{\ell \in \mathcal{N}_\Gamma(k)} \int_{\Gamma^{(k,\ell)}} \frac{1}{2} 
	\left( 
	\frac{\partial u^{(k)}}{\partial n_k}(v^{(k,\ell)} - v^{(k)}) + 
	\frac{\partial v^{(k)}}{\partial n_k}(u^{(k,\ell)} - u^{(k)})
	\right) \; \textrm{d}s, \\
	r^{(k)}(u_e^{(k)},v_e^{(k)}) &:= \sum_{\ell \in \mathcal{N}_\Gamma(k)} \int_{\Gamma^{(k,\ell)}} \frac{\delta p^2}{h_{k\ell}}   
	(u^{(k,\ell)} - u^{(k)})(v^{(k,\ell)} - v^{(k)}) \; \textrm{d}s.
\end{aligned}
\]
The bilinear form $d_e^{(k)}(\cdot,\cdot)$ induces the local dG-norm
$\| \cdot \|^2_{d_e^{(k)}} := d_e^{(k)}(\cdot, \cdot)$, the bilinear form
$a_e^{(k)}(\cdot,\cdot)$ the local energy norm
$\| \cdot \|^2_{a_e^{(k)}} := a_e^{(k)}(\cdot, \cdot)$.

By discretizing $a_e^{(k)}(\cdot,\cdot)$ and $\langle f_e^{(k)},\cdot \rangle$
using the basis $\Phi_e^{(k)}$, we obtain the local system
\begin{equation}\label{linsys:local}
			A^{(k)}\, \underline{u}_e^{(k)} = \underline{f}_e^{(k)},
\end{equation}
where $A^{(k)} = [a_e^{(k)}(\phi_{e,j}^{(k)},\phi_{e,i}^{(k)})]_{i,j=1}^{N_e^{(k)}}$
and
$\underline{f}_e^{(k)} = [ \langle f_e^{(k)},\phi_{e,i}^{(k)}\rangle]_{i=1}^{N_e^{(k)}}$. The vector $\underline{u}_e^{(k)}= [u_{e,i}^{(k)}]_{i=1}^{N_e^{(k)}}$
is the coefficient vector representing the function $u_e^{(k)} =\sum_{i=1}^{N_e^{(k)}}
u_{e,i}^{(k)} \phi_{e,i}^{(k)}$.

We subdivide the stiffness matrix and the load vector into blocks
\[
		A^{(k)}
		=
		\left(
		\begin{array}{cccc}
			A_{\mathrm I \mathrm I}^{(k)} & A_{\mathrm I\Gamma}^{(k)}\\
			A_{\Gamma \mathrm I}^{(k)} & A_{\Gamma\Gamma}^{(k)}\\
		\end{array}
		\right)
		\quad
		\mbox{and}
		\quad
		\underline{f}_e^{(k)}
		=
		\left(
		\begin{array}{c}
			\underline{f}_{\mathrm I}^{(k)}\\\underline{f}_{\Gamma}^{(k)}
		\end{array}
		\right),
\]
where the first row and the first column correspond to the first $N_I^{(k)}$ basis
functions, i.e., to those supported in the interior of the patch $\Omega^{(k)}$.
So, the remainder accounts for both the standard interfaces and the artificial
interfaces.

Next, we build the Schur complement system of \eqref{linsys:local} with respect to the interface degrees of freedom:
\begin{equation}\label{eq:s:sys}
			S^{(k)} \underline{w}_e^{(k)} = \underline{g}_e^{(k)},
\end{equation}
where
\begin{equation}\label{eq:s:def}
	S^{(k)} := A_{\Gamma \Gamma}^{(k)} - A_{\Gamma \mathrm I}^{(k)}(A_{\mathrm I\mathrm I}^{(k)})^{-1} A_{\mathrm I\Gamma}^{(k)}
	 \quad\mbox{and}\quad
	\underline{g}_e^{(k)} := \underline{f}_{\Gamma}^{(k)}
- A_{\Gamma \mathrm I}^{(k)} (A_{\mathrm I\mathrm I}^{(k)})^{-1}  \underline{f}_{\mathrm I}^{(k)}.
\end{equation}

Once $\underline{w}_e^{(k)}$ has been computed, we get the solution $\underline{u}_e^{(k)}$ of the system \eqref{linsys:local} by 
\begin{equation}\label{eq:inner:def}
	\underline{u}_e^{(k)}
		=
		\left(
		\begin{array}{c}
			\underline{u}_{\mathrm I}^{(k)}\\\underline{u}_{\Gamma}^{(k)}
		\end{array}
		\right)
		=
		\left(
		\begin{array}{c}
			(A_{\mathrm I\mathrm I}^{(k)})^{-1}
			A_{\mathrm I \Gamma}^{(k)}
			\underline{w}_e^{(k)}\\\underline{w}_e^{(k)}
		\end{array}
		\right).
\end{equation}
The collection of all linear systems~\eqref{eq:s:sys} for all patches yields a block diagonal linear system
\begin{equation}\label{eq:schursys}
	S \underline w = \underline g,
\end{equation}
where
\[
	S := \left(
		\begin{array}{ccc}
			S^{(1)} \\ & \ddots \\ && S^{(K)}
		\end{array}	
	\right),
	\quad 
	\underline w := \left(
		\begin{array}{c}
			\underline w_e^{(1)} \\  \vdots \\ \underline w_e^{(K)}
		\end{array}	
	\right)
	\quad \mbox{and}\quad
	\underline g := \left(
		\begin{array}{c}
			\underline g_e^{(1)} \\  \vdots \\ \underline g_e^{(K)}
		\end{array}	
	\right).
\]

In the following, we introduce the building blocks in order to rewrite the Schur
complement system in a variational setting.
First, we define function spaces on the skeleton:
\[
	W := \prod_{k=1}^K W_e^{(k)},
	\quad\mbox{where}\quad
	W_e^{(k)} := W^{(k)}
	\times \prod_{\ell \in \mathcal{N}_\Gamma(k)} W^{(k,\ell)},
\]
$W^{(k)}:=\{ v|_{\partial \Omega^{(k)}} \,:\, v \in V^{(k)}\}$
and $W^{(k,\ell)}:=V^{(k,\ell)}$. Analogously
to \eqref{def:representation}, an extended function $w_e^{(k)}\in W_e^{(k)}$ has the
form 
\[
	w_e^{(k)} = \left(w^{(k)}, \left(w^{(k,\ell)}\right)_{\ell\in \mathcal{N}_\Gamma(k)} \right),
\] 
where $w^{(k)} \in W^{(k)}$ and $w^{(k,\ell)}\in W^{(k,\ell)}$. A basis for
$W_e^{(k)}$ is given by
\[
\breve{\Phi}_e^{(k)}:=
\left(\phi_{e,N_I^{(k)}+i}^{(k)}\right)_{i=1}^{\breve{N}_e^{(k)}}
\quad\mbox{with}\quad
\breve{N}_e^{(k)}:=N_e^{(k)} - N_I^{(k)}.
\]
As above, the coefficient vector $\underline{w}_e^{(k)}=[w_{e, i}^{(k)}]_{i=1}^{\breve{N}_e^{(k)}}$ represents the function
\begin{equation}\label{eq:webasis}
		w_e^{(k)} = \sum_{i=1}^{\breve{N}_{e}^{(k)}} w_{e, i}^{(k)} \phi_{e,N_I^{(k)}+i}^{(k)}
\end{equation} with respect to that basis.

The Schur complements realize the discrete harmonic extension
$\mathcal{H}_A^{(k)}:W_e^{(k)}\rightarrow V^{(k)}_e$ with respect to
the energy norm $\|\cdot\|_{a_e^{(k)}}$,
which is defined as follows.
For any $w_e^{(k)} = (w^{(k)},(w^{(k,\ell)})_{\ell\in \mathcal{N}_\Gamma(k)})$,
we have
$
		\mathcal{H}_A^{(k)} w_e^{(k)} =
			\left(u^{(k)},(w^{(k,\ell)})_{\ell\in \mathcal{N}_\Gamma(k)}\right) 
$,
where $u^{(k)}$ is such that
\begin{equation}
	\label{discrete harmonic extension}
	\begin{aligned}
		u^{(k)}&=w^{(k)} \quad \mbox{on} \quad \partial \Omega^{(k)},\\
		a_e^{(k)}\left(
		\left(u^{(k)},(w^{(k,\ell)})_{\ell\in \mathcal{N}_\Gamma(k)}\right),
		\left(v^{(k)},0^{|\mathcal{N}_\Gamma(k)|}\right)
		\right)&=0 
		\quad \mbox{for all} \quad
		v^{(k)}\in V_{0}^{(k)},
	\end{aligned}
\end{equation}
where $V_{0}^{(k)} :=V^{(k)} \cap H^1_0(\Omega^{(k)})$.
Note that the standard discrete harmonic extension
$\mathcal{H}_h^{(k)}: W^{(k)} \rightarrow V^{(k)}$ is defined by
$\mathcal{H}_h^{(k)} w^{(k)}= u^{(k)}$, where $u^{(k)}$ is such that
\begin{equation}
  \label{standard discrete harmonic extension}
	\begin{aligned}
	u^{(k)} &= w^{(k)} \quad \mbox{on} \quad \partial \Omega^{(k)},\\
	a^{(k)}(u^{(k)},v^{(k)})&=0
	\quad\mbox{for all}\quad
	v^{(k)}\in V_{0}^{(k)}.
	\end{aligned}
\end{equation}

In variational form, problem~\eqref{eq:schursys} reads as follows: find
$w =(w_e^{(1)},\ldots,w_e^{(K)}) \in W$ such that 
\[
	\underbrace{\sum_{k=1}^K a_e^{(k)} ( \mathcal{H}_A^{(k)} w_e^{(k)}, \mathcal{H}_A^{(k)} q_e^{(k)} )}
	_{\displaystyle s(w,q):=}
	=
	\underbrace{
	\sum_{k=1}^K \langle f, \mathcal{H}_A^{(k)} q_e^{(k)} \rangle}
	_{\displaystyle \langle g,q\rangle:=}
	\quad\mbox{for all}\quad
	q = (q_e^{(1)},\ldots,q_e^{(K)}) \in W.
\]

The next step is the introduction of constraints that yield the continuity
of the solution. For any two patches $\Omega^{(k)}$ and $\Omega^{(\ell)}$
sharing an edge $\Gamma^{(k,\ell)}$ and any basis function 
$\phi_i^{(k)}$ supported on that edge, we introduce a constraint
\begin{equation}
\label{eq:def:b1}
			w^{(k)}_{e,i^*} - w^{(\ell)}_{e,j^*} = 0
			\quad\mbox{with}\quad
			i^*=i-N_I^{(k)}
			\quad\mbox{and}\quad
			j^*=\Upsilon_i^{(\ell,k)}-N_I^{(\ell)},
\end{equation}
where 
$\Upsilon_i^{(\ell,k)}$ is as in~\eqref{eq:def extended basis functions}
and $w^{(k)}_{e,i^*}$ and $w^{(k)}_{e,j^*}$
are the coefficients of the functions $w_e^{(k)}$ and $w_e^{(\ell)}$.
The index shifts by $N_I^{(k)}$ and $N_I^{(\ell)}$ account for the restriction to $W_e^{(k)}$ and $W_e^{(\ell)}$, cf.~\eqref{eq:webasis}.
This constraint ensures that the function values of the solution 
on the artificial interfaces coincide with the function values of the solution
on the corresponding standard interfaces.

If the corner values are chosen as primal degrees of freedom (Alg.~A and C),
we omit the constraints for the basis functions $\phi_i^{(k)}$ that are supported on
a corner of $\Omega^{(k)}$. A visualization of the constraints is given
in Figure~\ref{fig:ommiting}.

If only the edge averages are chosen as primal degrees of freedom (Alg.~B), we realize the constraints at the corners in a fully redundant way. This means
that besides the constraints~\eqref{eq:def:b1}, for every basis function
$\phi_i^{(k)}$ that is supported on a corner of $\Omega^{(k)}$, 
we additionally introduce constraints of the form
\begin{equation}
\label{eq:def:b2}
			w^{(\ell_1)}_{e,j_1^*} - w^{(\ell_2)}_{e,j_2^*} = 0
			\quad\mbox{with}\quad
			j_1^*=\Upsilon_i^{(\ell_1,k)}-N_I^{(\ell_1)}
			\quad\mbox{and}\quad
			j_2^*=\Upsilon_i^{(\ell_2,k)}-N_I^{(\ell_2)}
\end{equation}
where $\ell_1 < \ell_2$ such that the said corner lies
between the edges $\Gamma^{(k,\ell_1)}$ and $\Gamma^{(k,\ell_2)}$.
These additional constraints also ensure the continuity between the different
artificial interfaces.
A visualization of this case is given in Figure~\ref{fig:fully}. 

We collect the constraints of the form~\eqref{eq:def:b1} and \eqref{eq:def:b2} in
a matrix $B$ such that the constraints are equivalent to
\[
	B \, \underline w=0.
\]
This is done such that each row of the matrix corresponds to one constraint, i.e.,
each row has only two non-zero entries, one with value $1$ and one with value $-1$. 
The matrix $B$ can also be represented with patch-local contributions $B^{(1)}, \dots, B^{(K)}$ as $B = (B^{(1)} \cdots B^{(K)})$.
\begin{figure}
\begin{center}
\begin{minipage}{0.35\linewidth}
	\begin{tikzpicture}
	\fill[gray!20] (-0.2,0) -- (1.5,0) -- (1.5,1.7) -- (-0.2,1.7);
	\fill[gray!20] (-0.2,-1.5) -- (1.5,-1.5) -- (1.5,-3.2) -- (-0.2,-3.2);
	\fill[gray!20] (4.7,0) -- (3.0,0) -- (3.0,1.7) -- (4.7,1.7);
	\fill[gray!20] (4.7,-1.5) -- (3.0,-1.5) -- (3.0,-3.2) -- (4.7,-3.2);
	
	\draw (-0.2,0) -- (1.5,0) -- (1.5,1.7) node at (0.7,0.8) {$\Omega^{(1)}$}; 
	\draw (-0.2,-1.5) -- (1.5,-1.5) -- (1.5,-3.2) node at (0.7,-2.35) {$\Omega^{(2)}$}; 
	\draw (4.7,0) -- (3.0,0) -- (3.0,1.7) node at (3.9,0.8) {$\Omega^{(3)}$}; 
	\draw (4.7,-1.5) -- (3.0,-1.5) -- (3.0,-3.2) node at (3.9,-2.35) {$\Omega^{(4)}$}; 
	
	\draw (-0.2,-0.4) -- (1.5,-0.4) {};
	\draw (1.9,0) -- (1.9,1.7) {};
	
	\draw (3.0,-0.4) -- (4.7,-0.4) {};
	\draw (2.6,0) -- (2.6,1.7) {};
	
	\draw (-0.2,-1.1) -- (1.5,-1.1) {};
	\draw (1.9,-1.5) -- (1.9,-3.2) {};
	
	\draw (3.0,-1.1) -- (4.7,-1.1) {};
	\draw (2.6,-1.5) -- (2.6,-3.2) {};
	
	\draw (1.5,0) node[circle, fill, inner sep = 2pt] (A7) {};
	\draw (1.5,1.0) node[circle, fill, inner sep = 2pt] (A9) {};
	\draw (0.35,0.0) node[circle, fill, inner sep = 2pt] (A11) {};
	
	\draw (1.9,0) node[rectangle, fill, inner xsep = 2.5pt, inner ysep = 2.5pt] (A1) {};
	\draw (1.9,0.5) node[rectangle, fill, inner xsep = 2.5pt, inner ysep = 2.5pt] (A2) {};
	\draw (1.9,1.5) node[rectangle, fill, inner xsep = 2.5pt, inner ysep = 2.5pt] (A3) {};
	\draw (0.75,-0.4) node[diamond, fill, inner sep = 2pt] (A5) {};
	\draw (1.5,-0.4) node[diamond, fill, inner sep = 2pt] (A6) {};

	\draw (3.0,0) node[rectangle, fill, inner xsep = 2.5pt, inner ysep = 2.5pt] (B7) {};
	\draw (3.0,0.5) node[rectangle, fill, inner xsep = 2.5pt, inner ysep = 2.5pt] (B8) {};
	\draw (3.0,1.5) node[rectangle, fill, inner xsep = 2.5pt, inner ysep = 2.5pt] (B9) {};
	\draw (3.75,0) node[rectangle, fill, inner xsep = 2.5pt, inner ysep = 2.5pt] (B10) {};
	\draw (4.5,0) node[rectangle, fill, inner xsep = 2.5pt, inner ysep = 2.5pt] (B11) {};

	\draw (2.6,0) node[circle, fill, inner sep = 2pt] (B1) {};
	\draw (2.6,1.0) node[circle, fill, inner sep = 2pt] (B3) {};
	\draw (3.0,-0.4) node[star, fill, inner sep = 2pt] (B4) {};
	\draw (3.75,-0.4) node[star, fill, inner sep = 2pt] (B5) {};
	\draw (4.5,-0.4) node[star, fill, inner sep = 2pt] (B6) {};
	
	\draw (1.5,-1.5) node[diamond, fill, inner sep = 2pt] (C7) {};
	\draw (1.5,-2.2) node[diamond, fill, inner sep = 2pt] (C8) {};
	\draw (1.5,-3.0) node[diamond, fill, inner sep = 2pt] (C9) {};
	\draw (0.75,-1.5) node[diamond, fill, inner sep = 2pt] (C11) {};
	
	\draw (1.9,-1.5) node[star, fill, inner sep = 2pt] (C1) {};
	\draw (1.9,-2.0) node[star, fill, inner sep = 2pt] (C2) {};
	\draw (1.9,-2.75) node[star, fill, inner sep = 2pt] (C3) {};
	\draw (0.35,-1.1) node[circle, fill, inner sep = 2pt] (C5) {};
	\draw (1.5,-1.1) node[circle, fill, inner sep = 2pt] (C6) {};
	
	\draw (3.0,-1.5) node[star, fill, inner sep = 2pt] (D7) {};
	\draw (3.0,-2.0) node[star, fill, inner sep = 2pt] (D8) {};
	\draw (3.0,-2.75) node[star, fill, inner sep = 2pt] (D9) {};
	\draw (3.75,-1.5) node[star, fill, inner sep = 2pt] (D10) {};
	\draw (4.5,-1.5) node[star, fill, inner sep = 2pt] (D11) {};
	
	\draw (2.6,-1.5) node[diamond, fill, inner sep = 2pt] (D1) {};
	\draw (2.6,-2.2) node[diamond, fill, inner sep = 2pt] (D2) {};
	\draw (2.6,-3.0) node[diamond, fill, inner sep = 2pt] (D3) {};
	\draw (3.0,-1.1) node[shape=rectangle, fill, inner xsep = 2.5pt, inner ysep = 2.5pt] (D4) {};
	\draw (3.75,-1.1) node[rectangle, fill, inner xsep = 2.5pt, inner ysep = 2.5pt] (D5) {};
	\draw (4.5,-1.1) node[rectangle, fill, inner xsep = 2.5pt, inner ysep = 2.5pt] (D6) {};
	
	\draw[<->, line width = 1pt, latex-latex, bend left]
	(A2) edge (B8) (A3) edge (B9)
	(A5) edge (C11)
	(B6) edge (D11) (B5) edge (D10)
	(C2) edge (D8) (C3) edge (D9);
	
	\draw[<->, line width = 1pt, latex-latex, bend right]
	(A9) edge (B3)
	(A11) edge (C5)
	(B11) edge (D6) (B10) edge (D5)
	(C8) edge (D2) (C9) edge (D3);
	\end{tikzpicture}
	\captionof{figure}{Omitting vertices\\ (Alg.~A and C) \label{fig:ommiting}}
\end{minipage}\hspace{4em}
\begin{minipage}{0.35\linewidth}
	\begin{tikzpicture}
	\fill[gray!20] (-0.2,0) -- (1.5,0) -- (1.5,1.7) -- (-0.2,1.7);
	\fill[gray!20] (-0.2,-1.5) -- (1.5,-1.5) -- (1.5,-3.2) -- (-0.2,-3.2);
	\fill[gray!20] (4.7,0) -- (3.0,0) -- (3.0,1.7) -- (4.7,1.7);
	\fill[gray!20] (4.7,-1.5) -- (3.0,-1.5) -- (3.0,-3.2) -- (4.7,-3.2);
	
	\draw (-0.2,0) -- (1.5,0) -- (1.5,1.7) node at (0.7,0.8) {$\Omega^{(1)}$}; 
	\draw (-0.2,-1.5) -- (1.5,-1.5) -- (1.5,-3.2) node at (0.7,-2.35) {$\Omega^{(2)}$}; 
	\draw (4.7,0) -- (3.0,0) -- (3.0,1.7) node at (3.9,0.8) {$\Omega^{(3)}$}; 
	\draw (4.7,-1.5) -- (3.0,-1.5) -- (3.0,-3.2) node at (3.9,-2.35) {$\Omega^{(4)}$}; 
	
	\draw (-0.2,-0.4) -- (1.5,-0.4) {};
	\draw (1.9,0) -- (1.9,1.7) {};
	
	\draw (3.0,-0.4) -- (4.7,-0.4) {};
	\draw (2.6,0) -- (2.6,1.7) {};
	
	\draw (-0.2,-1.1) -- (1.5,-1.1) {};
	\draw (1.9,-1.5) -- (1.9,-3.2) {};
	
	\draw (3.0,-1.1) -- (4.7,-1.1) {};
	\draw (2.6,-1.5) -- (2.6,-3.2) {};
	
	\draw (1.5,0) node[circle, fill, inner sep = 2pt] (A7) {};
	\draw (1.5,1.0) node[circle, fill, inner sep = 2pt] (A9) {};
	\draw (0.35,0.0) node[circle, fill, inner sep = 2pt] (A11) {};
	
	\draw (1.9,0) node[rectangle, fill, inner xsep = 2.5pt, inner ysep = 2.5pt] (A1) {};
	\draw (1.9,0.5) node[rectangle, fill, inner xsep = 2.5pt, inner ysep = 2.5pt] (A2) {};
	\draw (1.9,1.5) node[rectangle, fill, inner xsep = 2.5pt, inner ysep = 2.5pt] (A3) {};
	\draw (0.75,-0.4) node[diamond, fill, inner sep = 2pt] (A5) {};
	\draw (1.5,-0.4) node[diamond, fill, inner sep = 2pt] (A6) {};

	\draw (3.0,0) node[rectangle, fill, inner xsep = 2.5pt, inner ysep = 2.5pt] (B7) {};
	\draw (3.0,0.5) node[rectangle, fill, inner xsep = 2.5pt, inner ysep = 2.5pt] (B8) {};
	\draw (3.0,1.5) node[rectangle, fill, inner xsep = 2.5pt, inner ysep = 2.5pt] (B9) {};
	\draw (3.75,0) node[rectangle, fill, inner xsep = 2.5pt, inner ysep = 2.5pt] (B10) {};
	\draw (4.5,0) node[rectangle, fill, inner xsep = 2.5pt, inner ysep = 2.5pt] (B11) {};
	
	\draw (2.6,0) node[circle, fill, inner sep = 2pt] (B1) {};
	\draw (2.6,1.0) node[circle, fill, inner sep = 2pt] (B3) {};
	\draw (3.0,-0.4) node[star, fill, inner sep = 2pt] (B4) {};
	\draw (3.75,-0.4) node[star, fill, inner sep = 2pt] (B5) {};
	\draw (4.5,-0.4) node[star, fill, inner sep = 2pt] (B6) {};
	
	\draw (1.5,-1.5) node[diamond, fill, inner sep = 2pt] (C7) {};
	\draw (1.5,-2.2) node[diamond, fill, inner sep = 2pt] (C8) {};
	\draw (1.5,-3.0) node[diamond, fill, inner sep = 2pt] (C9) {};
	\draw (0.75,-1.5) node[diamond, fill, inner sep = 2pt] (C11) {};
	
	\draw (1.9,-1.5) node[star, fill, inner sep = 2pt] (C1) {};
	\draw (1.9,-2.0) node[star, fill, inner sep = 2pt] (C2) {};
	\draw (1.9,-2.75) node[star, fill, inner sep = 2pt] (C3) {};
	\draw (0.35,-1.1) node[circle, fill, inner sep = 2pt] (C5) {};
	\draw (1.5,-1.1) node[circle, fill, inner sep = 2pt] (C6) {};
	
	\draw (3.0,-1.5) node[star, fill, inner sep = 2pt] (D7) {};
	\draw (3.0,-2.0) node[star, fill, inner sep = 2pt] (D8) {};
	\draw (3.0,-2.75) node[star, fill, inner sep = 2pt] (D9) {};
	\draw (3.75,-1.5) node[star, fill, inner sep = 2pt] (D10) {};
	\draw (4.5,-1.5) node[star, fill, inner sep = 2pt] (D11) {};
	
	\draw (2.6,-1.5) node[diamond, fill, inner sep = 2pt] (D1) {};
	\draw (2.6,-2.2) node[diamond, fill, inner sep = 2pt] (D2) {};
	\draw (2.6,-3.0) node[diamond, fill, inner sep = 2pt] (D3) {};
	\draw (3.0,-1.1) node[rectangle, fill, inner xsep = 2.5pt, inner ysep = 2.5pt] (D4) {};
	\draw (3.75,-1.1) node[rectangle, fill, inner xsep = 2.5pt, inner ysep = 2.5pt] (D5) {};
	\draw (4.5,-1.1) node[rectangle, fill, inner xsep = 2.5pt, inner ysep = 2.5pt] (D6) {};
	
	\draw[<->, line width = 1pt, latex-latex]
	(A6) edge (D1) (A1) edge (D4) (B4) edge (C1) (B1) edge (C6);
	
	\draw[<->, line width = 1pt, latex-latex, bend left]
	(A1) edge (B7) 
	(A2) edge (B8) 
	(A3) edge (B9)
	(A5) edge (C11) (A6) edge (C7)
	(B6) edge (D11) (B5) edge (D10) (B4) edge (D7)
	(C2) edge (D8) (C3) edge (D9) (C1) edge (D7);
	
	\draw[<->, line width = 1pt, latex-latex, bend right]
	(A7) edge (B1) 
	(A9) edge (B3)
	(A11) edge (C5) (A7) edge (C6)
	(B11) edge (D6) (B10) edge (D5) (B7) edge (D4)
	(C8) edge (D2) (C9) edge (D3) (C7) edge (D1);
	\end{tikzpicture}
	\captionof{figure}{Fully redundant\\ (Alg.~B)}
	\label{fig:fully}
\end{minipage}
\end{center}
\end{figure}

We are now able to write down a saddle point problem in matrix-vector form that is equivalent to problem~\eqref{discreteVarProb}: find $(\underline{w},\underline{\lambda})$ such that
\[
	\left(
		\begin{array}{cccc}
			S& B^\top \\
			B			
		\end{array}
	\right)
	\left(
		\begin{array}{c}
			\underline{w}  \\
			\underline{\lambda}  \\
		\end{array}
	\right)
	=
	\left(
		\begin{array}{c}
			\underline{g}  \\
			0  \\
		\end{array}
	\right).
\]
Note that the matrices $A^{(k)}$ and $S^{(k)}$ are non-singular only if the
corresponding patch contributes to the Dirichlet boundary $\partial \Omega$. 
Since in general at least one of the matrices $S^{(k)}$ is singular,
the matrix $S$ is usually singular. 

The remedy for this problem are primal degrees of freedom which form a global but small linear system. As mentioned in the introduction, we consider
three options: Alg.~A, B, and C. Each of them corresponds to a different
choice of spaces $\widetilde{W}$ and $\widetilde{W}_\Delta$. 

\begin{itemize}
	\item \emph{Alg.~A (Vertex values):} The space $\widetilde{W}$
		is the subspace of functions where the vertex values agree, i.e.,
		\[
				\widetilde{W} :=
				\left\{ w\in W
						\;:\; 
						\begin{array}{l}
							w^{(k)}(\textbf x) = w^{(\ell,k)}(\textbf x) \\
							\mbox{for all common vertices $\textbf x$ of all
							$\Omega^{(k)}$ and all $\Omega^{(\ell)}$}
						\end{array}
							\right\}.
		\]	
		The space $\widetilde{W}_{\Delta}$ satisfies these conditions homogeneously, i.e.,
		$\widetilde{W}_{\Delta}:= \prod_{k=1}^K \widetilde{W}_{\Delta}^{(k)}$ and	
		\[
				\widetilde{W}_{\Delta}^{(k)} :=
				\left\{  w_e^{(k)}\in W_e^{(k)}
						\;:\; 
						\begin{array}{l}
							w^{(k)}(\textbf x) = w^{(\ell,k)}(\textbf x) = 0 \\
							\mbox{for all common vertices $\textbf x$ of 
							$\Omega^{(k)}$ and all $\Omega^{(\ell)}$}
						\end{array}
							\right\}.
		\]
	\item \emph{Alg.~B (Edge averages):} The space $\widetilde{W}$
		is the subspace of functions where the averages of the function values over
		the edges agree, i.e.,
		\[
				\widetilde{W} :=
				\left\{ w\in W
						\;:\; \int_{\Gamma^{(k,\ell)}} w^{(k)}(x)\,\mathrm dx
						= \int_{\Gamma^{(k,\ell)}} w^{(\ell,k)}(x)\,\mathrm dx
							\mbox{ for all edges }\Gamma^{(k,\ell)} \right\}.
		\]	
		The space $\widetilde{W}_{\Delta}$ satisfies these conditions homogeneously, i.e.,
		$\widetilde{W}_{\Delta}:= \prod_{k=1}^K \widetilde{W}_{\Delta}^{(k)}$ and
		\[
			\widetilde{W}_{\Delta}^{(k)} := \left\{ w_e^{(k)}\in W_e^{(k)} \;:\; \int_{\Gamma^{(k,\ell)}} w^{(k)}(x)\,\mathrm dx = \int_{\Gamma^{(k,\ell)}} w^{(k,\ell)}(x)\,\mathrm dx = 0 \mbox{ for all } \Gamma^{(k,\ell)} \right\}.
		\] 
	\item \emph{Alg.~C (Vertex values and edge averages):} We combine the constraints from
		both cases. So, the spaces $\widetilde{W}$ and $\widetilde{W}_{\Delta}^{(k)}$ and
		$\widetilde{W}_{\Delta}$ are the intersections of the corresponding spaces obtained by
		Alg.~A and B.
\end{itemize}

The primal space $\widetilde{W}_\Pi$ is the subspace of energy minimizing functions, cf. \cite{Pechstein:2013a}. This means that $\widetilde{W}_\Pi$ is $S$-orthogonal
to $\widetilde{W}_\Delta$, i.e., 
\begin{equation}\label{eq:wtildedef}
	\widetilde{W}_{\Pi} := \{ w\in \widetilde{W} \;:\; 
	s( w, q ) = 0
	\mbox{ for all } q \in \widetilde{W}_{\Delta} \}.
\end{equation} 

Let $\psi^{(1)},\ldots,\psi^{(N_\Pi)}$ be a basis of $\widetilde{W}_{\Pi}$. 
In practice, one
usually chooses a nodal basis, where the vertex values and/or the edge averages form
the nodal values. The matrix $\Psi$ represents the basis in terms of the basis
for the space $W$, i.e.,
\begin{equation}\label{eq:psidef}
	\Psi = \left(
		\begin{array}{c}
			\underline\psi^{(1)}\\\vdots\\\underline\psi^{(N_\Pi)}
		\end{array}
	\right).
\end{equation}

A representation for the spaces $\widetilde{W}_\Delta^{(k)}$ can be obtained by full-rank matrices $C^{(k)}$:
\[
	C^{(k)}\,\underline{w}_e^{(k)} = 0
	\quad\Leftrightarrow\quad
	 w_e^{(k)} \in \widetilde{W}_\Delta^{(k)}.
\]
The block-diagonal collection of the matrices $C^{(k)}$ gives the matrix 
\[
	C := 
	\begin{pmatrix}
		C^{(1)} &  & \\
		& \ddots & \\
		& & C^{(K)}
	\end{pmatrix}.
\]

As in \cite{MandelDohrmannTezaur:2005a}, the following problem
is an equivalent reformulation of \eqref{discreteVarProb}: find
$(\underline w_\Delta, \underline \mu, \underline w_\Pi, \underline \lambda)$ such that
\[
	\left(
		\begin{array}{cccc}
			S & C^\top &                  & B^\top           \\
			C &        &                  &                  \\
			  &        & \Psi^\top S \Psi & \Psi^\top B^\top \\
			B &        & B \Psi           &                  \\
		\end{array}
	\right)
	\left(
		\begin{array}{c}
			\underline{w}_\Delta  \\
			\underline{\mu}  \\
			\underline{w}_\Pi  \\
			\underline{\lambda}  \\
		\end{array}
	\right)
	=
	\left(
		\begin{array}{c}
			\underline{g}  \\
			0  \\
			\Psi^\top \underline{g}  \\
			0  \\
		\end{array}
	\right).
\]
The solution for the original problem is obtained by
$\underline{w} = \underline{w}_\Delta + \Psi \underline{w}_\Pi$.
Following the DP approach, this system can be reduced to the subsequent problem for the Lagrange multipliers \underline{$\lambda$}:
\begin{equation}
	\label{IETIProblem}
	F \; \underline{\lambda} = \underline{d},
\end{equation}
where
\begin{align}
	\label{eq:IETI-matrix}
	F :=  \underbrace{\left(
			\begin{array}{ccc}
			B & 0 & B \Psi
			\end{array}
			\right)
			\left(
			\begin{array}{ccc}
			S & C^\top &  \\ C &   &   \\   &   & \Psi^\top S\Psi
			\end{array}
			\right)^{-1}
			\left(
			\begin{array}{c}
			 I  \\ 0 \\ \Psi^\top 
			\end{array}
			\right)}_{\displaystyle F_0:=}
			B^\top
			\quad\mbox{and}\quad
			\underline{d} :=  F_0\, \underline{g}.
\end{align}

The system~\eqref{IETIProblem} is solved with a preconditioned conjugate gradient (PCG) solver. In order to obtain an effective solver, we need a proper preconditioner. We use the scaled Dirichlet preconditioner $M_{\mathrm{sD}}$, defined by
\[
		M_{\mathrm{sD}} := B D^{-1} S  D^{-1} B^\top ,
\]
where $D \in \mathbb{R}^{N_{\Gamma}\times N_{\Gamma}}$ is a diagonal matrix
defined based on the principle of multiplicity scaling: Each
coefficient $d_{i,i}$ of $D$ is assigned the number of constraints for the corresponding degree of freedom plus one i.e., 
\[
	d_{i,i} := 1 +
		 \sum_{j=1}^{N_\Gamma} b_{j,i}^2,
\]
where $b_{i,j}$ are the coefficients of the matrix $B$.

The dG IETI-DP method requires the execution of the following steps. 
\begin{itemize}
	\item Compute the right-hand sides $\underline g_e^{(k)}$ according to \eqref{eq:s:def}.
	\item Compute the basis $\Psi$ for the primal space in accordance with~\eqref{eq:wtildedef} and~\eqref{eq:psidef}.
	\item Compute the primal Schur complement $S_\Pi := \Psi^{\top} S \Psi$.
	\item Solve \eqref{IETIProblem} for $\underline \lambda$ using a PCG solver.
	This requires the computation of the residual and the application
	of the preconditioner.
		
		The computation of the residual $\underline{\widehat{w}}:= F \underline{\lambda} - \underline{d}$ requires the following steps:
		\begin{enumerate}
			\item Compute $\underline{\widehat q}
			= ((\underline{\widehat q}_e^{(1)})^\top \cdots (\underline{\widehat q}_e^{(K)})^\top)^\top
				:= B^\top \underline \lambda - \underline g$.
			\item Solve the linear system
				\begin{equation}\label{eq:algo1}
						\left(
							\begin{array}{cc}
								S^{(k)} & (C^{(k)})^\top \\ C^{(k)}
							\end{array}
						\right)
						\left(
							\begin{array}{c}
								\underline{\widehat w}_\Delta^{(k)} \\ \underline{\widehat \mu}^{(k)}
							\end{array}
						\right)
						=
						\left(
							\begin{array}{c}
								\underline{\widehat q}_e^{(k)} \\ 0
							\end{array}
						\right)
				\end{equation}
				for all $k=1,\ldots,K$. For Alg.~A and C, one usually reduces the local systems by eliminating the degrees of freedom corresponding to the vertex values and the corresponding Lagrange multipliers. 
			\item Solve the (usually small) global linear system
				\begin{equation}\label{eq:algo2}
						S_\Pi \underline{\widehat w}_\Pi = \Psi^\top \underline{\widehat q}.
				\end{equation}
			\item Compute the residual
				\begin{equation}\label{eq:algo3}
						\underline{\widehat w} := B \left(
							\begin{array}{ccc}
								\underline{\widehat w}_\Delta^{(1)} \\ \vdots \\ \underline{\widehat w}_\Delta^{(K)}
							\end{array}
						\right)
							+ B \Psi  \underline {\widehat w}_\Pi.
				\end{equation}
		\end{enumerate}
		The application of the preconditioner to the residual, i.e., the computation of
		$M_{\mathrm{sD}}\,	\underline{\widehat w}$, only requires
		matrix-vector multiplications.
	\item The vector $\underline w$, representing the solution on the skeleton, is
	computed analogously to~\eqref{eq:algo1}, \eqref{eq:algo2} and~\eqref{eq:algo3}
	based on $q=((q^{(1)})^\top) \cdots (q^{(K)})^\top)^\top:=B^\top \underline{\lambda}$. Finally, the solution vector $\underline{u}$ is derived by~\eqref{eq:inner:def}.
\end{itemize}

\section{The condition number estimate}
\label{sec:4}
In this section, we prove the following condition number estimates.

\begin{theorem}\label{thrm:fin}
	Provided that the IETI-DP solver is set up as outlined in the previous
	sections, the condition number of the preconditioned system satisfies
	\[
		\kappa(M_{\mathrm{sD}} F) \le C\, p \; 
		\left(1+\log p+\max_{k=1,\ldots,K} \log\frac{H_{k}}{h_{k}}\right)^2
	\]
	for Alg.~A and C and
	\[
		\kappa(M_{\mathrm{sD}} F) \le C\,  
		\delta\, p
		\left(
				\max_{k=1,\ldots,K} \max_{\ell \in \mathcal N_{\Gamma}(k)}
				\frac{h_{k}}{h_{\ell}}
		\right)
		\left(1+\log p+\max_{k=1,\ldots,K} \log\frac{H_{k}}{h_{k}}\right)^2
	\]
	for Alg.~B,
	where in both cases the constant $C$ only depends on the constants from the
	Assumptions~\ref{ass:nabla}, \ref{ass:neighbors}, and~\ref{ass:quasiuniform}.
\end{theorem}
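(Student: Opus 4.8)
The plan is to follow the abstract framework of Mandel, Dohrmann and Tezaur~\cite{MandelDohrmannTezaur:2005a}, exactly as in the conforming analysis~\cite{SchneckenleitnerTakacs:2019}. First I check that the structural prerequisites of that framework hold for the present dG construction (the primal problem $\Psi^\top S\Psi$ is non-singular, and $\widetilde W=\widetilde W_\Pi\oplus\widetilde W_\Delta$ with $\widetilde W_\Pi$ being $s$-orthogonal to $\widetilde W_\Delta$ by construction). This gives $\lambda_{\min}(M_{\mathrm{sD}}F)\ge 1$ by the usual argument and $\lambda_{\max}(M_{\mathrm{sD}}F)\le\|P_D\|_S^2$, where $P_D=I-E_D$ is the scaled jump operator built from $B$ and the multiplicity scaling $D$, and $\|\cdot\|_S=s(\cdot,\cdot)^{1/2}$. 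Hence everything reduces to proving
\[
	s(P_D w,P_D w)\le\omega\,s(w,w)\qquad\text{for all }w\in\widetilde W,
\]
with $\omega$ equal to the claimed bound. As a preliminary simplification I replace the energy forms $a_e^{(k)}$ by the dG-forms $d_e^{(k)}$: by~\cite[Theorem~8]{Takacs:2019b} the forms $a_h$ and $d$, and therefore $a_e^{(k)}$ and $d_e^{(k)}$, are spectrally equivalent with constants independent of $h$, $p$, the penalty parameter $\delta\ge\delta^*$ and the spline smoothness, so the indefinite consistency terms $m^{(k)}$ drop out of the analysis.

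I then localize. Since $\mathcal H_A^{(k)}$ is $a_e^{(k)}$-energy minimizing with the artificial-interface components held fixed, and $a_e^{(k)}\simeq d_e^{(k)}$, I bound each term $\|\mathcal H_A^{(k)}(P_D w)_e^{(k)}\|_{a_e^{(k)}}^2$ from above by the $d_e^{(k)}$-norm of the non-minimizing competitor that uses the standard discrete harmonic extension $\mathcal H_h^{(k)}$ of the patch-boundary component $(P_D w)^{(k)}$ and keeps the artificial components. This produces
\[
	s(P_D w,P_D w)\lesssim\sum_{k=1}^K\Bigl(\bigl|\mathcal H_h^{(k)}(P_D w)^{(k)}\bigr|_{H^1(\Omega^{(k)})}^2
	+\sum_{\ell\in\mathcal N_\Gamma(k)}\tfrac{\delta p^2}{h_{k\ell}}\bigl\|(P_D w)^{(k,\ell)}-(P_D w)^{(k)}\bigr\|_{L_2(\Gamma^{(k,\ell)})}^2\Bigr),
\]
whereas $s(w,w)\simeq\sum_k(|u_w^{(k)}|_{H^1(\Omega^{(k)})}^2+\sum_\ell\tfrac{\delta p^2}{h_{k\ell}}\|w^{(k,\ell)}-w^{(k)}\|_{L_2(\Gamma^{(k,\ell)})}^2)$, with $u_w^{(k)}=\mathcal H_A^{(k)}w_e^{(k)}$ on $\Omega^{(k)}$, so that both a patch-$H^1$ part and a penalty part of $\|w\|_S^2$ are at our disposal. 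Writing $E_D$ out with the multiplicity weights, $(P_D w)^{(k)}$ on an edge $\Gamma^{(k,\ell)}$ is a scaled difference of the real trace $w^{(k)}|_{\Gamma^{(k,\ell)}}$ and the neighbour's artificial copy $w^{(\ell,k)}$, and at each vertex a corresponding difference (with heavier weights for Alg.~B because of the fully redundant corner constraints~\eqref{eq:def:b2}). The penalty part above is disposed of immediately: by the triangle inequality the weight $\delta p^2/h_{k\ell}$ cancels against the identical weight in $s(w,w)$, so this part is absorbed with an absolute constant for all three algorithms.

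All the remaining work lies in the patch-$H^1$ part, for which I decompose $(P_D w)^{(k)}$ into edge- and vertex-supported pieces by a partition of unity on $\partial\Omega^{(k)}$ and estimate the discrete harmonic extension of each piece with the $p$-explicit spline toolbox of~\cite{SchneckenleitnerTakacs:2019}: the equivalence $|\mathcal H_h^{(k)}v|_{H^1(\Omega^{(k)})}^2\simeq|v|_{H^{1/2}(\partial\Omega^{(k)})}^2$ up to a factor $p$, $p$-explicit discrete trace and inverse inequalities for univariate splines on the interfaces, and the edge and vertex (``logarithmic'') lemmas in which the customary $(1+\log(H/h))$ factor is replaced by $(1+\log p+\log(H_k/h_k))$. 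For Alg.~A and~C the primal vertex constraints force the edge differences $w^{(k)}|_{\Gamma^{(k,\ell)}}-w^{(\ell,k)}$ to vanish at the two endpoints of $\Gamma^{(k,\ell)}$, the multiplicity weights cancel pairwise as in the conforming situation, and the neighbour's artificial copy $w^{(\ell,k)}$ is controlled by splitting off $w^{(\ell)}|_{\Gamma^{(k,\ell)}}$ (bounded via $|u_w^{(\ell)}|_{H^1(\Omega^{(\ell)})}$) and the penalty-controlled remainder; this reproduces exactly the conforming bound $C\,p\,(1+\log p+\max_k\log(H_k/h_k))^2$.

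The genuinely new and hardest step is Alg.~B, where only the edge averages are primal. The differences $w^{(k)}|_{\Gamma^{(k,\ell)}}-w^{(\ell,k)}$ now satisfy only a vanishing-mean condition, so the vertex (point-value) contributions to $(P_D w)^{(k)}$ must be controlled entirely through the penalty terms. Since the penalty weight is $\delta p^2/h_{k\ell}$ and the splines involved (differences of a patch-$k$ and a patch-$\ell$ trace) live on the finer combined mesh of size $h_{k\ell}=\min\{h_k,h_\ell\}$, converting the available $L_2(\Gamma^{(k,\ell)})$-penalty bounds into the $H^{1/2}$-type quantities that the edge and vertex lemmas demand — by means of $p$-explicit inverse inequalities — and carefully accounting for the fully redundant corner constraints~\eqref{eq:def:b2} and their multiplicity scaling $D$ is what produces the additional factor $\delta\bigl(\max_{k}\max_{\ell\in\mathcal N_\Gamma(k)}h_k/h_\ell\bigr)$. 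Keeping the power of $p$ linear while propagating this mesh-ratio and penalty-parameter dependence correctly through the corners is the principal obstacle; once it is handled, the remaining estimates mirror those of~\cite{SchneckenleitnerTakacs:2019}, and I would organize them into a short sequence of lemmas (discrete extension, edge lemma, vertex lemma, scaling) now carrying explicit $p$- and $\delta$-dependence.
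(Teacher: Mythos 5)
Your overall strategy --- the Mandel--Dohrmann--Tezaur bound $\kappa(M_{\mathrm{sD}}F)\le \sup_{u\in\widetilde W}\|B_D^\top B\underline u\|_S^2/\|\underline u\|_S^2$, the spectral equivalence $a_e^{(k)}\eqsim d_e^{(k)}$ to remove the consistency terms, the reduction to a patchwise $H^1$ part plus a penalty part, and the reuse of the $p$-explicit toolbox from the conforming paper --- is exactly the route taken in Section~\ref{sec:4}, and your treatment of Alg.~A and~C is sound. The gap concerns Alg.~B, and it sits precisely in the step you dismiss as trivial: you claim that the penalty part of $\|B_D^\top B\underline u\|_S^2$ is ``disposed of immediately'' by the triangle inequality, with an absolute constant \emph{for all three algorithms}. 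That is false for Alg.~B. Because of the fully redundant corner constraints~\eqref{eq:def:b2}, the jump $w^{(k)}-w^{(k,\ell)}$ produced by $B_D^\top B$ contains, besides the edge differences $u^{(k)}-u^{(k,\ell)}$ and $u^{(\ell)}-u^{(\ell,k)}$, corner correction terms of the form $\theta^{(k,\ell,i)}\bigl(u^{(k)}(\mathbf x^{(k,\ell,i)})-u^{(j,k)}(\mathbf x^{(k,\ell,i)})\bigr)$ with $j$ ranging over the edge neighbours meeting at that vertex (cf.\ Lemma~\ref{lem:bbt} and~\eqref{eq:def:delta}); these point-value jumps do not occur in the penalty terms of $s(u,u)$, so no triangle inequality cancels them. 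They must be estimated through a vertex-jump estimate (Lemma~\ref{lem:corner estimate}), and since $\|\theta^{(k,\ell,i)}\|_{L_2(\Gamma^{(k,\ell)})}^2\eqsim h_k/p$, multiplying by the penalty weight $\delta p^2/h_{k\ell}$ produces terms of size $\delta p\,(h_k/h_{k\ell})\,\Delta^{(k,\ell,i)}$ --- this is exactly where the factor $\Sigma=\delta p\max_{k}\max_{\ell\in\mathcal N_\Gamma(k)}h_k/h_\ell$ of~\eqref{eq:sigmadef} and Lemma~\ref{lem:second} is generated.

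Consequently, your later attribution of the mesh-ratio factor to the $H^1$/discrete-extension part (``converting the $L_2$-penalty bounds into $H^{1/2}$-type quantities by inverse inequalities'') is misplaced and, as written, contradicts your earlier claim that the penalty part costs only an absolute constant. In the correct accounting, the corner terms entering the $H^{1/2}$- and $L_\infty^0$-estimates of $w^{(k)}$ carry no $h$-weight; after applying the corner estimate they are dominated by $p\Lambda$ times the penalty terms of $\|u\|_d^2$ (using $\delta\gtrsim 1$), so the $H^1$ part contributes only $p\Lambda^2$ for all three algorithms and no mesh ratio at all, while the entire factor $\delta p\max h_k/h_\ell$ arises from the penalty sum you discarded. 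Following your plan literally would therefore either miss the corner contributions altogether (yielding an unjustified Alg.~B bound without the ratio) or leave the claimed factor unproven. To repair the argument you need the explicit representation of $B_D^\top B\underline u$ for Alg.~B including the corner corrections and the scaling $d_{i,i}\eqsim 1$, the $L_2$, $H^{1/2}$ and $L_\infty$ scalings of the nodal spline $\theta^{(k,\ell,i)}$, and an estimate of $\Delta^{(k,\ell,i)}$ in terms of $\Lambda\,|\mathcal H_h^{(k)}u^{(k)}|_{H^1}^2+\Lambda\,|\mathcal H_h^{(\ell)}u^{(\ell)}|_{H^1}^2+\tfrac{p^2}{h_{k\ell}}\|u^{(\ell)}-u^{(\ell,k)}\|_{L_2(\Gamma^{(k,\ell)})}^2$, applied \emph{inside} the penalty sum, as in Lemmas~\ref{lem:corner estimate} and~\ref{lem:second}.
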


If some Lagrange multipliers are redundant to each other or to
primal constraints, the matrices $M_{\mathrm{sD}}$ and $F$ are singular. Standard
iteration schemes live in the corresponding factor space. Here and in what
follows, the condition number is estimated for the restriction 
to the factor space, cf.~\cite[Remark~23]{MandelDohrmannTezaur:2005a}.

\begin{notation}
	We use the notation $a \lesssim b$ if there is a constant
	$c>0$ that only depends on the constants from the
	Assumptions~\ref{ass:nabla}, \ref{ass:neighbors}, and~\ref{ass:quasiuniform}
	such that $a\le cb$. Moreover, we write $a\eqsim b$ if $a\lesssim b\lesssim a$.
	
	When it is clear from the context, we do not denote the restriction of
	a function to an interface explicitly, so we write for example
	$\|u^{(k)}\|_{L_2(\Gamma^{(k,\ell)})}$
	instead of
	$\|u^{(k)}|_{\Gamma^{(k,\ell)}}\|_{L_2(\Gamma^{(k,\ell)})}$.
\end{notation}

Before we dive into the condition number estimate, we observe that the bilinear forms $a_e^{(k)}(\cdot, \cdot)$ and 
$d_e^{(k)}(\cdot, \cdot)$ are patchwise equivalent.
\begin{lemma}\label{lem:patchwise equ}
	For every patch $\Omega^{(k)}$,
	\[
		a_e^{(k)}(u_e^{(k)}, u_e^{(k)}) \lesssim d_e^{(k)}(u_e^{(k)}, u_e^{(k)}) \lesssim a_e^{(k)}(u_e^{(k)}, u_e^{(k)}) 
	\]
	holds for all $u_e^{(k)} \in V_e^{(k)}$.
\end{lemma}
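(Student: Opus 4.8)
The plan is to show the two inequalities separately. Since $a_e^{(k)}(u_e^{(k)},u_e^{(k)}) = d_e^{(k)}(u_e^{(k)},u_e^{(k)}) + m^{(k)}(u_e^{(k)},u_e^{(k)})$, everything comes down to estimating the cross term $m^{(k)}(u_e^{(k)},u_e^{(k)})$ against $d_e^{(k)}(u_e^{(k)},u_e^{(k)}) = a^{(k)}(u_e^{(k)},u_e^{(k)}) + r^{(k)}(u_e^{(k)},u_e^{(k)})$. Writing $j^{(k,\ell)} := u^{(k,\ell)} - u^{(k)}$ for the jump across the edge $\Gamma^{(k,\ell)}$, we have
\[
	m^{(k)}(u_e^{(k)},u_e^{(k)})
	= \sum_{\ell \in \mathcal{N}_\Gamma(k)} \int_{\Gamma^{(k,\ell)}}
		\frac{\partial u^{(k)}}{\partial n_k}\, j^{(k,\ell)} \; \mathrm ds,
\]
so I would bound each summand using Cauchy--Schwarz on $\Gamma^{(k,\ell)}$ as
\[
	\left| \int_{\Gamma^{(k,\ell)}} \frac{\partial u^{(k)}}{\partial n_k}\, j^{(k,\ell)} \; \mathrm ds \right|
	\le \Big\| \tfrac{\partial u^{(k)}}{\partial n_k} \Big\|_{L_2(\Gamma^{(k,\ell)})}
		\, \| j^{(k,\ell)} \|_{L_2(\Gamma^{(k,\ell)})}.
\]
The second factor is controlled directly by the penalty term: $\|j^{(k,\ell)}\|_{L_2(\Gamma^{(k,\ell)})}^2 \le \frac{h_{k\ell}}{\delta p^2}\, r^{(k)}$-contribution of that edge. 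For the first factor I would invoke a discrete trace / inverse inequality for splines on the patch $\Omega^{(k)}$: the normal derivative of $u^{(k)}$ restricted to an edge satisfies
\[
	\Big\| \tfrac{\partial u^{(k)}}{\partial n_k} \Big\|_{L_2(\Gamma^{(k,\ell)})}^2
	\lesssim \frac{p^2}{h_{k}}\, | u^{(k)} |_{H^1(\Omega^{(k)})}^2 ,
\]
which is the $p$-explicit polynomial inverse estimate (combined with the trace inequality) transported to the physical patch via Assumption~\ref{ass:nabla}; the constant $C_1$ enters here exactly as it does in the coercivity proof referenced as \cite[Theorem~8]{Takacs:2019b}. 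Multiplying the two bounds, the factors $p^2$ and $h_{k\ell} \le h_k$ (note $h_{k\ell} = \min\{h_k,h_\ell\} \le h_k$) cancel the $p^2/h_k$ and $h_{k\ell}/(\delta p^2)$, leaving
\[
	\left| \int_{\Gamma^{(k,\ell)}} \tfrac{\partial u^{(k)}}{\partial n_k}\, j^{(k,\ell)} \; \mathrm ds \right|
	\lesssim \tfrac{1}{\sqrt\delta}\, | u^{(k)} |_{H^1(\Omega^{(k)})}
		\, \Big( \tfrac{\delta p^2}{h_{k\ell}} \| j^{(k,\ell)} \|_{L_2(\Gamma^{(k,\ell)})}^2 \Big)^{1/2}.
\]
Summing over the (uniformly bounded, by Assumption~\ref{ass:neighbors}) number of neighbors and applying Young's inequality gives $|m^{(k)}(u_e^{(k)},u_e^{(k)})| \le \tfrac12 a^{(k)} + C\, r^{(k)}$ for a constant $C$ depending only on the assumption constants, hence $m^{(k)} \le \max\{\tfrac12, C\}\, d_e^{(k)}$, which immediately yields $a_e^{(k)} \lesssim d_e^{(k)}$.

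For the reverse inequality $d_e^{(k)} \lesssim a_e^{(k)}$: from $d_e^{(k)} = a_e^{(k)} - m^{(k)}$ and the same bound $|m^{(k)}| \le \tfrac12 a^{(k)} + C\, r^{(k)} \le \max\{\tfrac12,C\}\, d_e^{(k)}$, we get $d_e^{(k)} \le a_e^{(k)} + \max\{\tfrac12,C\}\, d_e^{(k)}$. This is only useful if the constant on $d_e^{(k)}$ is strictly below $1$, which is not guaranteed by the crude estimate above. The clean fix — and the step I expect to be the main obstacle — is to recognize that this is precisely the coercivity statement for $a_e^{(k)}(\cdot,\cdot)$ restricted to a single patch, which holds provided $\delta \ge \delta^*$ with $\delta^*$ as in \cite[Theorem~8]{Takacs:2019b}: for $\delta$ large enough one sharpens the trace-inverse constant so that $|m^{(k)}| \le (1-c_0)\, d_e^{(k)}$ for some fixed $c_0 \in (0,1)$ depending only on the assumption constants, whence $d_e^{(k)} \le c_0^{-1}\, a_e^{(k)}$. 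Since the whole paper already stipulates $\delta \ge \delta^*$, this is admissible. The delicate point is that the equivalence constants, while independent of $h_k$, $H_k$, $p$, and the spline smoothness, are allowed to depend on $\delta$ through $\delta^{-1}$ (in the direction $a_e^{(k)} \lesssim d_e^{(k)}$, the term $C\, r^{(k)}$ above has $C$ independent of $\delta$, so actually $a_e^{(k)} \lesssim d_e^{(k)}$ holds uniformly in $\delta \ge \delta^*$); one should state the lemma's $\lesssim$ as uniform in all these parameters for $\delta$ in the admissible range, consistent with the Notation block. I would therefore organize the proof as: (i) state the $p$-explicit trace-inverse inequality on a patch, citing \cite{Takacs:2019b} for the $\delta^*$ construction; (ii) derive the two-sided bound on $m^{(k)}$; (iii) read off both inequalities.
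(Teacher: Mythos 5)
Your proposal is correct and follows essentially the same route as the paper: the paper's proof is a one-line reference to the argument of \cite[Theorem~8]{Takacs:2019b}, which is precisely the SIPG boundedness/coercivity argument you spell out (Cauchy--Schwarz on the edge terms, the $p$-explicit discrete trace/inverse inequality transported via Assumption~\ref{ass:nabla}, Young's inequality, and $\delta\ge\delta^*$ to absorb the consistency term for the coercivity direction). Your remarks on the $\delta$-dependence and on $h_{k\ell}\le h_k$ are consistent with how the penalty parameter is stipulated in Section~\ref{sec:2}, so no gap remains.
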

\begin{proof}
	Completely analogous to the proof of \cite[Theorem 8]{Takacs:2019b}.
\end{proof}

Analogous to~\cite[Lemma~4.16]{SchneckenleitnerTakacs:2019},
the following lemma allows to estimate the action of the matrix $B^\top_D B$.
\begin{lemma}\label{lem:bbt}
	Let $u=(u_e^{(1)},\cdots,u_e^{(K)})=((u^{(1)},(u^{(1,\ell)})_{\ell\in \mathcal N_\Gamma(1)}),\ldots)\in \widetilde W$ with coefficient vector $\underline u$
	and let $w=(w_e^{(1)},\cdots,w_e^{(K)})=((w^{(1)},(w^{(1,\ell)})_{\ell\in \mathcal N_\Gamma(1)}),\ldots)\in \widetilde W$ with coefficient vector $\underline w$
	be such that $\underline w = B_D^\top B \underline u$.
	Then, we have for each patch $\Omega^{(k)}$ and each edge $\Gamma^{(k,\ell)}$
	connecting the vertices ${\normalfont \textbf x}^{(k,\ell,1)}$ and ${\normalfont \textbf x}^{(k,\ell,2)}$
	\[
		\begin{aligned}
			& \|w^{(k)} - w^{(k,\ell)} \|_{L_{2}(\Gamma^{(k,\ell)})}^2 \\
			& \qquad \lesssim
			    \|u^{(k)} - u^{(k,\ell)}\|_{L^{2}(\Gamma^{(k,\ell)})}^2
			  + \|u^{(\ell)}- u^{(\ell,k)} \|_{L^{2}(\Gamma^{(k,\ell)})}^2 
			  + \sum_{i=1}^2 
			  \left(
					  	\frac{h_{k}}{p} \Delta^{(k,\ell,i)} 
			  			+ \frac{h_{\ell}}{p} \Delta^{(\ell,k,i)}
			  \right),\\
		& |w^{(k)}|_{H^{1/2}(\Gamma^{(k,\ell)})}^2  \lesssim
		|u^{(k)}|_{H^{1/2}(\Gamma^{(k,\ell)})}^2 +
		|u^{(\ell,k)}|_{H^{1/2}(\Gamma^{(k,\ell)})}^2 + \sum_{i=1}^2 \Delta^{(k,\ell,i)},
	\end{aligned}
	\]
	and 
	\[
		|w^{(k)}|_{L_\infty^0(\Gamma^{(k,\ell)})}^2
		\lesssim
		|u^{(k)}|_{L_\infty^0(\Gamma^{(k,\ell)})}^2 +
		|u^{(\ell,k)}|_{L_\infty^0(\Gamma^{(k,\ell)})}^2  
		+ \sum_{i=1}^2 \Delta^{(k,\ell,i)},
	\]
	where
	\begin{equation}\label{eq:def:delta}
		\Delta^{(k,\ell, i)}:=
		\begin{cases}
		0	&\mbox{for Alg.~A and C}\\
			\sum_{j \in \mathcal{P}(\normalfont \textbf x^{(k,\ell,i)}) \cap \mathcal{N}_\Gamma(k)}
			|u^{(k)}({\normalfont \textbf x}^{(k,\ell,i)})-u^{(j,k)}({\normalfont \textbf x}^{(k,\ell,i)})|^2 &\mbox{for Alg.~B}.
	\end{cases}
	\end{equation}
\end{lemma}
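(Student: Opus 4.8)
The plan is to reduce the statement to an explicit coefficient-wise description of how $B_D^\top B=D^{-1}B^\top B$ acts on the coefficient vector $\underline u$, and then to re-interpret the resulting identities in the spaces $W^{(k)}$ and $W^{(k,\ell)}$. Since each row of $B$ has exactly one entry $+1$ and one entry $-1$, $B\,\underline u$ lists the jumps across the constraints~\eqref{eq:def:b1} and (for Alg.~B)~\eqref{eq:def:b2}, $B^\top B\,\underline u$ collects for every skeleton degree of freedom the signed sum of the jumps in which it occurs, and $D^{-1}$ divides this sum by one plus the number of those occurrences. The key structural fact is that a degree of freedom belonging to a basis function supported in the \emph{interior} of an edge $\Gamma^{(k,\ell)}$ occurs in exactly one constraint, so the corresponding diagonal entry of $D$ equals $2$; only the degrees of freedom sitting at the endpoints $\textbf{x}^{(k,\ell,1)}$, $\textbf{x}^{(k,\ell,2)}$ of the edge carry larger multiplicities, and by Assumption~\ref{ass:neighbors} those multiplicities are bounded by a constant.

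First I would treat the interior of the edge. Chasing the index maps $\Upsilon_i^{(\cdot,\cdot)}$ from~\eqref{eq:def extended basis functions}, one sees that the constraint generated by a non-endpoint basis function $\phi_i^{(k)}$ on $\Gamma^{(k,\ell)}$ couples the coefficient of $\phi_i^{(k)}$ in $w^{(k)}$ with that of the artificial copy of $\phi_i^{(k)}$ in $w^{(\ell,k)}$, and symmetrically for $\phi_i^{(\ell)}$; carrying the signs and the factor $\tfrac12$ through, this yields, away from the two endpoints,
\[
	w^{(k)} = \tfrac12\bigl(u^{(k)}-u^{(\ell,k)}\bigr)
	\qquad\text{and}\qquad
	w^{(k,\ell)} = \tfrac12\bigl(u^{(k,\ell)}-u^{(\ell)}\bigr)
\]
on $\Gamma^{(k,\ell)}$, and therefore $w^{(k)}-w^{(k,\ell)}=\tfrac12\bigl(u^{(k)}-u^{(k,\ell)}\bigr)+\tfrac12\bigl(u^{(\ell)}-u^{(\ell,k)}\bigr)$ there, after the elementary rearrangement $(a-d)+(b-c)=(a-c)+(b-d)$ that turns the seemingly mixed combination into the ``clean'' one. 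With these identities the interior contributions to all three estimates follow at once from the triangle inequality (applied to $|\cdot|_{H^{1/2}}$ and $|\cdot|_{L_\infty^0}$ as well), and reproduce exactly the first terms on the right-hand sides.

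Next I would handle the two endpoints. For Alg.~A and C the endpoint degrees of freedom carry no constraints, so their $w$-coefficients vanish, and since $u\in\widetilde W$ forces $u^{(k)}(\textbf{x})=u^{(\ell,k)}(\textbf{x})$ (and $u^{(\ell)}(\textbf{x})=u^{(k,\ell)}(\textbf{x})$) at the common vertices, the two interior identities above in fact extend to all of $\Gamma^{(k,\ell)}$; the corrections vanish and the estimates hold with $\Delta^{(k,\ell,i)}=0$. For Alg.~B the endpoint coefficient of $w^{(k)}$ equals an $O(1)$-weighted signed sum, of uniformly bounded length, of point jumps $u^{(k)}(\textbf{x}^{(k,\ell,i)})-u^{(j,k)}(\textbf{x}^{(k,\ell,i)})$ with $j\in\mathcal P(\textbf{x}^{(k,\ell,i)})\cap\mathcal N_\Gamma(k)$, and likewise the endpoint coefficient of $w^{(k,\ell)}$ is such a sum with $k$ replaced by $\ell$, where the fully redundant constraints~\eqref{eq:def:b2} together with the triangle inequality allow one to absorb the jumps between pairs of artificial copies into $\Delta^{(k,\ell,i)}$, resp.\ $\Delta^{(\ell,k,i)}$; here Assumption~\ref{ass:neighbors} keeps both the number of terms and the scaling weights uniformly bounded. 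Writing $w^{(k)}$ on $\Gamma^{(k,\ell)}$ as its interior part plus a correction $\sum_{i=1,2} c_i\,\phi_i^{\mathrm{cor}}$, where $\phi_i^{\mathrm{cor}}$ is the unique edge basis function that does not vanish at $\textbf{x}^{(k,\ell,i)}$ and $|c_i|^2\lesssim\Delta^{(k,\ell,i)}$, and doing the analogous splitting for $w^{(k,\ell)}$, the three claimed bounds follow by inserting the spline estimates $\|\phi_i^{\mathrm{cor}}\|_{L_2(\Gamma^{(k,\ell)})}^2\eqsim h_k/p$, $|\phi_i^{\mathrm{cor}}|_{H^{1/2}(\Gamma^{(k,\ell)})}^2\lesssim 1$ and $|\phi_i^{\mathrm{cor}}|_{L_\infty^0(\Gamma^{(k,\ell)})}^2\lesssim 1$, which rest on Assumptions~\ref{ass:nabla} and~\ref{ass:quasiuniform} and on the explicit shape of the first B-spline of a $p$-open knot vector, along the lines of the corresponding estimates in~\cite{SchneckenleitnerTakacs:2019}.

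The main obstacle is twofold. The conceptual difficulty is the bookkeeping: one must determine, for each skeleton degree of freedom, precisely which rows of $B$ it belongs to and with which sign, which forces careful tracking of the index maps $\Upsilon_i^{(\cdot,\cdot)}$, of the distinction between a patch's own trace space and the artificial copies held by its neighbours, and of the extra constraints~\eqref{eq:def:b2} in Alg.~B, so that the naively mixed expression $\tfrac12(u^{(k)}-u^{(\ell,k)})+\tfrac12(u^{(\ell)}-u^{(k,\ell)})$ is recognised as the desired $\tfrac12(u^{(k)}-u^{(k,\ell)})+\tfrac12(u^{(\ell)}-u^{(\ell,k)})$. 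The technical difficulty is the factor $h_k/p$ in the $L_2$-bound: it is the only place where $p$ enters, and it comes precisely from $\|\phi_i^{\mathrm{cor}}\|_{L_2(\Gamma^{(k,\ell)})}^2\eqsim h_k/p$ for the degree-$p$ B-spline, so the argument must exploit the sharp scaling of that norm rather than the crude bound $\lesssim h_k$.
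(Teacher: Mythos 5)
Your proposal is correct and follows essentially the same route as the paper's proof: the identities $w^{(k)}=\tfrac12\bigl(u^{(k)}-u^{(\ell,k)}\bigr)$ and $w^{(k,\ell)}=\tfrac12\bigl(u^{(k,\ell)}-u^{(\ell)}\bigr)$ up to corner corrections, the vanishing of those corrections for Alg.~A and~C via the primal vertex constraints, and for Alg.~B the absorption of the corner jumps into $\Delta^{(k,\ell,i)}$, $\Delta^{(\ell,k,i)}$ together with the spline estimates $\|\theta\|_{L_2(\Gamma^{(k,\ell)})}^2\eqsim h/p$, $|\theta|_{H^{1/2}(\Gamma^{(k,\ell)})}^2\lesssim 1$, $\|\theta\|_{L_\infty(\Gamma^{(k,\ell)})}=1$. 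The only difference is presentational: you obtain these formulas by coefficient-wise bookkeeping of $D^{-1}B^\top B$, whereas the paper states them directly.
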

\begin{proof}
	We start by recalling how the scaling matrix $D$ looks like. Remember $D$ is a
	diagonal matrix. We denote the coefficients by $d_{i,i}$ for $i=1,\ldots,N_\Gamma$.
	The entries $d_{i,i}$ are defined to be one plus
	the number of Lagrange multipliers of the corresponding degree of freedom $i$.
	Note that we have $d_{i,i}=2$ unless the corresponding degree of freedom is
	a corner degree of freedom.
	In that case, $d_{i,i}$ takes a value, which is bounded from below
	by $1$ and bounded from above due to Assumption~\ref{ass:neighbors}. Thus,
	$d_{i,i}\eqsim 1$ in any case.
		
	We will start with Alg.~A and C:
	Simple calculations reveal for Alg.~A and C that
	\[\begin{aligned}
	w^{(k)}|_{\Gamma^{(k,\ell)}} &= \frac{1}{2}
	\left(
	u^{(k)}|_{\Gamma^{(k,\ell)}} -
	u^{(\ell,k)}  
	- \sum_{i=1}^2 \theta^{(k,\ell,i)}
	\big( u^{(k)}(\textbf x^{(k,\ell,i)}) - u^{(\ell,k)}(\textbf x^{(k,\ell,i)})\big) \right), \\ 
	w^{(k,\ell)} &=
	\frac{1}{2} 
	\left( 
	u^{(k, \ell)} - 
	u^{(\ell)}|_{\Gamma^{(k,\ell)}}
	- \sum_{i=1}^2 \theta^{(\ell,k,i)}
	\big( u^{(k,\ell)}(\textbf x^{(k,\ell,i)}) - u^{(\ell)}(\textbf x^{(k,\ell,i)})\big) 
	\right).
	\end{aligned}\]
	where $\theta^{(k,\ell,i)}$ is the basis function in $\Phi^{(k)}$
	such that $\theta^{(k,\ell,i)}(\textbf x^{(k,\ell,i)})=1$. 
	$\theta^{(\ell, k, i)}$ is defined analogously.
	Since $u$ satisfies the primal constraints, we have
	$u^{(k)}(\textbf x^{(k,\ell,i)}) = u^{(\ell,k)}(\textbf x^{(k,\ell,i)})$ and
	$u^{(k,\ell)}(\textbf x^{(k,\ell,i)}) = u^{(\ell)}(\textbf x^{(k,\ell,i)})$
	and thus
	\[
	\begin{aligned}
	w^{(k)}|_{\Gamma^{(k,\ell)}} &= \frac{1}{2}
	(
	u^{(k)}|_{\Gamma^{(k,\ell)}} - u^{(\ell,k)}), \qquad
	w^{(k,\ell)} &= \frac{1}{2} 
	(u^{(k,\ell)} - u^{(\ell)}|_{\Gamma^{(k,\ell)})}).
	\end{aligned}
	\] \\
	Therefore, we obtain by using the triangle inequality 
	\[
	\begin{aligned}
		\|w^{(k)}-w^{(k,\ell)}\|_{L_{2}(\Gamma^{(k,\ell)})}^2
		&\lesssim 
		\|u^{(k)} - u^{(k,\ell)}\|_{L_{2}(\Gamma^{(k,\ell)})}^2 + 
		\| u^{(\ell)} - u^{(\ell,k)}\|_{L_{2}(\Gamma^{(k,\ell)})}^2 
	\end{aligned}
	\]
	for the $L_2$-norm. For the $H^{1/2}$-seminorm, we get using the triangle inequality that
	\[
		|w^{(k)}|_{H^{1/2}(\Gamma^{(k,\ell)})}^2
		\lesssim  |u^{(k)}|_{H^{1/2}(\Gamma^{(k,\ell)})}^2 + |u^{(\ell,k)}|_{H^{1/2}(\Gamma^{(k,\ell)})}^2 .
	\]
	Analogously, we obtain using the triangle inequality 
	\[
	 |w^{(k)}|_{L_\infty^0(\Gamma^{(k,\ell)})}^2 
	\lesssim
	|u^{(k)}|_{L_\infty^0(\Gamma^{(k,\ell)})}^2 +
	|u^{(\ell,k)}|_{L_\infty^0(\Gamma^{(k,\ell)})}^2,
	\] 
	which finishes the proof for the Alg.~A and C.	
	
	For Alg.~B we have that $w^{(k)}$ on $\Gamma^{(k,\ell)}$ can be expanded as 	\[
	\begin{aligned}
	w^{(k)}|_{\Gamma^{(k,\ell)}} &= 
	\frac{1}{2} \left(
	u^{(k)}|_{\Gamma^{(k,\ell)}} -
	u^{(\ell,k)}  
	- \sum_{i=1}^2 \theta^{(k,\ell,i)}
	\big( u^{(k)}(\textbf x^{(k,\ell,i)}) - u^{(\ell,k)}(\textbf x^{(k,\ell,i)})\big) \right) \\ & \quad 
	+ \sum_{i=1}^{2}\frac{1}{d_{n_i,n_i}}\theta^{(k,\ell,i)}\sum_{j\in \mathcal{P}(\textbf x^{(k,\ell,i)}) \cap \mathcal{N}_\Gamma(k)}
	\left(
	u^{(k)}(\mathbf{x}^{(k,\ell,i)}) - u^{(j,k)}(\mathbf{x}^{(k,\ell,i)})
	\right)
	, \\ 
	w^{(k,\ell)} &=
	\frac{1}{2} 
	\left( 
	u^{(k, \ell)} - 
	u^{(\ell)}|_{\Gamma^{(k,\ell)}}
	- \sum_{i=1}^2 \theta^{(\ell,k,i)}
	\big( u^{(k,\ell)}(\textbf x^{(k,\ell,i)}) - u^{(\ell)}(\textbf x^{(k,\ell,i)})\big) \right) \\ & \quad 
	+ \sum_{i=1}^{2}\frac{1}{d_{m_i,m_i}}\theta^{(\ell,k,i)}\sum_{j\in (\mathcal{P}(\textbf x^{(k,\ell,i)}) \cap \mathcal{N}_\Gamma(\ell)) \cup \{\ell\} }
	\left(
	u^{(k,\ell)}(\mathbf{x}^{(k,\ell,i)}) - u^{(j,\ell)}(\mathbf{x}^{(k,\ell,i)})
	\right)
	, \\
	\end{aligned}
	\]
	where $d_{n_i,n_i}$ and $d_{m_i,m_i}$ denote the entries of the matrix $D$ for the corresponding dofs and we use $u^{(\ell,\ell)} := u^{(\ell)}$.  
	Note that $\theta^{(k,\ell,i)}$ behaves like $\text{max}\{0,1-|x-\textbf x^{(k,\ell,i)}| / h^{(k)} \}^{p}$. Hence,
	\[
	\begin{aligned}
		\| \theta^{(k,\ell,i)} \|_{L_2(\Gamma^{(k,\ell)})}^2 &\eqsim \frac{h_{k}}{p}, \\
		|\theta^{(k,\ell,i)}|_{H^{1/2}(\Gamma^{(k,\ell)})}^2 &\leq |\theta^{(k,\ell,i)}|_{H^1(\Gamma^{(k,\ell)})} \|\theta^{(k,\ell,i)} \|_{L_2(\Gamma^{(k,\ell)})} \eqsim 1,  \text{ and } \\
		\| \theta^{(k,\ell,i)} \|_{L_\infty(\Gamma^{(k,\ell)})}^2 &= 1.
	\end{aligned}
	\]
	An application of the triangle inequality for the corner expressions 
	yields the stated result for Alg.~B. 
\end{proof}

The following lemma allows to estimate the $H^{1/2}$-seminorm on the
artificial edges.
\begin{lemma}\label{lem:hhalf:on:artif:edge}
	For any two patches $\Omega^{(k)}$ and $\Omega^{(\ell)}$ that share an edge $\Gamma^{(k,\ell)}$, the estimate
	\[
		|u^{(k,\ell)} |_{H^{1/2}{(\Gamma^{(k,\ell)})}}^2 \lesssim 
		|u^{(k)}|_{H^{1/2}(\Gamma^{(k,\ell)})}^2
		+ \frac{p^2}{h_{k\ell}} \| u^{(k,\ell)} - u^{(k)} \|_{L_2(\Gamma^{(k,\ell)})}^2 
	\]
	holds for all $u_e^{(k)} = \left( u^{(k)}, \left( u^{(k,\ell)}\right)_{\ell \in \mathcal{N}_\Gamma(k)} \right) \in V_e^{(k)}$.
\end{lemma}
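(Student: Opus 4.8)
The plan is to bound the $H^{1/2}$-seminorm of $u^{(k,\ell)}$ on the edge $\Gamma^{(k,\ell)}$ by splitting it into the contribution of $u^{(k)}$ (which appears on the right-hand side directly) and the contribution of the difference $u^{(k,\ell)}-u^{(k)}$. By the triangle inequality for the seminorm,
\[
  |u^{(k,\ell)}|_{H^{1/2}(\Gamma^{(k,\ell)})}^2
  \lesssim |u^{(k)}|_{H^{1/2}(\Gamma^{(k,\ell)})}^2
  + |u^{(k,\ell)}-u^{(k)}|_{H^{1/2}(\Gamma^{(k,\ell)})}^2,
\]
so it remains to estimate the second term on the right by $\frac{p^2}{h_{k\ell}}\|u^{(k,\ell)}-u^{(k)}\|_{L_2(\Gamma^{(k,\ell)})}^2$. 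The function $u^{(k,\ell)}-u^{(k)}|_{\Gamma^{(k,\ell)}}$ is a univariate spline of degree $p$: the first term is a trace of a spline on $\widehat\Omega$ pulled forward by $G_\ell$ and then restricted to $\Gamma^{(k,\ell)}$, the second similarly via $G_k$; both are, after pulling back to the parameter interval, splines of degree $p$ on the corresponding knot vectors. (If the two knot vectors differ, one works on the common refinement, which is again a $p$-open knot vector satisfying a quasi-uniformity estimate with grid size $\eqsim h_{k\ell}$ thanks to Assumption~\ref{ass:quasiuniform}.)

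The key step is therefore an inverse-type estimate for univariate splines of degree $p$ on a quasi-uniform grid of size $h_{k\ell}$ on the (physical) edge $\Gamma^{(k,\ell)}$: for any such spline $v$,
\[
  |v|_{H^{1/2}(\Gamma^{(k,\ell)})}^2 \lesssim \frac{p^2}{h_{k\ell}}\,\|v\|_{L_2(\Gamma^{(k,\ell)})}^2.
\]
This is the standard $p$-explicit inverse inequality relating the $H^{1/2}$-seminorm to the $L_2$-norm for splines; it is precisely of the type already used (and cited) in \cite{SchneckenleitnerTakacs:2019,Takacs:2019b}, and it follows by interpolating between the trivial $L_2$–$L_2$ bound and the $p$-explicit inverse estimate $|v|_{H^1}^2 \lesssim \tfrac{p^4}{h^2}\|v\|_{L_2}^2$ for splines (the factor $p^4$ in $H^1$ becomes $p^2$ after the square-root interpolation to $H^{1/2}$), using that $H^{1/2}$ is the interpolation space $[L_2,H^1]_{1/2}$ and that the edge has length $\eqsim H_{k\ell}$ with $h_{k\ell}=\widehat h_{k\ell}H_{k\ell}$ so the $H_k$-scaling cancels. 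Applying this to $v=u^{(k,\ell)}-u^{(k)}|_{\Gamma^{(k,\ell)}}$ gives exactly the claimed second term, and combining with the triangle inequality above finishes the proof.

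The main obstacle is the inverse estimate itself together with the bookkeeping when the two patches carry different grids: one must make sure the constant in the $H^{1/2}$–$L_2$ inverse inequality depends only on $C_3$ (via quasi-uniformity of the common-refinement knot vector) and not on the smoothness of the splines, and that the minimum grid size $h_{k\ell}$ — rather than either $h_k$ or $h_\ell$ individually — is the correct scale, which is why $h_{k\ell}=\min\{h_k,h_\ell\}$ appears. The transfer between parameter and physical domain is routine given Assumption~\ref{ass:nabla}, since on a single edge all the geometry factors are comparable to powers of $H_k$ and cancel against the definition $h_k=\widehat h_k H_k$.
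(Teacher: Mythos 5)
There is a genuine gap in the key step. Your triangle-inequality splitting of the seminorm is fine, but the whole weight of the argument rests on applying an $H^{1/2}$--$L_2$ inverse inequality directly to the difference $v=u^{(k,\ell)}-u^{(k)}|_{\Gamma^{(k,\ell)}}$, treated as a spline of degree $p$ on a quasi-uniform grid of size $\eqsim h_{k\ell}$. This is not justified. First, $u^{(k)}|_{\Gamma^{(k,\ell)}}$ is a spline in the parameterization induced by $G_k$, while $u^{(k,\ell)}$ is a spline in the parameterization induced by $G_\ell$; the paper nowhere assumes that these two parameterizations of the common edge coincide, so pulled back to one parameter interval the difference involves the transition map $G_\ell^{-1}\circ G_k$ and need not be a spline at all. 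Second, even if the edge parameterizations match, the common refinement of the two knot vectors is in general \emph{not} quasi-uniform: a breakpoint of the grid on $\Omega^{(k)}$ can lie arbitrarily close to a breakpoint of the grid on $\Omega^{(\ell)}$, producing arbitrarily small knot spans. The elementwise polynomial inverse estimate then yields a global constant governed by the \emph{smallest} knot span of the refined grid, not by $h_{k\ell}=\min\{h_k,h_\ell\}$, so the bound $|v|_{H^{1/2}}^2\lesssim \frac{p^2}{h_{k\ell}}\|v\|_{L_2}^2$ with a constant depending only on $C_3$ does not follow; Assumption~\ref{ass:quasiuniform} controls each patch grid separately, not their superposition.

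The paper avoids exactly this trap: it never applies an inverse estimate to the difference. Instead it writes $\|u^{(k,\ell)}-c\|_{H^{1/2}}^2\lesssim\|u^{(k)}-c\|_{H^{1/2}}^2+\|(u^{(k)}-c)-(u^{(k,\ell)}-c)\|_{H^{1/2}}^2$, bounds the second term by the interpolation inequality $\|w\|_{H^{1/2}}^2\lesssim\|w\|_{L_2}\|w\|_{H^1}$, splits the $H^1$ factor by the triangle inequality into $\|u^{(k)}-c\|_{H^1}+\|u^{(k,\ell)}-c\|_{H^1}$, and only then applies the $p$-explicit inverse estimate to each of these two functions \emph{on its own quasi-uniform grid}, which yields the factor $p/h_{k\ell}^{1/2}$ legitimately. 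The resulting $\|u^{(k,\ell)}-c\|_{H^{1/2}}$ term on the right is absorbed into the left-hand side by Young's inequality (kick-back), and the arbitrariness of $c$ together with the Poincar\'e inequality produces the seminorm statement. If you try to repair your route by the same device (interpolation inequality plus per-patch inverse estimates), you are led essentially to the paper's proof; as written, your argument does not go through.
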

\begin{proof}
		Let $c\in \mathbb R$ be arbitrary but fixed.
		Using the triangle inequality, we have
		\[
			\|u^{(k,\ell)} - c \|_{H^{1/2}{(\Gamma^{(k,\ell)})}}^2
			\lesssim
			\|u^{(k)} - c \|_{H^{1/2}{(\Gamma^{(k,\ell)})}}^2
			+
			\|(u^{(k)} - c) - (u^{(k,\ell)} - c) \|_{H^{1/2}{(\Gamma^{(k,\ell)})}}^2.
		\]
		Using \cite[Theorem 5.2, eq. (3)]{AdamsFournier:2003}
		and the triangle inequality, we obtain further
		\[
		\begin{aligned}
			& \|u^{(k,\ell)} - c \|_{H^{1/2}{(\Gamma^{(k,\ell)})}}^2 \\
			& \quad \lesssim
			\|u^{(k)} - c \|_{H^{1/2}{(\Gamma^{(k,\ell)})}}^2
			+
			\|u^{(k)} - u^{(k,\ell)} \|_{L_2{(\Gamma^{(k,\ell)})}}
			\|(u^{(k)} - c) - (u^{(k,\ell)} - c) \|_{H^{1}{(\Gamma^{(k,\ell)})}}\\
			& \quad \le
			\|u^{(k)} - c \|_{H^{1/2}{(\Gamma^{(k,\ell)})}}^2
			\\ & \qquad\qquad +
			\|u^{(k)} - u^{(k,\ell)} \|_{L_2{(\Gamma^{(k,\ell)})}}
			\left(\|u^{(k)} - c\|_{H^{1}{(\Gamma^{(k,\ell)})}} + \|u^{(k,\ell)} - c \|_{H^{1}{(\Gamma^{(k,\ell)})}}\right).
		\end{aligned}
		\]
		Using the equivalence of the norms on the parameter domain and the physical domain, cf. \cite[Lemma 4.13]{SchneckenleitnerTakacs:2019}, and an inverse inequality, cf. \cite[Lemma 4.3]{Schwab}
		and using $h_{k\ell}=\min\{h_{k}, h_{\ell}\}$, we obtain further
		\[
		\begin{aligned}
			& \|u^{(k,\ell)} - c \|_{H^{1/2}{(\Gamma^{(k,\ell)})}}^2 
			 \le
			\|u^{(k)} - c\|_{H^{1/2}{(\Gamma^{(k,\ell)})}}^2
			\\ & \qquad + 
			\frac{C\; p}{h_{k\ell}^{1/2}}
			\|u^{(k)} - u^{(k,\ell)} \|_{L_2{(\Gamma^{(k,\ell)})}}
			\left(
			\|u^{(k)} - c \|_{H^{1/2}{(\Gamma^{(k,\ell)})}} \right. \left.
			+ \|u^{(k,\ell)} - c \|_{H^{1/2}{(\Gamma^{(k,\ell)})}}
			\right),
		\end{aligned}
		\]
		where $C>0$ is an appropriately chosen constant (that only depends
		on the constant from Assumption~\ref{ass:nabla}). Using
		$ab \le a^2 + b^2/4$ and using $(a+b)^2\le 2(a^2+b^2)$, we have
		\[
		\begin{aligned}
			&\|u^{(k,\ell)} - c \|_{H^{1/2}{(\Gamma^{(k,\ell)})}}^2\\
			& \quad \le
			\|u^{(k)} - c\|_{H^{1/2}{(\Gamma^{(k,\ell)})}}^2
			+
			\frac{C^2\; p^2}{h_{k\ell}}
			\|u^{(k)} - u^{(k,\ell)} \|_{L_2{(\Gamma^{(k,\ell)})}}^2
			\\ & \hspace{5cm} +
			\frac{1}{4}
			\left(\|u^{(k)} - c\|_{H^{1/2}{(\Gamma^{(k,\ell)})}} + \|u^{(k,\ell)} - c\|_{H^{1/2}{(\Gamma^{(k,\ell)})}}\right)^2\\
			&\quad\le
			\frac{3}{2}\|u^{(k)} - c\|_{H^{1/2}{(\Gamma^{(k,\ell)})}}^2
			+
			\frac{C^2\; p^2}{h_{k\ell}}
			\|u^{(k)} - u^{(k,\ell)} \|_{L_2{(\Gamma^{(k,\ell)})}}^2
			+
			\frac{1}{2}
			\|u^{(k,\ell)} - c\|_{H^{1/2}{(\Gamma^{(k,\ell)})}}^2.
		\end{aligned}
		\]
		By subtracting $\frac{1}{2}\|u^{(k,\ell)} -c\|_{H^{1/2}{(\Gamma^{(k,\ell)})}}^2$,
		we immediately obtain 
		\[
			|u^{(k,\ell)} |_{H^{1/2}{(\Gamma^{(k,\ell)})}}^2
			\le
			\|u^{(k,\ell)} - c \|_{H^{1/2}{(\Gamma^{(k,\ell)})}}^2
			\lesssim 
			\|u^{(k)} - c\|_{H^{1/2}{(\Gamma^{(k,\ell)})}}^2
			+ 
			\frac{p^2}{h_{k\ell}}
			\|u^{(k)} - u^{(k,\ell)} \|_{L_2{(\Gamma^{(k,\ell)})}}^2
		\]	
		for all $u_e^{(k)} \in V_e^{(k)}$. Since this holds for all $c\in \mathbb R$, the
		Poincar{\'e} inequality yields the desired result.
\end{proof}

The next step is to show that a similar estimate holds for the seminorm 
\[
	|v|_{L_\infty^0(T)}^2 := \inf_{c\in \mathbb R} \| v - c\|_{L_\infty(T)}.
\]
Before we can prove that result, we need the following auxiliary result. 
\begin{lemma}
	\label{lem:max estimate}
	The estimate 
	\[
		\| u \|_{L_\infty(0,1)}^2 \leq \sqrt{2} \| u \|_{L_2(0,1)} \| u \|_{H^1(0,1)}
	\]
	holds for all $u \in H^1(0,1)$.
\end{lemma}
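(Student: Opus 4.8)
The plan is to prove this as a one-dimensional Gagliardo--Nirenberg (Agmon-type) inequality, using only the embedding $H^1(0,1)\hookrightarrow C([0,1])$, the fundamental theorem of calculus applied to $u^2$, and the Cauchy--Schwarz inequality. No density argument is needed, since a function in $H^1(0,1)$ is absolutely continuous, hence so is $u^2$, with $(u^2)'=2\,u\,u'$ almost everywhere.

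First I would choose a favourable base point. Since $u^2$ is continuous on $[0,1]$, the mean value theorem for integrals yields a point $x_0\in[0,1]$ with $u(x_0)^2=\int_0^1 u(t)^2\,\mathrm dt=\|u\|_{L_2(0,1)}^2$. For any $x\in[0,1]$ the fundamental theorem of calculus then gives
\[
	u(x)^2 = u(x_0)^2 + \int_{x_0}^{x}(u^2)'(t)\,\mathrm dt
	= \|u\|_{L_2(0,1)}^2 + 2\int_{x_0}^{x} u(t)\,u'(t)\,\mathrm dt .
\]
Estimating the last integral by $2\int_0^1 |u(t)|\,|u'(t)|\,\mathrm dt$, applying Cauchy--Schwarz, and taking the supremum over $x\in[0,1]$ produces
\[
	\|u\|_{L_\infty(0,1)}^2 \le \|u\|_{L_2(0,1)}^2 + 2\,\|u\|_{L_2(0,1)}\,\|u'\|_{L_2(0,1)} .
\]
The asserted bound then follows by an elementary estimate of the right-hand side in terms of $\|u\|_{L_2(0,1)}$ and $\|u\|_{H^1(0,1)}$, using $\|u\|_{L_2(0,1)}\le\|u\|_{H^1(0,1)}$, $\|u'\|_{L_2(0,1)}\le\|u\|_{H^1(0,1)}$, together with a Young-type inequality to balance the two contributions; this last bookkeeping step is what pins down the precise constant.

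The step I expect to matter is the choice of $x_0$. Taking $x_0$ to be an endpoint of $(0,1)$ would leave the term $u(0)^2$ or $u(1)^2$, which cannot be controlled by anything smaller than $\|u\|_{L_\infty(0,1)}^2$ itself, so the argument would become circular; it is precisely the mean value theorem that converts this term into $\|u\|_{L_2(0,1)}^2$ and lets the estimate close. Everything else --- absolute continuity of $u^2$, the single use of Cauchy--Schwarz, and the final constant bookkeeping --- is routine.
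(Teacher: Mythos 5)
Your route is essentially the paper's: both proofs apply the fundamental theorem of calculus to $u^2$ and then Cauchy--Schwarz; your mean-value choice of the base point $x_0$ (so that $u(x_0)^2=\|u\|_{L_2(0,1)}^2$) plays exactly the role of the paper's trick of integrating the identity over the free endpoint $t\in(0,1)$. Up to and including the intermediate bound $\|u\|_{L_\infty(0,1)}^2\le \|u\|_{L_2(0,1)}^2+2\,\|u\|_{L_2(0,1)}\,\|u'\|_{L_2(0,1)}$ your argument is correct and complete.

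The gap is the final ``bookkeeping'' step, which you dismiss as routine but which in fact cannot deliver the stated constant. Writing $a=\|u\|_{L_2(0,1)}$ and $b=\|u'\|_{L_2(0,1)}$, you would need $a^2+2ab\le \sqrt{2}\,a\,\sqrt{a^2+b^2}$, i.e.\ $a+2b\le\sqrt{2}\sqrt{a^2+b^2}$, which already fails for $a=b$; what your intermediate bound does give, by Cauchy--Schwarz in $\mathbb{R}^2$, is the constant $\sqrt{5}$. Indeed the constant $\sqrt{2}$ in the statement is not correct as it stands: for $u(x)=x$ one has $\|u\|_{L_\infty(0,1)}^2=1$ while $\sqrt{2}\,\|u\|_{L_2(0,1)}\|u\|_{H^1(0,1)}=\tfrac{2\sqrt{2}}{3}<1$. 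The paper's own proof reaches $\sqrt{2}$ only because it writes the identity as $|u(z)|^2=-\int_z^t u\,u'\,\mathrm{d}s+|u(t)|^2$, dropping the factor $2$ coming from $(u^2)'=2uu'$, which you track correctly. Since the lemma is only invoked inside $\lesssim$-estimates (in Lemmas~\ref{lem:linf:on:artif:edge} and~\ref{lem:corner estimate}), any fixed constant such as $\sqrt{5}$ is fully sufficient for the paper; but as a proof of the lemma as literally stated, your last step is not a harmless balancing act --- it is the point where the claimed constant breaks down, and it should be stated (and, if $\sqrt{2}$ is insisted upon, the statement itself corrected) rather than waved through.
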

\begin{proof}
	Since $u$ is continuous, $u$ takes its maximum for some $z \in [0,1]$. Then, by the fundamental theorem of differential and integral calculus we can write
	\[
		|u(z)|^2 = -\int_{z}^{t}u(s)u'(s) \; \textrm{d}s + | u(t) |^2.
	\]
	Next, we integrate over the unit interval and use the Cauchy-Schwarz inequality to obtain
	\[
	\begin{aligned}
		|u(z)|^2 
		&= -\int_{0}^{1} \int_{z}^{t}u(s)u'(s) \; \textrm{d}s + | u(t) |^2 \; \textrm{d}t \leq \int_{0}^{1} \| u \|_{L_2(z,t)} \| u' \|_{L_2(z,t)} \; \textrm{d}t + \int_{0}^{1}|u(t)|^2 \; \textrm{d}t \\
		&\leq \int_{0}^{1} \| u \|_{L_2(0,1)} \| u' \|_{L_2(0,1)} \; \textrm{d}t + \| u \|_{L_2(0,1)}^2 = \| u \|_{L_2(0,1)} 
		\left( \| u' \|_{L_2(0,1)} + \| u \|_{L_2(0,1)} \right) \\
		&\leq \sqrt{2} \| u \|_{L_2(0,1)} \| u \|_{H^1(0,1)},
	\end{aligned}
	\]
	which was to show.
\end{proof}
The next lemma allows to estimate the $L^0_\infty$-seminorm similar to
Lemma~\ref{lem:hhalf:on:artif:edge}.
\begin{lemma}\label{lem:linf:on:artif:edge}  
	For any two patches $\Omega^{(k)}$ and $\Omega^{(\ell)}$ that share an edge $\Gamma^{(k,\ell)}$, the inequality 
	\[
		|u^{(k,\ell)}|_{L_\infty^0(\Gamma^{(k,\ell)})}^2 \lesssim 
		| u^{(k)} |_{L_\infty^0(\Gamma^{(k,\ell)})}^2 +
		| u^{(k)} |_{H^{1/2}(\Gamma^{(k,\ell)})}^2 + 
		\frac{p^2}{h_{k\ell}}\| u^{(k)} - u^{(k,\ell)} \|_{L_2(\Gamma^{(k,\ell)})}^2
	\]
	holds for all $u_e^{(k)} = \left( u^{(k)}, \left( u^{(k,\ell)} \right)_{\ell \in \mathcal{N}_\Gamma(k)} \right) \in V_e^{(k)}$.
\end{lemma}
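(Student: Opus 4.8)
The plan is to reduce the claim to the artificial‑edge $H^{1/2}$‑estimate of Lemma~\ref{lem:hhalf:on:artif:edge} together with the scalar inequality of Lemma~\ref{lem:max estimate}. Since the $L_\infty^0$‑seminorm is subadditive and $\lvert w\rvert_{L_\infty^0}\le\lVert w\rVert_{L_\infty}$, and since $(a+b)^2\le 2a^2+2b^2$, the inequality
\[
	\lvert u^{(k,\ell)}\rvert_{L_\infty^0(\Gamma^{(k,\ell)})}^2
	\lesssim
	\lvert u^{(k)}\rvert_{L_\infty^0(\Gamma^{(k,\ell)})}^2
	+\lVert u^{(k)}-u^{(k,\ell)}\rVert_{L_\infty(\Gamma^{(k,\ell)})}^2
\]
holds trivially, so it suffices to bound the last term by $\lvert u^{(k)}\rvert_{H^{1/2}(\Gamma^{(k,\ell)})}^2+\frac{p^2}{h_{k\ell}}\lVert u^{(k)}-u^{(k,\ell)}\rVert_{L_2(\Gamma^{(k,\ell)})}^2$. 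Write $w:=u^{(k)}-u^{(k,\ell)}$; being a difference of two univariate splines (the traces of $u^{(k)}$ and of $u^{(\ell)}$ on $\Gamma^{(k,\ell)}$), it is continuous and lies in $H^1(\Gamma^{(k,\ell)})$.

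Next I would transform $\Gamma^{(k,\ell)}$ affinely onto the reference interval $(0,1)$ — its length satisfies $\lvert\Gamma^{(k,\ell)}\rvert\eqsim H_k\eqsim H_\ell$ by Assumption~\ref{ass:nabla} — and invoke Lemma~\ref{lem:max estimate}. After undoing the scaling and using $\sqrt{a+b}\le\sqrt a+\sqrt b$ this gives
\[
	\lVert w\rVert_{L_\infty(\Gamma^{(k,\ell)})}^2
	\lesssim
	\frac{1}{H_k}\lVert w\rVert_{L_2(\Gamma^{(k,\ell)})}^2
	+\lVert w\rVert_{L_2(\Gamma^{(k,\ell)})}\,\lvert w\rvert_{H^1(\Gamma^{(k,\ell)})}.
\]
The delicate step is to control $\lvert w\rvert_{H^1(\Gamma^{(k,\ell)})}$: $w$ is only piecewise polynomial on the common refinement of the two traces, whose cells can be arbitrarily small, so an inverse inequality cannot be applied to $w$ itself. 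Instead I would split $\lvert w\rvert_{H^1}\le\lvert u^{(k)}\rvert_{H^1}+\lvert u^{(k,\ell)}\rvert_{H^1}$ and apply the inverse inequality of \cite[Lemma 4.3]{Schwab} (together with the norm equivalence \cite[Lemma 4.13]{SchneckenleitnerTakacs:2019}) to each summand on \emph{its own} grid; since $h_k\ge h_{k\ell}$ and $h_\ell\ge h_{k\ell}$,
\[
	\lvert u^{(k)}\rvert_{H^1(\Gamma^{(k,\ell)})}\lesssim\frac{p}{h_{k\ell}^{1/2}}\lvert u^{(k)}\rvert_{H^{1/2}(\Gamma^{(k,\ell)})},
	\qquad
	\lvert u^{(k,\ell)}\rvert_{H^1(\Gamma^{(k,\ell)})}\lesssim\frac{p}{h_{k\ell}^{1/2}}\lvert u^{(k,\ell)}\rvert_{H^{1/2}(\Gamma^{(k,\ell)})}.
\]
Feeding the artificial‑edge estimate of Lemma~\ref{lem:hhalf:on:artif:edge}, i.e.\ $\lvert u^{(k,\ell)}\rvert_{H^{1/2}}\lesssim\lvert u^{(k)}\rvert_{H^{1/2}}+\frac{p}{h_{k\ell}^{1/2}}\lVert w\rVert_{L_2}$, into the second bound yields
\[
	\lvert w\rvert_{H^1(\Gamma^{(k,\ell)})}\lesssim\frac{p}{h_{k\ell}^{1/2}}\lvert u^{(k)}\rvert_{H^{1/2}(\Gamma^{(k,\ell)})}+\frac{p^2}{h_{k\ell}}\lVert w\rVert_{L_2(\Gamma^{(k,\ell)})}.
\]

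Finally I would substitute this into the $L_\infty$‑bound, apply Young's inequality $ab\le\frac12a^2+\frac12b^2$ to the mixed product $\frac{p}{h_{k\ell}^{1/2}}\lVert w\rVert_{L_2}\,\lvert u^{(k)}\rvert_{H^{1/2}}$, and absorb $\frac1{H_k}\lVert w\rVert_{L_2}^2$ into $\frac{p^2}{h_{k\ell}}\lVert w\rVert_{L_2}^2$, which is legitimate because $h_{k\ell}\le h_k\le H_k$. This produces $\lVert w\rVert_{L_\infty(\Gamma^{(k,\ell)})}^2\lesssim\lvert u^{(k)}\rvert_{H^{1/2}(\Gamma^{(k,\ell)})}^2+\frac{p^2}{h_{k\ell}}\lVert w\rVert_{L_2(\Gamma^{(k,\ell)})}^2$, and combining it with the trivial reduction above finishes the proof. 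I expect the only genuinely non‑routine point to be exactly this treatment of $\lvert w\rvert_{H^1}$ — recognising that the inverse inequality has to be used patchwise on $u^{(k)}$ and $u^{(k,\ell)}$ separately rather than on their difference — whereas the scaling bookkeeping, the application of Young's inequality, and the absorption step are routine and closely parallel the proof of Lemma~\ref{lem:hhalf:on:artif:edge}.
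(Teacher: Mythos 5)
Your proposal is correct and follows essentially the same route as the paper's proof: triangle inequality for the $L_\infty^0$-seminorm, Lemma~\ref{lem:max estimate} applied to the difference $u^{(k)}-u^{(k,\ell)}$, splitting the $H^1$-factor so the inverse inequality ($H^1\to H^{1/2}$, constant $p/h_{k\ell}^{1/2}$) is applied to $u^{(k)}$ and $u^{(k,\ell)}$ separately on their own grids, Young's inequality, and finally Lemma~\ref{lem:hhalf:on:artif:edge}. The only cosmetic differences are that you invoke Lemma~\ref{lem:hhalf:on:artif:edge} before rather than after the Young step and state the inverse estimate in seminorm form, whereas the paper carries an arbitrary constant $c$ and concludes with the Poincar\'e inequality --- the same mechanism in a different order.
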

\begin{proof}
	Using the triangle inequality, we obtain 
	\[
		| u^{(k,\ell)} |_{L_\infty^0(\Gamma^{(k,\ell)})}^2
		\lesssim | u^{(k)} |_{L_\infty^0(\Gamma^{(k,\ell)})}^2 + 
		| u^{(k)} - u^{(k,\ell)} |_{L_\infty^0(\Gamma^{(k,\ell)})}^2.
	\]
	As a next step, we apply Lemma \ref{lem:max estimate} to the difference $| u^{(k)} - u^{(k,\ell)} |_{L_\infty^0(\Gamma^{(k,\ell)})}^2$.
	Since the $L_\infty$-norm does not change if we are on the physical or the parameter domain, we can apply Lemma \ref{lem:max estimate} and subsequently utilize the equivalence of the norms on the parameter and the physical domain, cf. \cite[Lemma 4.13]{SchneckenleitnerTakacs:2019} to get
	\[
	\begin{aligned}
		| u^{(k)} - u^{(k,\ell)} |_{L_\infty^0(\Gamma^{(k,\ell)})}^2
		&\lesssim 
		\| u^{(k)} - u^{(k,\ell)} \|_{L_2(\Gamma^{(k,\ell)})}
		\| u^{(k)} - u^{(k,\ell)} \|_{H^1(\Gamma^{(k,\ell)})}.
	\end{aligned}
	\]
	The triangle inequality allows to estimate
	\[
		| u^{(k)} - u^{(k,\ell)} |_{L_\infty^0(\Gamma^{(k,\ell)})}^2 
		\lesssim
		\| u^{(k)} - u^{(k,\ell)} \|_{L_2(\Gamma^{(k,\ell)})}
		\left(\| u^{(k)}-c \|_{H^1(\Gamma^{(k,\ell)})} + \| u^{(k,\ell)}-c \|_{H^1(\Gamma^{(k,\ell)})}\right)
	\]
		for all $c\in \mathbb R$.
	The equivalence of the norms on the physical domain and the parameter domain, cf. \cite[Lemma 4.13]{SchneckenleitnerTakacs:2019} and an inverse estimate, cf. \cite[Lemma 4.3]{SchneckenleitnerTakacs:2019}, give
	\[
	\begin{aligned}
		&| u^{(k)} - u^{(k,\ell)} |_{L_\infty^0(\Gamma^{(k,\ell)})}^2 \\
		& \quad \lesssim
		\frac{p}{h_{k\ell}^{1/2}}\| u^{(k)} - u^{(k,\ell)} \|_{L_2(\Gamma^{(k,\ell)})}
		\left(\| u^{(k)}-c \|_{H^{1/2}(\Gamma^{(k,\ell)})} + \| u^{(k,\ell)}-c \|_{H^{1/2}(\Gamma^{(k,\ell)})}\right).
	\end{aligned}
	\]
	We use $ab \lesssim a^2 + b^2$ and $(a+b)^2 \lesssim a^2 + b^2$ to get
	\[
	\begin{aligned}
		& | u^{(k)} - u^{(k,\ell)} |_{L_\infty^0(\Gamma^{(k,\ell)})}^2 \\
		&\quad \lesssim
		\frac{p^2}{h_{k\ell}}\| u^{(k)} - u^{(k,\ell)} \|_{L_2(\Gamma^{(k,\ell)})}^2
		+ \| u^{(k)}-c \|_{H^{1/2}(\Gamma^{(k,\ell)})}^2 + \|u^{(k,\ell)}-c \|_{H^{1/2}(\Gamma^{(k,\ell)})}^2.
	\end{aligned}
	\]
	Since this holds for all $c\in \mathbb R$, 
	the Poincar{\'e} inequality gives further
	\[
	\begin{aligned}
		| u^{(k)} - u^{(k,\ell)} |_{L_\infty^0(\Gamma^{(k,\ell)})}^2 
		\lesssim
		\frac{p^2}{h_{k\ell}}\| u^{(k)} - u^{(k,\ell)} \|_{L_2(\Gamma^{(k,\ell)})}^2
		+ | u^{(k)} |_{H^{1/2}(\Gamma^{(k,\ell)})}^2 
		+ |u^{(k,\ell)} |_{H^{1/2}(\Gamma^{(k,\ell)})}^2.
	\end{aligned}
	\]
	Lemma \ref{lem:hhalf:on:artif:edge} yields the final result. 
\end{proof}

\cite[Lemma 4.17]{SchneckenleitnerTakacs:2019} states that the term $\Delta^{(k,\ell,i)}$ can be estimated by expressions which only involve patches sharing an edge.  
Now we prove a variant of \cite[Lemma 4.18]{SchneckenleitnerTakacs:2019}
that fits our needs.
\begin{lemma}
	\label{lem:corner estimate}
	For any two patches $\Omega^{(k)}$ and $\Omega^{(\ell)}$, sharing an edge $\Gamma^{(k,\ell)}$ which connects the vertices $\mathbf x^{(k,\ell,1)}$ and $\mathbf x^{(k,\ell, 2)}$ and 
	$
	\int_{\Gamma^{(k,\ell)}}u^{(k)}(s) - u^{(\ell,k)}(s) \; \mathrm{d}s = 0
	$ 
	and $i = 1,2$, we have 
	\[
		\Delta^{(k,\ell,i)}
		\lesssim
		\Lambda
		\left(
			|\mathcal{H}_h^{(k)} u^{(k)}|_{H^1(\Omega^{(k)})}^2 + 
			|\mathcal{H}_h^{(\ell)} u^{(\ell)}|_{H^1(\Omega^{(\ell)})}^2
		\right)
		+ \frac{p^2}{h_{k\ell}} 
					\| u^{(\ell)} - u^{(\ell, k)} \|_{L_2(\Gamma^{(k,\ell)})}^2 
	\]
	for all $u = \left(
		u_e^{(1)}, \dots, u_e^{(K)} 
	\right) \in \widetilde{W}$, where 
	$
		\Lambda = 1+\mathrm{log} \; p + \mathrm{max}_{j = 1,\dots,K} \mathrm{log} \frac{H_{j}} {h_{j}}
	$
	and $\Delta^{(k,\ell,i)}$ is as in~\eqref{eq:def:delta}.
\end{lemma}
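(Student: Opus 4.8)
\emph{Proof plan.} For Alg.~A and C there is nothing to prove, since $\Delta^{(k,\ell,i)}=0$ by \eqref{eq:def:delta}; all the work is for Alg.~B. By \cite[Lemma~4.17]{SchneckenleitnerTakacs:2019} the quantity $\Delta^{(k,\ell,i)}$ is controlled by corner jumps involving only patches that share an edge incident to the vertex $\mathbf x:=\mathbf x^{(k,\ell,i)}$, and by Assumption~\ref{ass:neighbors} the number of these is bounded. Hence it suffices to establish a bound of the asserted shape for one representative contribution, which I take to be the jump $|u^{(k)}(\mathbf x)-u^{(\ell,k)}(\mathbf x)|^2$ across the edge $\Gamma^{(k,\ell)}$ itself; all remaining contributions are treated identically (with $\ell$ replaced by the respective neighbour) and are absorbed by the uniformly bounded number of patches meeting at $\mathbf x$.

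The core step is to split off the common edge average. Setting $\bar u:=|\Gamma^{(k,\ell)}|^{-1}\int_{\Gamma^{(k,\ell)}}u^{(k)}\,\mathrm ds$, the hypothesis $\int_{\Gamma^{(k,\ell)}}(u^{(k)}-u^{(\ell,k)})\,\mathrm ds=0$ ensures that $\bar u$ is simultaneously the edge average of $u^{(\ell,k)}$, so that by the triangle inequality
\[
 |u^{(k)}(\mathbf x)-u^{(\ell,k)}(\mathbf x)|^2
 \lesssim |u^{(k)}(\mathbf x)-\bar u|^2+|u^{(\ell,k)}(\mathbf x)-\bar u|^2
 \lesssim |u^{(k)}|_{L_\infty^0(\Gamma^{(k,\ell)})}^2+|u^{(\ell,k)}|_{L_\infty^0(\Gamma^{(k,\ell)})}^2 .
\]
In the last step I use that a shift of a function by its $L_2$-average on $\Gamma^{(k,\ell)}$ is within a factor $2$ of the $L_\infty$-optimal shift, so that $|v(\mathbf x)-\bar v|^2\lesssim |v|_{L_\infty^0(\Gamma^{(k,\ell)})}^2$; the role of the hypothesis is precisely to let the single constant $\bar u$ serve both $u^{(k)}$ and $u^{(\ell,k)}$.

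It then remains to bound the two $L_\infty^0$-seminorms. For the genuine boundary trace $u^{(k)}$ I would invoke the $h$- and $p$-explicit discrete Sobolev inequality, together with the equivalence of norms on the parameter and physical domains (cf.\ \cite[Lemma~4.13]{SchneckenleitnerTakacs:2019}) and the minimality of the discrete harmonic extension, to get $|u^{(k)}|_{L_\infty^0(\Gamma^{(k,\ell)})}^2\lesssim \Lambda\,|\mathcal H_h^{(k)}u^{(k)}|_{H^1(\Omega^{(k)})}^2$, exactly as in the proof of \cite[Lemma~4.18]{SchneckenleitnerTakacs:2019}. For the artificial-interface copy $u^{(\ell,k)}$, which is the component of $u_e^{(\ell)}$ attached to the neighbour $k$, I would apply Lemma~\ref{lem:linf:on:artif:edge} with the roles of $k$ and $\ell$ interchanged, which gives
\[
 |u^{(\ell,k)}|_{L_\infty^0(\Gamma^{(k,\ell)})}^2
 \lesssim |u^{(\ell)}|_{L_\infty^0(\Gamma^{(k,\ell)})}^2+|u^{(\ell)}|_{H^{1/2}(\Gamma^{(k,\ell)})}^2
 +\frac{p^2}{h_{k\ell}}\|u^{(\ell)}-u^{(\ell,k)}\|_{L_2(\Gamma^{(k,\ell)})}^2 ,
\]
and then control the first two terms by $\Lambda\,|\mathcal H_h^{(\ell)}u^{(\ell)}|_{H^1(\Omega^{(\ell)})}^2$ via the same discrete Sobolev and trace estimates (recall $\Lambda\ge 1$). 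Collecting these bounds yields the claim for the representative jump, and summing the at most $C_2$ contributions supplied by \cite[Lemma~4.17]{SchneckenleitnerTakacs:2019} finishes the argument.

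The one genuinely new point — and the reason this is a \emph{variant} of, rather than a copy of, \cite[Lemma~4.18]{SchneckenleitnerTakacs:2019} — is the torn unknown $u^{(\ell,k)}$ sitting at the corner: in $\widetilde W$ it is coupled to $u^{(k)}$ only through the edge average, so one cannot compare $u^{(\ell,k)}(\mathbf x)$ with $u^{(k)}(\mathbf x)$ directly but must route through $u^{(\ell)}$ by means of Lemma~\ref{lem:linf:on:artif:edge}, and this is exactly what produces the extra term $\frac{p^2}{h_{k\ell}}\|u^{(\ell)}-u^{(\ell,k)}\|_{L_2(\Gamma^{(k,\ell)})}^2$ on the right-hand side. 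Keeping this term along is harmless for the overall analysis, since it is precisely the quantity entering $r^{(\ell)}(\cdot,\cdot)$ and hence, by Lemma~\ref{lem:patchwise equ}, is controlled by the local $d_e^{(\ell)}$-norm.
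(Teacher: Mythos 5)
Your proposal is correct, but it proves the key corner estimate by a genuinely different decomposition than the paper. The paper inserts $u^{(\ell)}(\mathbf x^{(k,\ell,i)})$ via the triangle inequality and treats the two resulting jumps separately: the jump $|u^{(k)}(\mathbf x)-u^{(\ell)}(\mathbf x)|^2$ is handled by shifting $u^{(k)}$ by the average mismatch $\rho$ (which the hypothesis allows to be rewritten through $u^{(\ell,k)}-u^{(\ell)}$ and bounded by Cauchy--Schwarz) and then invoking the conforming two-patch corner estimate \cite[Lemma~4.18]{SchneckenleitnerTakacs:2019}; the jump $|u^{(\ell)}(\mathbf x)-u^{(\ell,k)}(\mathbf x)|^2$ between the two splines on the same edge is estimated by a local one-dimensional argument on the first common polynomial segment of length $\eta\eqsim\widehat h_{k\ell}$ together with a polynomial inverse inequality, which is what produces the term $\frac{p^2}{h_{k\ell}}\|u^{(\ell)}-u^{(\ell,k)}\|_{L_2(\Gamma^{(k,\ell)})}^2$. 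You instead pivot at the common edge average $\bar u$ (this is where the hypothesis enters in your version), bound both corner deviations by $L_\infty^0$-seminorms (your factor-two observation about shifting by the mean is correct), and then control $|u^{(k)}|_{L_\infty^0}^2$ by $\Lambda\,|\mathcal H_h^{(k)}u^{(k)}|_{H^1(\Omega^{(k)})}^2$ (this discrete Sobolev bound is indeed available from \cite[Lemma~4.14]{SchneckenleitnerTakacs:2019} plus Poincar\'e, exactly as used in the proof of Theorem~\ref{thrm:fin}) and $|u^{(\ell,k)}|_{L_\infty^0}^2$ by Lemma~\ref{lem:linf:on:artif:edge} with $k$ and $\ell$ interchanged plus the same Sobolev/trace bounds on patch $\Omega^{(\ell)}$; this reproduces the stated bound. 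Your route is more modular within this paper (it reuses Lemma~\ref{lem:linf:on:artif:edge} instead of redoing a pointwise estimate on the common knot span), at the price of invoking the $\Lambda$-weighted bounds on both patches; the paper's route leans on the earlier conforming corner lemma and obtains the penalty term from a sharper, self-contained 1D estimate. One imprecision you share with (indeed inherit from) the paper: for Alg.~B, $\Delta^{(k,\ell,i)}$ as defined in~\eqref{eq:def:delta} is a sum over all edge-neighbors $j$ of $k$ at the vertex, and the contributions with $j\neq\ell$, treated as you indicate, produce right-hand side terms attached to patch $j$ rather than $\ell$; the paper silently identifies $\Delta^{(k,\ell,i)}$ with the single jump across $\Gamma^{(k,\ell)}$, and in either reading the discrepancy is harmless once the estimate is summed over all edges in Lemmas~\ref{lem:4:10} and~\ref{lem:second}.
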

\begin{proof}
	By the triangle inequality we have
	\begin{equation}\label{eq:proof:lem:corner:estimate:1}
	\begin{aligned}
		\Delta^{(k,\ell,i)} &=
		| u^{(k)}(\textbf x^{(k,\ell,i)}) - u^{(\ell,k)}(\textbf x^{(k,\ell,i)}) |^2 \\
		&\lesssim
		| u^{(k)}(\textbf x^{(k,\ell,i)}) - u^{(\ell)}(\mathbf{x}^{(k,\ell,i)}) |^2  
		+| u^{(\ell)}(\textbf x^{(k,\ell,i)}) - u^{(\ell,k)}(\mathbf{x}^{(k,\ell,i)}) |^2.
	\end{aligned}
	\end{equation}
	Let 
	\[
	\begin{aligned}
		\rho :=
		\frac{1}{|\Gamma^{(k,\ell)}|}
		\left(
		\int_{\Gamma^{(k,\ell)}} u^{(k)} \; \mathrm{d}s - \int_{\Gamma^{(k,\ell)}} u^{(\ell)} \; \mathrm{d}s \right) = 
		\frac{1}{|\Gamma^{(k,\ell)}|}
		\left(
		\int_{\Gamma^{(k,\ell)}} u^{(\ell,k)} \; \mathrm{d}s - \int_{\Gamma^{(k,\ell)}} u^{(\ell)} \; \mathrm{d}s
		\right)
	\end{aligned}
	\]
	and observe that the Cauchy-Schwarz inequality yields 
	\[
		\rho^2 \leq \frac{1}{ | \Gamma^{(k,\ell)} |} \| u^{(\ell, k)} - u^{(\ell)} \|^2_{L_2(\Gamma^{(k,\ell)})} .
	\]
	We further observe that due to Assumption~\ref{ass:nabla}, $|\Gamma^{(k,\ell)}| \eqsim \min \{ H_k, H_\ell \}$.
	The triangle inequality yields
	\[
	\begin{aligned}
		| u^{(k)}(\textbf x^{(k,\ell,i)}) - u^{(\ell)}(\mathbf{x}^{(k,\ell,i)}) |^2 
		&\lesssim
		| \left(u^{(k)}-\rho\right)(\textbf x^{(k,\ell,i)}) - u^{(\ell)}(\mathbf{x}^{(k,\ell,i)}) |^2 + |\rho|^2.
	\end{aligned}
	\]
	Using \cite[Lemma 4.18]{SchneckenleitnerTakacs:2019}, we obtain further
	\begin{equation}\label{eq:proof:lem:corner:estimate:2}
	\begin{aligned}
		& | u^{(k)}(\textbf x^{(k,\ell,i)}) - u^{(\ell)}(\mathbf{x}^{(k,\ell,i)}) |^2 \\
		& \qquad \lesssim
		\Lambda^{} (|\mathcal{H}_h^{(k)} u^{(k)}|_{H^1(\Omega^{(k)})}^2 + |\mathcal{H}_h^{(\ell)} u^{(\ell)}|_{H^1(\Omega^{(\ell)})}^2 )
		+ \frac{1}{\min \{H_k, H_\ell \} }\| u^{(\ell, k)} - u^{(\ell)} \|_{L_2(\Gamma^{(k,\ell)})}^2  
		\\
		& \qquad \le \Lambda^{} (|\mathcal{H}_h^{(k)} u^{(k)}|_{H^1(\Omega^{(k)})}^2 + |\mathcal{H}_h^{(\ell)} u^{(\ell)}|_{H^1(\Omega^{(\ell)})}^2 )
		+ \frac{p^2}{h_{k\ell}}\| u^{(\ell, k)} - u^{(\ell)} \|_{L_2(\Gamma^{(k,\ell)})}^2.
	\end{aligned}
	\end{equation}
	Now, we estimate $| u^{(\ell)}(\textbf x^{(k,\ell,i)}) - u^{(\ell,k)}(\mathbf{x}^{(k,\ell,i)}) |^2$ from above.
	To do so, we set $\widehat{u}^{(\ell)} = u^{(\ell)} \circ G_\ell$ and 
	$\widehat{u}^{(\ell,k)} = u^{(\ell,k)} \circ G_k$. 
	With unitary transformations, i.e., rotations and reflections, we can transform the patches such that the pre-image of $\Gamma^{(k,\ell)}$ is $(0,1) \times \{0\}$ and that the pre-image of $\mathbf{x}^{(k,\ell, i)}$ is $0$. 	

	Let $\widetilde{u}^{(\ell)}:=\widehat{u}^{(\ell)}(\cdot, 0)$
	and
	$\widetilde{u}^{(\ell,k)}:=\widehat{u}^{(\ell,k)}(\cdot, 0)$.
	Let $\eta$ be the largest value such that $\widetilde u^{(\ell)}$ and
	$\widetilde u^{(\ell, k)}$ are polynomial on $(0, \eta)$. Using the quasi-uniformity
	assumption, cf. Assumption \ref{ass:quasiuniform}, we
	obtain~$\eta \eqsim \widehat{h}_{k\ell}$. Arguments that are analogous
	to those used in the proof of Lemma~\ref{lem:max estimate},
	together with the inverse
	inequality \cite[Theorem 4.76, eq. (4.6.5)]{Schwab} yield 
	\[
		\begin{aligned}
			| \widehat{u}^{(\ell)}(0) - \widehat{u}^{(\ell,k)}(0) |^2
			&\lesssim
			\frac{1}{\eta} \| \widetilde{u}^{(\ell)} - \widetilde{u}^{(\ell, k)} \|_{L_2(0,\eta)}^2  +
			2 \| \widetilde{u}^{(\ell)} - \widetilde{u}^{(\ell, k)} \|_{L_2(0,\eta)}
			| \widetilde{u}^{(\ell)} - \widetilde{u}^{(\ell, k)} |_{H^1(0,\eta)}\\
			&\lesssim
			\frac{1}{\eta} \| \widetilde{u}^{(\ell)} - \widetilde{u}^{(\ell, k)} \|_{L_2(0,\eta)}^2 +
			\frac{p^2}{\eta} \| \widetilde{u}^{(\ell)} - \widetilde{u}^{(\ell, k)} \|_{L_2(0,\eta)}^2.
		\end{aligned}
		\]
		Since the $L_2$-norm on $(0,1)$ is larger than the $L_2$-norm on $(0,\eta)$ and $\eta\eqsim \widehat{h}_{k\ell}$ we arrive at the estimate 	
		\[
		\begin{aligned}
			| \widehat{u}^{(\ell)}(0) - \widehat{u}^{(\ell,k)}(0) |^2 
			&\lesssim
			\frac{p^2}{\widehat{h}_{k\ell}} \| \widetilde{u}^{(\ell)} - \widetilde{u}^{(\ell, k)} \|_{L_2(0,1)}^2 
			=
			\frac{p^2}{\widehat{h}_{k\ell}} \| \widehat{u}^{(\ell)} - \widehat{u}^{(\ell, k)} \|_{L_2(\widehat{\Gamma}^{(k,\ell)})}^2. 
		\end{aligned}
		\]	
		An application of \cite[Lemma 4.13]{SchneckenleitnerTakacs:2019} finally yields
	\begin{equation}\label{eq:proof:lem:corner:estimate:3}
	\begin{aligned}
		| u^{(\ell)}(\textbf x^{(k,\ell,i)}) - u^{(\ell,k)}(\mathbf{x}^{(k,\ell,i)}) |^2 
		\lesssim
		\frac{p^2}{h_{k\ell}} \| u^{(\ell)} - u^{(\ell, k)} \|_{L_2(\Gamma^{(k,\ell)})}^2.
	\end{aligned}
	\end{equation}
	The combination of \eqref{eq:proof:lem:corner:estimate:1},
	\eqref{eq:proof:lem:corner:estimate:2}
	and
	\eqref{eq:proof:lem:corner:estimate:3}
	finishes the proof.
\end{proof}

Before we give a proof of the main theorem, we estimate the sum of $w_e^{(k)}$-seminorms over all patches.

\begin{lemma}\label{lem:4:10}
	Let $u$ and $w$ be as in Lemma~\ref{lem:bbt}. Then, we have
	\[
	\begin{aligned}
		&\sum_{k=1}^K \sum_{\ell\in \mathcal{N}_\Gamma(k)} \left(
		|w^{(k)}|_{H^{1/2}(\Gamma^{(k,\ell)})}^2
		+
		|w^{(k)}|_{L^0_\infty(\Gamma^{(k,\ell)})}^2		
		\right)
		\\
		&\lesssim
		\sum_{k=1}^K\sum_{\ell\in \mathcal{N}_\Gamma(k)}
		\left( |u^{(k)}|_{H^{1/2}(\Gamma^{(k,\ell)})}^2 
		+|u^{(k)}|_{L^0_\infty(\Gamma^{(k,\ell)})}^2 
		+ \frac{p^2}{h_{k\ell}}\| u^{(k,\ell)} - u^{(k)} \|_{L_2(\Gamma^{(k,\ell)})}^2 \right)
		\\&\qquad
		+ \Lambda 
		\sum_{k=1}^K |\mathcal{H}_h^{(k)} u^{(k)}|_{H^1(\Omega^{(k)})}^2,
	\end{aligned}
	\]
	where $\Lambda:=1+\log p + \max_{k=1,\ldots,K} \log \frac{H_{k}}{h_{k}}$.
\end{lemma}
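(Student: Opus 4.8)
The plan is to combine Lemma~\ref{lem:bbt}, which controls the seminorms of $w^{(k)}$ on a single edge in terms of $u$-quantities on that edge plus the corner terms $\Delta^{(k,\ell,i)}$, with Lemmas~\ref{lem:hhalf:on:artif:edge} and~\ref{lem:linf:on:artif:edge}, which move the ``foreign'' seminorms $|u^{(\ell,k)}|$ living on the artificial copy of $\Gamma^{(k,\ell)}$ back onto $u^{(\ell)}$, and then to sum over all edges, exploiting the fact that each edge is visited twice by the double sum $\sum_k\sum_{\ell\in\mathcal N_\Gamma(k)}$. First I would apply Lemma~\ref{lem:bbt} to obtain, for every patch $\Omega^{(k)}$ and every edge $\Gamma^{(k,\ell)}$,
\[
|w^{(k)}|_{H^{1/2}(\Gamma^{(k,\ell)})}^2 + |w^{(k)}|_{L^0_\infty(\Gamma^{(k,\ell)})}^2
\lesssim
|u^{(k)}|_{H^{1/2}(\Gamma^{(k,\ell)})}^2 + |u^{(\ell,k)}|_{H^{1/2}(\Gamma^{(k,\ell)})}^2
+ |u^{(k)}|_{L^0_\infty(\Gamma^{(k,\ell)})}^2 + |u^{(\ell,k)}|_{L^0_\infty(\Gamma^{(k,\ell)})}^2
+ \sum_{i=1}^2 \Delta^{(k,\ell,i)}.
\]

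Next I would bound the two foreign terms. Applying Lemma~\ref{lem:hhalf:on:artif:edge} with the roles of $k$ and $\ell$ exchanged and using $\Gamma^{(\ell,k)}=\Gamma^{(k,\ell)}$ and $h_{\ell k}=h_{k\ell}$ gives $|u^{(\ell,k)}|_{H^{1/2}(\Gamma^{(k,\ell)})}^2 \lesssim |u^{(\ell)}|_{H^{1/2}(\Gamma^{(k,\ell)})}^2 + \tfrac{p^2}{h_{k\ell}}\|u^{(\ell,k)}-u^{(\ell)}\|_{L_2(\Gamma^{(k,\ell)})}^2$, and Lemma~\ref{lem:linf:on:artif:edge} gives the analogous bound $|u^{(\ell,k)}|_{L^0_\infty(\Gamma^{(k,\ell)})}^2 \lesssim |u^{(\ell)}|_{L^0_\infty(\Gamma^{(k,\ell)})}^2 + |u^{(\ell)}|_{H^{1/2}(\Gamma^{(k,\ell)})}^2 + \tfrac{p^2}{h_{k\ell}}\|u^{(\ell,k)}-u^{(\ell)}\|_{L_2(\Gamma^{(k,\ell)})}^2$. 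Now I would sum these estimates over all $k$ and all $\ell\in\mathcal N_\Gamma(k)$. Because the pair $(k,\ell)$ and the pair $(\ell,k)$ refer to the same edge, the substitution $k\leftrightarrow\ell$ shows that $\sum_k\sum_\ell |u^{(\ell)}|_{H^{1/2}(\Gamma^{(k,\ell)})}^2=\sum_k\sum_\ell |u^{(k)}|_{H^{1/2}(\Gamma^{(k,\ell)})}^2$, and likewise for the $L^0_\infty$-seminorm and for $\sum_k\sum_\ell\tfrac{p^2}{h_{k\ell}}\|u^{(\ell,k)}-u^{(\ell)}\|_{L_2(\Gamma^{(k,\ell)})}^2=\sum_k\sum_\ell\tfrac{p^2}{h_{k\ell}}\|u^{(k,\ell)}-u^{(k)}\|_{L_2(\Gamma^{(k,\ell)})}^2$; all of these are already of the form appearing on the right-hand side of the claim.

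It remains to handle $\sum_k\sum_{\ell\in\mathcal N_\Gamma(k)}\sum_{i=1}^2\Delta^{(k,\ell,i)}$. For Alg.~A and~C this is zero by~\eqref{eq:def:delta} and the proof is complete. For Alg.~B I would first use \cite[Lemma 4.17]{SchneckenleitnerTakacs:2019} to replace each $\Delta^{(k,\ell,i)}$ by a sum of corner-jump terms between pairs of patches that genuinely share an edge, the number of summands being bounded by Assumption~\ref{ass:neighbors}. Since $u\in\widetilde W$, every relevant pair $\Omega^{(k')},\Omega^{(j')}$ satisfies the zero-average condition $\int_{\Gamma^{(k',j')}}(u^{(k')}-u^{(j',k')})\,\mathrm ds=0$ required by Lemma~\ref{lem:corner estimate} (for Alg.~B this is precisely the defining property of $\widetilde W$), so Lemma~\ref{lem:corner estimate} bounds each such term by $\Lambda\big(|\mathcal H_h^{(k')}u^{(k')}|_{H^1(\Omega^{(k')})}^2+|\mathcal H_h^{(j')}u^{(j')}|_{H^1(\Omega^{(j')})}^2\big)+\tfrac{p^2}{h_{k'j'}}\|u^{(j')}-u^{(j',k')}\|_{L_2(\Gamma^{(k',j')})}^2$. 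Summing and again using the $k\leftrightarrow\ell$ symmetry, the last family of terms is absorbed into $\sum_k\sum_\ell\tfrac{p^2}{h_{k\ell}}\|u^{(k,\ell)}-u^{(k)}\|_{L_2(\Gamma^{(k,\ell)})}^2$, while the $\Lambda$-terms collapse to $\Lambda\sum_{k=1}^K|\mathcal H_h^{(k)}u^{(k)}|_{H^1(\Omega^{(k)})}^2$ because, by Assumption~\ref{ass:neighbors} (and since a patch has at most four edges), each patch index enters the sum only a uniformly bounded number of times. Collecting all contributions yields the asserted estimate.

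The main obstacle I anticipate is bookkeeping rather than analysis: one has to check carefully that the symmetrisation $k\leftrightarrow\ell$ really reproduces exactly the terms on the right-hand side and, in particular, that every artificial-edge jump term produced along the way — by Lemmas~\ref{lem:hhalf:on:artif:edge}, \ref{lem:linf:on:artif:edge} and~\ref{lem:corner estimate} — can be matched with the single family $\tfrac{p^2}{h_{k\ell}}\|u^{(k,\ell)}-u^{(k)}\|_{L_2(\Gamma^{(k,\ell)})}^2$ in the claim. The only genuinely substantive inputs are Lemma~\ref{lem:corner estimate} together with \cite[Lemma 4.17]{SchneckenleitnerTakacs:2019}, and there the delicate point is the verification of the zero edge-average hypothesis, which is available exactly because $u\in\widetilde W$ for Alg.~B.
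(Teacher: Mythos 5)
Your proposal is correct and follows essentially the same route as the paper: Lemma~\ref{lem:bbt} on each edge, then Lemmas~\ref{lem:hhalf:on:artif:edge} and~\ref{lem:linf:on:artif:edge} together with the symmetry $\ell\in\mathcal N_\Gamma(k)\Leftrightarrow k\in\mathcal N_\Gamma(\ell)$ to return the artificial-edge seminorms to $u^{(k)}$-terms, and finally $\Delta^{(k,\ell,i)}=0$ for Alg.~A and C, respectively Lemma~\ref{lem:corner estimate} with the bounded number of neighbors for Alg.~B. Your extra explicit appeal to \cite[Lemma 4.17]{SchneckenleitnerTakacs:2019} for unwinding the sum over $j$ in $\Delta^{(k,\ell,i)}$, and your verification of the zero edge-average hypothesis from the definition of $\widetilde W$ for Alg.~B, merely spell out what the paper leaves implicit in its closing remark.
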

\begin{proof}
Lemma~\ref{lem:bbt} yields
\[
\begin{aligned}
	&\sum_{k=1}^K \sum_{\ell\in \mathcal{N}_\Gamma(k)}
		\left(
			|w^{(k)}|_{H^{1/2}(\Gamma^{(k,\ell)})}^2
			+ |w^{(k)}|^2_{L^0_{\infty}(\Gamma^{(k,\ell)})}
		\right)
		\\
	&  \; \lesssim
	\sum_{k=1}^K\sum_{\ell\in \mathcal{N}_\Gamma(k)}
	\left(
	  |u^{(k)}|_{H^{1/2}(\Gamma^{(k,\ell)})}^2
	  +
	  |u^{(\ell,k)}|_{H^{1/2}(\Gamma^{(k,\ell)})}^2
	  +
	  |u^{(k)}|_{L^0_\infty(\Gamma^{(k,\ell)})}^2
	  +
	  |u^{(\ell,k)}|_{L^0_\infty(\Gamma^{(k,\ell)})}^2
	  \right)
	\\ & \;\qquad
	+  \sum_{k=1}^K\sum_{\ell\in \mathcal{N}_\Gamma(k)} \sum_{i=1}^2 \Delta^{(k,\ell,i)} .
\end{aligned}
\]	  
  The Lemmas~\ref{lem:hhalf:on:artif:edge} and \ref{lem:linf:on:artif:edge}
  and the observation $\ell\in\mathcal{N}_{\Gamma}(k) \Leftrightarrow k\in\mathcal{N}_{\Gamma}(\ell)$ yield further
\[
\begin{aligned}
	&\sum_{k=1}^K \sum_{\ell\in \mathcal{N}_\Gamma(k)}
		\left(
			|w^{(k)}|_{H^{1/2}(\Gamma^{(k,\ell)})}^2
			+ |w^{(k)}|^2_{L^0_{\infty}(\Gamma^{(k,\ell)})}
		\right)
		\\
	&  \qquad \lesssim
	\sum_{k=1}^K\sum_{\ell\in \mathcal{N}_\Gamma(k)}
	\left(
	  |u^{(k)}|_{H^{1/2}(\Gamma^{(k,\ell)})}^2
	  +
	  |u^{(k)}|_{L^0_\infty(\Gamma^{(k,\ell)})}^2
	  +
	   \frac{p^2}{h_{k\ell}} \| u^{(\ell,k)} - u^{(\ell)} \|_{L_2(\Gamma^{(k,\ell)})}^2
	   \right)
	\\ & \qquad\qquad
	+  \sum_{k=1}^K\sum_{\ell\in \mathcal{N}_\Gamma(k)} \sum_{i=1}^2 \Delta^{(k,\ell,i)} .
\end{aligned}
\]	  
This finishes the proof for Alg.~A and C, since $\Delta^{(k,\ell,i)}=0$. For
Alg.~B, the desired result is a consequence of 
Lemma~\ref{lem:corner estimate} and $|\mathcal{N}_\Gamma(k)|\le 4$.
\end{proof}

\begin{lemma}\label{lem:second}
	Let $u$ and $w$ be as in Lemma~\ref{lem:bbt}. Then, we have
	\begin{align*}
	&\sum_{k=1}^{K}\sum_{\ell \in \mathcal N_\Gamma(k)} \frac{\delta p^2}{h_{k\ell}} 
	\| w^{(k)} - w^{(k,\ell)} \|_{L_2(\Gamma^{(k,\ell)})}^{2} \\
	&\qquad  \lesssim
	\Sigma 
	\sum_{k=1}^{K}\sum_{\ell \in \mathcal N_\Gamma(k)}\left(
	\frac{\delta p^2}{h_{k\ell}}\| u^{(k)} - u^{(k,\ell)} \|_{L_2(\Gamma^{(k,\ell)})}^{2}
	+ \Lambda |\mathcal{H}_h^{(k)} u^{(k)}|_{H^1(\Omega^{(k)})}^2 
	 \right),
	\end{align*}
	where 
	\begin{equation}\label{eq:sigmadef}
		\Sigma
		:=
		\begin{cases}
			1 & \mbox{for Alg.~A and C}\\
			\delta p \max_{k=1,\ldots,K}\max_{\ell\in\mathcal{N}_{\Gamma}(k)} \frac{h_{k}}{h_{\ell}} & \mbox{for Alg.~B}
		\end{cases}.
	\end{equation}
\end{lemma}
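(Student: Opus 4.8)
The plan is to start from the pointwise $L_2$-estimate for $\|w^{(k)}-w^{(k,\ell)}\|_{L_2(\Gamma^{(k,\ell)})}^2$ provided by Lemma~\ref{lem:bbt}, multiply it by $\delta p^2/h_{k\ell}$, and sum over all patches $k$ and all $\ell\in\mathcal{N}_\Gamma(k)$. The first two terms on the right-hand side of that estimate are $\|u^{(k)}-u^{(k,\ell)}\|_{L_2(\Gamma^{(k,\ell)})}^2$ and $\|u^{(\ell)}-u^{(\ell,k)}\|_{L_2(\Gamma^{(k,\ell)})}^2$. Since the neighbour relation is symmetric ($\ell\in\mathcal{N}_\Gamma(k)\Leftrightarrow k\in\mathcal{N}_\Gamma(\ell)$) and $h_{k\ell}=h_{\ell k}$, $\Gamma^{(k,\ell)}=\Gamma^{(\ell,k)}$, after summation the second term relabels into a copy of the first, so both contribute a multiple of $\sum_{k}\sum_{\ell\in\mathcal{N}_\Gamma(k)}\frac{\delta p^2}{h_{k\ell}}\|u^{(k)}-u^{(k,\ell)}\|_{L_2(\Gamma^{(k,\ell)})}^2$, which I would absorb into the first summand on the right-hand side of the claim (using that $\Sigma\ge\delta^*>0$ with $\delta^*$ depending only on $C_1$, so a factor $1\le\Sigma/\delta^*$ may be inserted). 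For Alg.~A and C this is already the full proof, since $\Delta^{(k,\ell,i)}=0$ and no harmonic-extension term is needed.

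For Alg.~B it then remains to handle the corner contributions, i.e.\ the terms $\frac{\delta p^2}{h_{k\ell}}\sum_{i=1}^2\bigl(\frac{h_k}{p}\Delta^{(k,\ell,i)}+\frac{h_\ell}{p}\Delta^{(\ell,k,i)}\bigr)$. By the same symmetry the two summands in the bracket contribute equally after summing over $k$ and $\ell$, so it suffices to bound $\sum_{k}\sum_{\ell\in\mathcal{N}_\Gamma(k)}\sum_{i=1}^2\frac{\delta p\,h_k}{h_{k\ell}}\Delta^{(k,\ell,i)}$. Since $h_{k\ell}=\min\{h_k,h_\ell\}$, one has $h_k/h_{k\ell}\le\max\{1,h_k/h_\ell\}\le\max_{j}\max_{m\in\mathcal{N}_\Gamma(j)}h_j/h_m$, hence $\delta p\,h_k/h_{k\ell}\le\Sigma$. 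I would then apply Lemma~\ref{lem:corner estimate} to each $\Delta^{(k,\ell,i)}$; its hypothesis $\int_{\Gamma^{(k,\ell)}}(u^{(k)}-u^{(\ell,k)})\,\mathrm{d}s=0$ is precisely the constraint defining $\widetilde W$ for Alg.~B, hence available for $u\in\widetilde W$.

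Lemma~\ref{lem:corner estimate} bounds $\Delta^{(k,\ell,i)}$ by $\Lambda\bigl(|\mathcal{H}_h^{(k)}u^{(k)}|_{H^1(\Omega^{(k)})}^2+|\mathcal{H}_h^{(\ell)}u^{(\ell)}|_{H^1(\Omega^{(\ell)})}^2\bigr)+\frac{p^2}{h_{k\ell}}\|u^{(\ell)}-u^{(\ell,k)}\|_{L_2(\Gamma^{(k,\ell)})}^2$. For the harmonic-extension part, using $\delta p\,h_k/h_{k\ell}\le\Sigma$, summing, and invoking $|\mathcal{N}_\Gamma(k)|\le4$ together with neighbour symmetry gives $\lesssim\Sigma\,\Lambda\sum_{k}|\mathcal{H}_h^{(k)}u^{(k)}|_{H^1(\Omega^{(k)})}^2$, which is equivalent to the $\Sigma\,\Lambda$-weighted harmonic-extension sum in the claim. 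For the remaining part one writes $\frac{\delta p\,h_k}{h_{k\ell}}\cdot\frac{p^2}{h_{k\ell}}=\frac{\delta p^2}{h_{k\ell}}\cdot\frac{p\,h_k}{h_{k\ell}}$ and uses $\frac{p\,h_k}{h_{k\ell}}\le\frac1{\delta^*}\,\Sigma$ (again since $\Sigma\ge\delta^*$ depends only on $C_1$), so that after relabelling $k\leftrightarrow\ell$ this term is absorbed into $\Sigma\sum_{k}\sum_{\ell\in\mathcal{N}_\Gamma(k)}\frac{\delta p^2}{h_{k\ell}}\|u^{(k)}-u^{(k,\ell)}\|_{L_2(\Gamma^{(k,\ell)})}^2$, completing the estimate.

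The main difficulty is not a single inequality but the bookkeeping: one must ensure that every grid-size ratio $h_k/h_{k\ell}$ appearing is correctly matched against the single factor $\Sigma$ of the statement and that no stray power of $p$ or of $\delta$ is left over; the one genuinely non-trivial input is Lemma~\ref{lem:corner estimate}, whose edge-average hypothesis makes it necessary to use the Alg.~B primal constraint before it can be applied.
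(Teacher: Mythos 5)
Your proposal is correct and follows essentially the same route as the paper: apply Lemma~\ref{lem:bbt} and the symmetry $\ell\in\mathcal N_\Gamma(k)\Leftrightarrow k\in\mathcal N_\Gamma(\ell)$ to reduce everything to the $\|u^{(k)}-u^{(k,\ell)}\|_{L_2}$-terms plus (for Alg.~B) the corner terms, then invoke Lemma~\ref{lem:corner estimate} (whose edge-average hypothesis is exactly the Alg.~B primal constraint) and absorb the ratios $h_k/h_{k\ell}\le\max_j\max_{m}h_j/h_m$ and the factors $1,\,1/\delta\lesssim 1$ into $\Sigma$. Your bookkeeping of the powers of $p$, $\delta$ and the grid-size ratios matches the paper's (terser) argument, so there is nothing to correct.
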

\begin{proof}
	We start with Alg.~A and C. Lemma~\ref{lem:bbt} 
	and the observation that $\ell\in\mathcal{N}_\Gamma(k)\Leftrightarrow
	k\in \mathcal{N}_\Gamma(\ell)$ yield immediately
	\begin{align*}
	&\sum_{k=1}^{K}\sum_{\ell \in \mathcal N_\Gamma(k)} \frac{\delta p^2}{h_{k\ell}} 
	\| w^{(k)} - w^{(k,\ell)} \|_{L_2(\Gamma^{(k,\ell)})}^{2} \\
	&\qquad\lesssim 
	\sum_{k=1}^{K}\sum_{\ell \in \mathcal N_\Gamma(k)}\frac{\delta p^2}{h_{k\ell}}\left(
	\| u^{(k)} - u^{(k,\ell)} \|_{L_2(\Gamma^{(k,\ell)})}^{2}
	+ \sum_{i=1}^2  \frac{h_{k}}{p} \Delta^{(k,\ell,i)}
	 \right).
	\end{align*}
	Since $\Delta^{(k,\ell,i)}=0$ for Alg.~A and C, this finishes the proof
	for this case. Now, consider the case of Alg.~B. Lemma~\ref{lem:corner estimate}
	and the observation that $\ell\in\mathcal{N}_\Gamma(k)\Leftrightarrow
	k\in \mathcal{N}_\Gamma(\ell)$ yield
	\begin{align*}
	&\sum_{k=1}^{K}\sum_{\ell \in \mathcal N_\Gamma(k)} \frac{\delta p^2}{h_{k\ell}} 
	\| w^{(k)} - w^{(k,\ell)} \|_{L_2(\Gamma^{(k,\ell)})}^{2} \\
	&\quad\lesssim 
	\sum_{k=1}^{K}\sum_{\ell \in \mathcal N_\Gamma(k)}\frac{\delta p^2}{h_{k\ell}}\Big(
	\| u^{(k)} - u^{(k,\ell)} \|_{L_2(\Gamma^{(k,\ell)})}^{2}
	 \\ & \qquad\qquad + 
	\Lambda \frac{h_{k} + h_{\ell}}{p} |\mathcal{H}_h^{(k)} u^{(k)}|_{H^1(\Omega^{(k)})}^2 
		+ \frac{p h_{k}}{h_{k\ell}} 
					\| u^{(\ell)} - u^{(\ell, k)} \|_{L_2(\Gamma^{(k,\ell)})}^2
	 \Big).
	\end{align*}
	Using $h_{k\ell}=\min\{h_{k},h_{\ell}\}$
	and $\ell\in\mathcal{N}_\Gamma(k)\Leftrightarrow
	k\in \mathcal{N}_\Gamma(\ell)$,
	we immediately obtain the desired bound for Alg.~B.
\end{proof}

Now we are able to prove the bound for the condition number of the preconditioned dG IETI method as stated in Theorem~\ref{thrm:fin}.
\begin{proof}[Proof of Theorem~\ref{thrm:fin}]
	The idea of the proof is to use \cite[Theorem~22]{MandelDohrmannTezaur:2005a},
	which states that 
	\begin{equation}\label{eq:MandelDohrmannTezaur}
			\kappa(M_{\mathrm{sD}} \, F) \le \sup_{u \in  \widetilde W }
			\frac{ \| B_D^\top B \underline u \|_S^2 }{ \| \underline u \|_S^2 },
	\end{equation}
	where $\underline u$ is the coefficient vector associated
	to the function $u=(u_e^{(1)},\cdots,u_e^{(K)})
	=((u^{(1)},(u^{(1,\ell)})_{\ell\in\mathcal N_\Gamma(k)}),\cdots)$.
	So, let $u$ be arbitrary but fixed
	and let the function $w=(w_e^{(1)},\cdots,w_e^{(K)})
	=((w^{(1)},(w^{(1,\ell)})_{\ell\in\mathcal N_\Gamma(k)}),\cdots)$
	with coefficient vector $\underline w$ be such that
	$
			\underline w = B_D^\top B \underline u
	$.
	Moreover, let $v_e^{(k)} \in V_e^{(k)}$ be an arbitrary extension of $w_e^{(k)}$.  Lemma~\ref{lem:patchwise equ} yields 
	\[
	\begin{aligned}
			&\| B_D^\top B \underline u \|_S^2
			   = \|  \underline w \|_S^2
			   = \sum_{k=1}^K \|  \mathcal H_A^{(k)} w_e^{(k)} \|_{a_e^{(k)}}^2 \\
			   &\quad= \sum_{k=1}^{K} \inf_{w_0^{(k)} \in V^{(k)}_0}\| v_e^{(k)} - (w_0^{(k)},0^{|\mathcal N_\Gamma(k)|})\|_{a_e^{(k)}}^2
			   \eqsim
			   \sum_{k=1}^{K}\inf_{w_0^{(k)} \in V^{(k)}_0}\| v_e^{(k)} - (w_0^{(k)},0^{|\mathcal N_\Gamma(k)|})\|_{d_e^{(k)}}^2 \\
			   &\quad  = 
			   \sum_{k=1}^{K}\inf_{w_0^{(k)} \in V^{(k)}_0}
			   \left( | v^{(k)} - w_0^{(k)} |_{H^1(\Omega^{(k)})}^2  
			   +
			   \frac{\delta p^2}{h_{k\ell}} \sum_{\ell \in \mathcal N_\Gamma(k)}  
			   \| w^{(k)} -w^{(k,\ell)}  \|_{L_2(\Gamma^{(k,\ell)})}^{2}
			   \right).
	\end{aligned}
	\]
	Note that the second sum does not depend on $w_0^{(k)}$. Thus, the infimum
	refers only to the $H^1$-seminorm, which means that that seminorm coincides
	with the seminorm of the (standard) discrete harmonic extension.
	Hence, we have
	\begin{equation}\label{eq:thetwosums}
	\begin{aligned}
			   \| B_D^\top B \underline u \|_S^2
			   \eqsim \sum_{k=1}^{K} | \mathcal{H}_h^{(k)} w^{(k)} |_{H^{1}(\Omega^{(k)})}^2
			   + \sum_{k=1}^{K} \sum_{\ell\in\mathcal{N}_\Gamma(k)}
			   \frac{\delta p^2}{h_{kl}}
			   \| w^{(k)} -w^{(k,\ell)} \|_{L_2(\Gamma^{(k,\ell)})}^2.
	\end{aligned}
	\end{equation}
	First, we estimate the first sum in~\eqref{eq:thetwosums}.
	\cite[Theorem 4.2]{SchneckenleitnerTakacs:2019} yields
	\[
		\sum_{k=1}^{K} |\mathcal{H}_h^{(k)} w^{(k)} |_{H^1(\Omega^{(k)})}^2
		\lesssim
		p\sum_{k=1}^{K} | w^{(k)} |_{H^{1/2}(\partial \Omega^{(k)})}^2.
	\]
	Using \cite[Lemma 4.15]{SchneckenleitnerTakacs:2019}, we get
	\[
	\begin{aligned}
		\sum_{k=1}^{K} |\mathcal{H}_h^{(k)} w^{(k)} |_{H^1(\Omega^{(k)})}^2 
	&	\lesssim
		p\sum_{k=1}^{K} \sum_{\ell \in \mathcal N_\Gamma(k)} \left( | w^{(k)} |_{H^{1/2}(\Gamma^{(k,\ell)})}^2 + 
		\Lambda
		|w^{(k)}|_{L_\infty^0(\Gamma^{(k,\ell)})}^2 
		\right).
	\end{aligned}	
	\]
	Using $\Lambda\ge1$ and Lemma~\ref{lem:4:10}, we obtain further
	\[
	\begin{aligned}
		&\sum_{k=1}^{K} |\mathcal{H}_h^{(k)} w^{(k)} |_{H^1(\Omega^{(k)})}^2 
		\lesssim
		p  \Lambda 
		\sum_{k=1}^K\sum_{\ell\in \mathcal{N}_\Gamma(k)}
		\left( |u^{(k)}|_{H^{1/2}(\Gamma^{(k,\ell)})}^2 
		+|u^{(k)}|_{L_\infty^0(\Gamma^{(k,\ell)})}^2
		\right)\\
		&\qquad+
		p  \Lambda 
		\sum_{k=1}^K\sum_{\ell\in \mathcal{N}_\Gamma(k)}
		 \frac{p^2}{h_{k\ell}}\| u^{(k,\ell)} - u^{(k)} \|_{L_2(\Gamma^{(k,\ell)})}^2
		+ p \Lambda^2
		\sum_{k=1}^K |\mathcal{H}_h^{(k)} u^{(k)}|_{H^1(\Omega^{(k)})}^2.
	\end{aligned}
	\]
	Using \cite[Lemma~4.15 and Theorem~4.2]{SchneckenleitnerTakacs:2019}
	and $|\mathcal{N}_\Gamma(k)|\le4$, we further estimate
	\[
	\begin{aligned}
	\sum_{k=1}^{K} |\mathcal{H}_h^{(k)} w^{(k)} |_{H^1(\Omega^{(k)})}^2 
	& \lesssim p\Lambda^2
	\sum_{k=1}^K |\mathcal{H}_h^{(k)}u^{(k)}|_{H^{1}(\Omega^{(k)})}^2 
	+ p \Lambda  \sum_{k=1}^K \sum_{\ell\in \mathcal{N}_\Gamma(k)}|u^{(k)}|_{L_\infty^0(\Gamma^{(k,\ell)})}^2
	\\ & \hspace{-2cm} + p \Lambda
	\sum_{k=1}^K \sum_{\ell\in \mathcal{N}_\Gamma(k)}
	 \frac{p^2}{h_{k\ell}}\| u^{(k,\ell)} - u^{(k)} \|_{L_2(\Gamma^{(k,\ell)})}^2 
	.
	\end{aligned}
	\]
	Using \cite[Lemma~4.14]{SchneckenleitnerTakacs:2019}, we get
	further
	\[
	\begin{aligned}
	& \sum_{k=1}^{K} |\mathcal{H}_h^{(k)} w^{(k)} |_{H^1(\Omega^{(k)})}^2
	\lesssim
	p\Lambda^2
	\sum_{k=1}^K |\mathcal{H}_h^{(k)}u^{(k)}|_{H^{1}(\Omega^{(k)})}^2  \\
	& \quad 
	+ p \Lambda^2  \sum_{k=1}^K \sum_{\ell\in \mathcal{N}_\Gamma(k)}
	\left(
		|\mathcal{H}_h^{(k)}u^{(k)}|_{H^{1}(\Omega^{(k)})}^2 + (H_{k})^{-2}\inf_{c\in \mathbb R} 
		\|\mathcal{H}_h^{(k)}u^{(k)}-c\|_{L_2(\Omega^{(k)})}^2 
	\right)\\
	&\quad + p \Lambda
	\sum_{k=1}^K \sum_{\ell\in \mathcal{N}_\Gamma(k)}
	 \frac{p^2}{h_{k\ell}}\| u^{(k,\ell)} - u^{(k)} \|_{L_2(\Gamma^{(k,\ell)})}^2 
	.
	\end{aligned}
	\]
	$|\mathcal{N}_\Gamma(k)|\lesssim 1$ and the Poincar{\'e} inequality
	yield the estimate 
	\begin{equation}\label{eq:AlgA first}
	\begin{aligned}
	& \sum_{k=1}^{K} |\mathcal{H}_h^{(k)} w^{(k)} |_{H^1(\Omega^{(k)})}^2 \\
	& \quad \lesssim
	p\Lambda^2
	\sum_{k=1}^K
	\left(
	|\mathcal{H}_h^{(k)}u^{(k)}|_{H^{1}(\Omega^{(k)})}^2 
	+  \sum_{\ell\in \mathcal{N}_\Gamma(k)}
	 \frac{p^2}{h_{k\ell}}\| u^{(k,\ell)} - u^{(k)} \|_{L_2(\Gamma^{(k,\ell)})}^2
	\right).
	\end{aligned}
	\end{equation}
	To estimate the second sum in~\eqref{eq:thetwosums},
	we use Lemma~\ref{lem:second} and obtain
	\begin{equation}\label{eq:AlgA second}
	\begin{aligned}
	&\sum_{k=1}^{K}\sum_{\ell \in \mathcal N_\Gamma(k)} \frac{\delta p^2}{h_{k\ell}} 
	\| w^{(k)} - w^{(k,\ell)} \|_{L_2(\Gamma^{(k,\ell)})}^{2} \\
	&\qquad  \lesssim
	\Sigma 
	\sum_{k=1}^{K}\sum_{\ell \in \mathcal N_\Gamma(k)}\left(
	\frac{\delta p^2}{h_{k\ell}}\| u^{(k)} - u^{(k,\ell)} \|_{L_2(\Gamma^{(k,\ell)})}^{2}
	+ \Lambda |\mathcal{H}_h^{(k)} u^{(k)}|_{H^1(\Omega^{(k)})}^2 
	 \right),
	\end{aligned}
	\end{equation}
	where $\Sigma$ is as in~\eqref{eq:sigmadef}.
	The combination of $\delta\gtrsim 1$, $\Lambda\ge 1$, $\Sigma \ge 1$,\eqref{eq:thetwosums}, \eqref{eq:AlgA first} and
	\eqref{eq:AlgA second} yields
	\[
	\begin{aligned}
		\| B_D^\top B \underline u \|_S^2 &\lesssim
		(p \Lambda^2+\Sigma \Lambda) \sum_{k=1}^{K} \left(
			| \mathcal{H}_h^{(k)} u^{(k)} |_{H^1(\Omega^{(k)})}^2 + 
			\sum_{\ell \in \mathcal N_\Gamma(k)}\frac{\delta p^2}{h_{k\ell}}
			\| u^{(k)} - u^{(k,\ell)} \|_{L_2(\Gamma^{(k,\ell)})}^{2} 
		\right) \\
		&= (p \Lambda^2+\Sigma \Lambda) \| u \|_{d}^2 
		\eqsim (p \Lambda^2+\Sigma \Lambda) \| \underline{u} \|_{S}^2 
		\lesssim \Sigma \Lambda^2 \| \underline{u} \|_{S}^2. 
	\end{aligned}
	\]
	The combination of this estimate and~\eqref{eq:MandelDohrmannTezaur}
	finishes the proof.
\end{proof}

\section{Numerical results}
\label{sec:5}
In this section, we present the results of our numerical experiments that
illustrate the presented convergence theory.
We consider the Poisson problem 
\[
\begin{aligned}
		- \Delta u(x,y) & = 2\pi^2 \sin(\pi x)\sin(\pi y) &&\qquad \mbox{for}\quad (x,y)\in\Omega \\
				  u & = 0 &&\qquad \mbox{on}\quad \partial\Omega,
\end{aligned}
\]
where the computational domain $\Omega$ is one of the domains depicted in
Figure~\ref{fig:computational domains}.

\begin{figure}[h]
	\centering
	\begin{subfigure}{.27\textwidth}
		\centering
		\includegraphics[width = 4.2cm, height = 4.2cm]{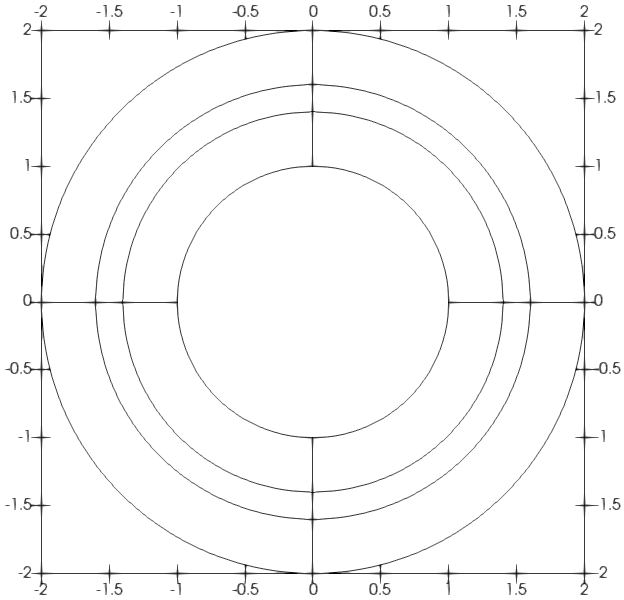}
		{\tiny \caption{Circular ring}\label{subfig:Rings}}
	\end{subfigure} 
	\hspace{0.5cm}
	\begin{subfigure}{.27\textwidth}
		\centering
		\includegraphics[height = 4.2cm]{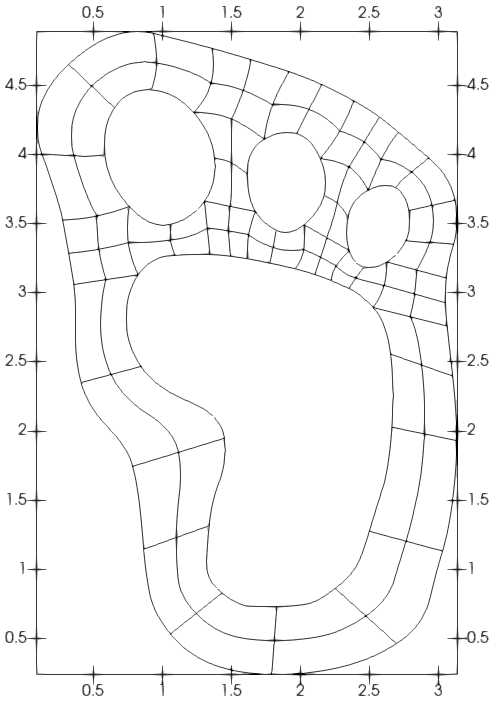}
		{\tiny \caption{Yeti-footprint}
		\label{subfig:Yeti}}
	\end{subfigure} 
	\caption{Computational domains and the decomposition into patches}
	\label{fig:computational domains}
\end{figure}

The first domain (Figure~\ref{subfig:Rings})
is a circular ring consisting of 12 patches. Each of them is the image
of a NURBS mapping of degree 2. The second domain (Figure~\ref{subfig:Yeti})
is the Yeti-footprint which is composed of 84 patches. In this case, all of
the patches are parameterized using B-spline functions, again of degree 2.

\begin{table}[t]
	\newcolumntype{L}[1]{>{\raggedleft\arraybackslash\hspace{-1em}}m{#1}}
	\centering
	\renewcommand{\arraystretch}{1.25}
	\begin{tabular}{l|L{1em}L{1.8em}|L{1em}L{1.8em}|L{1em}L{1.8em}|L{1em}L{1.8em}|L{1em}L{1.8em}|L{1em}L{1.8em}|L{1em}L{1.8em}}
		\toprule
	      \multicolumn{1}{l}{r$\;\;\diagdown\;\;$p\hspace{-1.8em}\;}
		& \multicolumn{2}{c|}{2}
		& \multicolumn{2}{c|}{3}
		& \multicolumn{2}{c|}{4}
		& \multicolumn{2}{c|}{5}
		& \multicolumn{2}{c|}{6}
		& \multicolumn{2}{c|}{7}
		& \multicolumn{2}{c}{8} \\
		& it & $\kappa$
		& it & $\kappa$
		& it & $\kappa$
		& it & $\kappa$
		& it & $\kappa$
		& it & $\kappa$
		& it & $\kappa$ \\
		\midrule
		$2$  & $10$ & $5.65$ & $12$ & $6.23$ & $12$ & $6.53$ & $13$ & $7.06$ & $13$ & $7.28$ & $14$ & $7.77$ & $14$ & $7.93$ \\
		$3$  & $12$ & $6.29$ & $12$ & $6.93$ & $13$ & $7.44$ & $13$ & $7.90$ & $14$ & $8.33$ & $14$ & $8.67$ & $15$ & $9.03$ \\
		$4$  & $13$ & $7.04$ & $13$ & $7.88$ & $14$ & $8.53$ & $14$ & $9.09$ & $15$ & $9.59$ & $15$ & $10.00$ & $15$ & $10.37$ \\
		$5$  & $13$ & $8.00$ & $14$ & $9.06$ & $15$ & $9.82$ & $15$ & $10.47$ & $16$ & $11.03$ & $16$ & $11.49$ & $16$ & $11.91$ \\
		$6$  & $15$ & $9.19$ & $16$ & $10.42$ & $16$ & $11.29$ & $17$ & $12.02$ & $16$ & $12.64$ & $18$ & $13.16$ & $17$ & $13.63$ \\
		$7$  & $16$ & $10.58$ & $17$ & $11.97$ & $18$ & $12.94$ & $18$ & $13.74$ & $18$ & $14.43$ & $18$ & $15.01$ & $17$ & $15.52$ \\
		$8$  & $17$ & $12.14$ & $18$ & $13.69$ & $18$ & $14.76$ & $19$ & $15.65$ & $19$ & $16.40$ & $19$ & $17.02$ & $19$ & $17.58$ \\
		\bottomrule
	\end{tabular}
	\captionof{table}{Iteration counts (it) and condition numbers ($\kappa$); Alg.~A; triple ring
	\label{tab:TripleRing_vertex}}
\end{table}
\begin{table}[t]
	\newcolumntype{L}[1]{>{\raggedleft\arraybackslash\hspace{-1em}}m{#1}}
	\centering
	\renewcommand{\arraystretch}{1.25}
	\resizebox{\columnwidth}{!}{
	\begin{tabular}{l|L{1em}L{1.8em}|L{1em}L{1.8em}|L{1em}L{1.8em}|L{1em}L{1.8em}|L{1em}L{1.8em}|L{1em}L{1.8em}|L{1em}L{1.8em}}
		\toprule
	      \multicolumn{1}{l}{r$\;\;\diagdown\;\;$p\hspace{-1.8em}\;}
		& \multicolumn{2}{c|}{2}
		& \multicolumn{2}{c|}{3}
		& \multicolumn{2}{c|}{4}
		& \multicolumn{2}{c|}{5}
		& \multicolumn{2}{c|}{6}
		& \multicolumn{2}{c|}{7}
		& \multicolumn{2}{c}{8} \\
		& it & $\kappa$
		& it & $\kappa$
		& it & $\kappa$
		& it & $\kappa$
		& it & $\kappa$
		& it & $\kappa$
		& it & $\kappa$ \\
		\midrule
		$2$  & $36$ & $275$ & $43$ & $ 348$ & $50$ & $414$ & $49$ & $472$ & $52$ & $547$ & $52$ & $618$ & $56$ & $727$ \\
		$3$  & $38$ & $294$ & $44$ & $361$ & $48$ & $433$ & $50$ & $521$ & $52$ & $571$ & $54$ & $678$ & $55$ & $739$ \\
		$4$  & $44$ & $311$ & $49$ & $396$ & $49$ & $452$ & $51$ & $544$ & $52$ & $605$ & $55$ & $700$ & $55$ & $764$ \\
		$5$  & $48$ & $342$ & $47$ & $401$ & $50$ & $477$ & $51$ & $567$ & $53$ & $649$ & $55$ & $723$ & $57$ & $830$ \\
		$6$  & $50$ & $357$ & $50$ & $425$ & $52$ & $510$ & $54$ & $585$ & $55$ & $681$ & $56$ & $759$ & $58$ & $866$ \\
		$7$  & $49$ & $379$ & $52$ & $461$ & $55$ & $554$ & $57$ & $628$ & $58$ & $708$ & $60$ & $811$ & $62$ & $903$ \\
		$8$  & $53$ & $404$ & $53$ & $474$ & $56$ & $577$ & $59$ & $677$ & $59$ & $753$ & $64$ & $878$ & $65$ & $978$ \\
		\bottomrule
	\end{tabular}}
	\captionof{table}{Iteration counts (it) and condition numbers ($\kappa$); Alg.~B; triple ring
		\label{tab:TripleRing_edge}}
\end{table}
\begin{table}[t]
	\newcolumntype{L}[1]{>{\raggedleft\arraybackslash\hspace{-1em}}m{#1}}
	\centering
	\renewcommand{\arraystretch}{1.25}
	\begin{tabular}{l|L{1em}L{1.8em}|L{1em}L{1.8em}|L{1em}L{1.8em}|L{1em}L{1.8em}|L{1em}L{1.8em}|L{1em}L{1.8em}|L{1em}L{1.8em}}
		\toprule
	      \multicolumn{1}{l}{r$\;\;\diagdown\;\;$p\hspace{-1.8em}\;}
		& \multicolumn{2}{c|}{2}
		& \multicolumn{2}{c|}{3}
		& \multicolumn{2}{c|}{4}
		& \multicolumn{2}{c|}{5}
		& \multicolumn{2}{c|}{6}
		& \multicolumn{2}{c|}{7}
		& \multicolumn{2}{c}{8} \\
		& it & $\kappa$
		& it & $\kappa$
		& it & $\kappa$
		& it & $\kappa$
		& it & $\kappa$
		& it & $\kappa$
		& it & $\kappa$ \\
		\midrule
		$2$  & $7$ & $1.60$ & $10$ & $2.25$ & $10$ & $2.45$ & $11$ & $2.55$ & $11$ & $2.80$ & $12$ & $2.89$ & $12$ & $3.13$ \\
		$3$  & $10$ & $2.28$ & $10$ & $2.56$ & $11$ & $2.81$ & $11$ & $3.04$ & $11$ & $3.24$ & $12$ & $3.43$ & $12$ & $3.58$ \\
		$4$  & $11$ & $2.62$ & $11$ & $3.03$ & $12$ & $3.35$ & $11$ & $3.63$ & $12$ & $3.88$ & $13$ & $4.08$ & $13$ & $4.27$ \\
		$5$  & $11$ & $3.09$ & $12$ & $3.62$ & $12$ & $4.00$ & $13$ & $4.32$ & $13$ & $4.59$ & $14$ & $4.81$ & $14$ & $5.01$ \\
		$6$  & $12$ & $3.69$ & $13$ & $4.30$ & $14$ & $4.72$ & $14$ & $5.07$ & $14$ & $5.36$ & $15$ & $5.61$ & $15$ & $5.82$ \\
		$7$  & $13$ & $4.37$ & $14$ & $5.04$ & $15$ & $5.50$ & $15$ & $5.88$ & $16$ & $6.20$ & $16$ & $6.46$ & $16$ & $6.69$ \\
		$8$  & $14$ & $5.13$ & $15$ & $5.86$ & $16$ & $6.35$ & $16$ & $6.75$ & $17$ & $7.09$ & $17$ & $7.37$ & $17$ & $7.61$ \\
		\bottomrule
	\end{tabular}
	\captionof{table}{Iteration counts (it) and condition numbers ($\kappa$); Alg.~C; triple ring
		\label{tab:TripleRing_vertexEdge}}
\end{table}

\begin{table}[t]
	\newcolumntype{L}[1]{>{\raggedleft\arraybackslash\hspace{-1em}}m{#1}}
	\centering
	\renewcommand{\arraystretch}{1.25}
	\begin{tabular}{l|L{1em}L{1.8em}|L{1em}L{1.8em}|L{1em}L{1.8em}|L{1em}L{1.8em}|L{1em}L{1.8em}|L{1em}L{1.8em}|L{1em}L{1.8em}}
		\toprule
	      \multicolumn{1}{l}{r$\;\;\diagdown\;\;$p\hspace{-1.8em}\;}
		& \multicolumn{2}{c|}{2}
		& \multicolumn{2}{c|}{3}
		& \multicolumn{2}{c|}{4}
		& \multicolumn{2}{c|}{5}
		& \multicolumn{2}{c|}{6}
		& \multicolumn{2}{c|}{7}
		& \multicolumn{2}{c}{8}  \\
		& it & $\kappa$
		& it & $\kappa$
		& it & $\kappa$
		& it & $\kappa$
		& it & $\kappa$
		& it & $\kappa$
		& it & $\kappa$  \\
		\midrule
		$1$  & $10$ & $2.09$ & $12$ & $2.73$ & $14$ & $3.27$ & $15$ & $3.80$ & $16$ & $4.20$ & $18$ & $4.67$ & $19$ & $5.00$ \\
		$2$  & $12$ & $2.82$ & $14$ & $3.56$ & $15$ & $4.11$ & $16$ & $4.60$ & $17$ & $5.01$ & $19$ & $5.43$ & $20$ & $5.77$ \\
		$3$  & $14$ & $3.77$ & $16$ & $4.61$ & $17$ & $5.24$ & $18$ & $5.77$ & $19$ & $6.22$ & $20$ & $6.63$ & $21$ & $6.97$ \\
		$4$  & $16$ & $4.86$ & $17$ & $5.83$ & $19$ & $6.54$ & $20$ & $7.13$ & $21$ & $7.64$ & $22$ & $8.06$ & $23$ & $8.46$ \\
		$5$  & $19$ & $6.12$ & $20$ & $7.21$ & $21$ & $8.00$ & $22$ & $8.65$ & $23$ & $9.21$ & $24$ & $9.73$ & $24$ & $10.13$ \\
		$6$  & $21$ & $7.54$ & $22$ & $8.74$ & $23$ & $9.61$ & $24$ & $10.37$ & $25$ & $10.98$ & $25$ & $11.50$ & $26$ & $11.96$ \\
		$7$  & $22$ & $9.11$ & $24$ & $10.44$ & $25$ & $11.38$ & $26$ & $12.23$ & $27$ & $12.89$ & $27$ & $13.50$ & $27$ & $14.02$ \\
		\bottomrule
	\end{tabular}
	\captionof{table}{Iteration counts (it) and condition numbers ($\kappa$); Alg.~A; Yeti-footprint
		\label{tab:Yeti_vertex}}
\end{table}
\begin{table}[t]
	\newcolumntype{L}[1]{>{\raggedleft\arraybackslash\hspace{-1em}}m{#1}}
	\centering
	\renewcommand{\arraystretch}{1.25}
	\resizebox{\columnwidth}{!}{
	\begin{tabular}{l|L{1em}L{1.8em}|L{1em}L{1.8em}|L{1em}L{1.8em}|L{1em}L{1.8em}|L{1em}L{1.8em}|L{1em}L{1.8em}|L{1em}L{1.8em}}
		\toprule
	      \multicolumn{1}{l}{r$\;\;\diagdown\;\;$p\hspace{-1.8em}\;}
		& \multicolumn{2}{c|}{2}
		& \multicolumn{2}{c|}{3}
		& \multicolumn{2}{c|}{4}
		& \multicolumn{2}{c|}{5}
		& \multicolumn{2}{c|}{6}
		& \multicolumn{2}{c|}{7}
		& \multicolumn{2}{c}{8} \\
		& it & $\kappa$
		& it & $\kappa$
		& it & $\kappa$
		& it & $\kappa$
		& it & $\kappa$
		& it & $\kappa$
		& it & $\kappa$ \\
		\midrule
		$1$  & $42$ & $27$ & $49$ & $39$ & $57$ & $52$ & $65$ & $68$ & $72$ & $80$ & $82$ & $97$ & $90$ & $117$ \\
		$2$  & $44$ & $33$ & $50$ & $45$ & $56$ & $58$ & $61$ & $74$ & $69$ & $88$ & $75$ & $103$ & $84$ & $122$ \\
		$3$  & $49$ & $41$ & $54$ & $56$ & $59$ & $70$ & $64$ & $87$ & $70$ & $100$ & $76$ & $119$ & $81$ & $132$ \\
		$4$  & $53$ & $51$ & $60$ & $65$ & $63$ & $84$ & $69$ & $97$ & $74$ & $119$ & $80$ & $132$ & $85$ & $156$ \\
		$5$  & $59$ & $57$ & $64$ & $75$ & $70$ & $90$ & $75$ & $112$ & $80$ & $135$ & $84$ & $149$ & $90$ & $171$ \\
		$6$  & $62$ & $66$ & $68$ & $83$ & $74$ & $101$ & $79$ & $127$ & $85$ & $147$ & $89$ & $170$ & $96$ & $191$ \\
		$7$  & $65$ & $75$ & $74$ & $97$ & $79$ & $117$ & $83$ & $146$ & $88$ & $164$ & $94$ & $188$ & $100$ & $215$ \\
		\bottomrule
	\end{tabular}}
	\captionof{table}{Iteration counts (it) and condition numbers ($\kappa$); Alg.~B; Yeti-footprint
		\label{tab:Yeti_edge}}
\end{table}

\begin{table}[t]
	\newcolumntype{L}[1]{>{\raggedleft\arraybackslash\hspace{-1em}}m{#1}}
	\centering
	\renewcommand{\arraystretch}{1.25}
	\begin{tabular}{l|L{1em}L{1.8em}|L{1em}L{1.8em}|L{1em}L{1.8em}|L{1em}L{1.8em}|L{1em}L{1.8em}|L{1em}L{1.8em}|L{1em}L{1.8em}}
		\toprule
	      \multicolumn{1}{l}{r$\;\;\diagdown\;\;$p\hspace{-1.8em}\;}
		& \multicolumn{2}{c|}{2}
		& \multicolumn{2}{c|}{3}
		& \multicolumn{2}{c|}{4}
		& \multicolumn{2}{c|}{5}
		& \multicolumn{2}{c|}{6}
		& \multicolumn{2}{c|}{7}
		& \multicolumn{2}{c}{8} \\
		& it & $\kappa$
		& it & $\kappa$
		& it & $\kappa$
		& it & $\kappa$
		& it & $\kappa$
		& it & $\kappa$
		& it & $\kappa$ \\
		\midrule
		$1$  & $6$ & $1.16$ & $7$ & $1.27$ & $9$ & $1.43$ & $10$ & $1.57$ & $11$ & $1.71$ & $12$ & $1.83$ & $12$ & $1.95$ \\
		$2$  & $7$ & $1.32$ & $9$ & $1.49$ & $10$ & $1.67$ & $11$ & $1.81$ & $12$ & $1.95$ & $13$ & $2.09$ & $13$ & $2.19$ \\
		$3$  & $9$ & $1.57$ & $10$ & $1.81$ & $11$ & $2.00$ & $12$ & $2.17$ & $12$ & $2.28$ & $14$ & $2.46$ & $15$ & $2.58$ \\
		$4$  & $11$ & $1.89$ & $12$ & $2.18$ & $13$ & $2.41$ & $13$ & $2.59$ & $14$ & $2.76$ & $15$ & $2.91$ & $16$ & $3.05$ \\
		$5$  & $12$ & $2.26$ & $13$ & $2.61$ & $14$ & $2.87$ & $15$ & $3.06$ & $16$ & $3.27$ & $16$ & $3.42$ & $17$ & $3.57$ \\
		$6$  & $14$ & $2.70$ & $15$ & $3.09$ & $16$ & $3.38$ & $16$ & $3.61$ & $17$ & $3.82$ & $18$ & $4.00$ & $19$ & $4.17$ \\
		$7$  & $15$ & $3.19$ & $16$ & $3.63$ & $17$ & $3.95$ & $18$ & $4.21$ & $18$ & $4.43$ & $19$ & $4.63$ & $20$ & $4.81$ \\
		\bottomrule
	\end{tabular}
	\captionof{table}{Iteration counts (it) and condition numbers ($\kappa$); Alg.~C; Yeti-footprint
		\label{tab:Yeti_vertexEdge}}
\end{table}

\begin{table}[t]
	\newcolumntype{L}[1]{>{\raggedleft\arraybackslash\hspace{-1em}}m{#1}}
	\centering
	\renewcommand{\arraystretch}{1.25}
	\begin{tabular}{l|L{1em}L{1.8em}|L{1em}L{1.8em}|L{1em}L{1.8em}|L{1em}L{1.8em}|L{1em}L{1.8em}|L{1em}L{1.8em}|L{1em}L{1.8em}}
		\toprule
		\multicolumn{1}{l}{e$\;\;\diagdown\;\;$p\hspace{-1.8em}\;}
		& \multicolumn{2}{c|}{2}
		& \multicolumn{2}{c|}{3}
		& \multicolumn{2}{c|}{4}
		& \multicolumn{2}{c|}{5}
		& \multicolumn{2}{c|}{6}
		& \multicolumn{2}{c|}{7}
		& \multicolumn{2}{c}{Alg.} \\
		& it & $\kappa$
		& it & $\kappa$
		& it & $\kappa$
		& it & $\kappa$
		& it & $\kappa$
		& it & $\kappa$
		&  & $ $ \\
		\midrule
		$1$  &  $14$ &  $8$ &  $14$ &  $9$ &  $15$ &  $9$ &  $16$ &  $10$ &  $16$ &  $10$ & $16$ &  $11$ & \rdelim\}{3}{0.4cm}[\; \normalfont A] \\
		$2$  &  $14$ &  $8$ &  $15$ &  $9$ & $15$ &  $9$ & $16$ &  $10$ & $16$ &  $10$ & $17$ &  $11$ & \\
		$3$  & $14$ &  $8$ & $15$ &  $9$ & $15$ &  $9$ & $16$ &  $10$ & $17$ &  $10$ & $18$ &  $11$ & \\
		$1$  &  $49$ &  $338$ &  $50$ &  $437$ &  $52$ &  $500$ &  $55$ &  $622$ &  $58$ &  $698$ & $61$ &  $769$ & \rdelim\}{3}{0.4cm}[\; \normalfont B] \\
		$2$  &  $52$ &  $486$ &  $55$ &  $603$ & $59$ &  $743$ & $65$ &  $893$ & $67$ &  $1041$ & $72$ &  $1196$ \\
		$3$  & $64$ &  $852$ & $71$ &  $1072$ & $75$ &  $1317$ & $77$ &  $1619$ & $80$ &  $1841$ & $89$ &  $2122$ \\
		\bottomrule
	\end{tabular}
	\captionof{table}{Iteration counts (it) and condition numbers ($\kappa$) depending on grid size disparity; triple ring
		\label{tab:ratio dependence ring}}
\end{table}

\begin{table}[t]
	\newcolumntype{L}[1]{>{\raggedleft\arraybackslash\hspace{-1em}}m{#1}}
	\centering
	\renewcommand{\arraystretch}{1.25}
	\begin{tabular}{l|L{1.4em}L{1.4em}|L{1.4em}L{1.4em}|L{1.4em}L{1.4em}|L{1.4em}L{1.4em}|L{1.4em}L{1.4em}|L{1.4em}L{1.4em}|L{1.4em}L{1.4em}}
		\toprule
		\multicolumn{1}{l}{e$\;\;\diagdown\;\;$p\hspace{-1.8em}\;}
		& \multicolumn{2}{c|}{2}
		& \multicolumn{2}{c|}{3}
		& \multicolumn{2}{c|}{4}
		& \multicolumn{2}{c|}{5}
		& \multicolumn{2}{c|}{6}
		& \multicolumn{2}{c|}{7}
		& \multicolumn{2}{c}{Alg.} \\
		& it & $\kappa$
		& it & $\kappa$
		& it & $\kappa$
		& it & $\kappa$
		& it & $\kappa$
		& it & $\kappa$
		&  & $ $ \\
		\midrule
		$1$  & $15$ & $5$ & $17$ & $5$ & $18$ & $6$ & $19$ &  $7$ & $19$ & $7$ & $21$ & $7$ & \rdelim\}{3}{0.4cm}[\; \normalfont A] \\
		$2$  &  $17$ &  $5$ &  $18$ &  $6$ & $19$ & $7$ & $19$ &  $7$ & $20$ &  $8$ & $22$ &  $8$ & \\
		$3$  & $17$ &  $6$ & $19$ &  $7$ & $19$ &  $8$ & $20$ &  $8$ & $21$ &  $9$ & $22$ &  $9$ & \\
		$1$  &  $54$ &  $52$ &  $60$ &  $69$ &  $65$ &  $83$ &  $70$ &  $99$ &  $73$ &  $120$ & $79$ &  $137$ & \rdelim\}{3}{0.4cm}[\; \normalfont B] \\
		$2$  &  $62$ &  $72$ &  $69$ &  $96$ & $76$ &  $123$ & $78$ &  $151$ & $84$ &  $177$ & $90$ &  $215$ & \\
		$3$  & $77$ &  $133$ & $87$ &  $182$ & $95$ &  $233$ & $103$ &  $278$ & $110$ &  $335$ & $115$ &  $390$ & \\
		\bottomrule
	\end{tabular}
	\captionof{table}{Iteration counts (it) and condition numbers ($\kappa$) depending on grid size disparity;  Yeti-footprint
		\label{tab:ratio dependence yeti}}
\end{table}

The numerical experiments are carried out with B-spline discretization spaces
of maximum smoothness on grids that are constructed as follows.
For the circular ring, the
coarsest grid on each patch only consists of one element, i.e., the discretization
space (on the parameter domain) for each patch consists only of
polynomial functions. For the Yeti-footprint, the coarsest grid for the 
$20$ patches in the bottom of the domain consists of two elements per patch. 
The grid is constructed by adding an edge that connects the midpoints of
the two longer sides of the patch.
The other patches of the Yeti-footprint only consist of one element.
For both domains, the finer grids are constructed by refinement. For the first refinement step, i.e., for $r=1$, we insert one knot into each knot span. The new knot is not located in the
center, but at $4/9$ times the length of the knot span if the patch index
$k$ is even and at $6/11$ times that length if $k$ is odd. The subsequent
refinement steps $r=2,3,\dots$ are done uniformly. This refinement procedure
yields discretizations that are non-matching at the interfaces.

For these discretization spaces, we set up a dG IETI discretization as introduced
in Section~\ref{sec:3}. 
To solve the system \eqref{IETIProblem}, we use a preconditioned conjugate gradient (PCG) method with the proposed scaled Dirichlet preconditioner $M_{\mathrm{sD}}$.
The implementation is done using G+Smo~\cite{gismoweb}. The local subproblems
are solved with the sparse direct solver from the PARDISO project\footnote{\url{https://www.pardiso-project.org/}}.

For all numerical experiments, we start the iteration
with a randomly sampled vector with entries in the interval $[-1,1]$.
The stopping criterion is chosen as follows. The iteration stops if the
the $l_2$-norm of the residual vector drops below $10^{-6}$ times the
residual of the right-hand side. For each experiment, we show the number of iterations (it)
required to reach the stopping criterion and an estimate
of the condition number ($\kappa$)
of the preconditioned system matrix $M_{sD}F$
that has been derived using the PCG algorithm.

We start with the numerical experiments for the circular ring (Figure~\ref{subfig:Rings}). Table~\ref{tab:TripleRing_vertex} shows the 
results for Alg.~A (vertex values). We observe in that the condition number grows not more than
$\log^2 H/h$. Moreover, we also observe a weak growth in the spline degree,
which is smaller than the linear growth predicted by the theory.
Table~\ref{tab:TripleRing_edge} shows the results for Alg.~B (edge averages). 
Here, the condition numbers are much larger than before. The dependence on the
grid size and the spline degree seems to be the same as for Alg.~A.
The Table~\ref{tab:TripleRing_vertexEdge} comprises the data collected for Alg.~C (vertex values + edge averages). As expected, this approach yields the smallest
values for the condition numbers. The condition number seems to grow only like 
$\sqrt{p}$ in the spline degree and like $\log H/h$ in the grid size.

Next, we will take a look at the results on the Yeti-footprint (Figure~\ref{subfig:Yeti}). Table~\ref{tab:Yeti_vertex} reports on the results for
Alg.~A. We can again see that the increase in the condition number is like $\log^2 H/h$ and the increase in $p$ is sub-linear, almost like $\sqrt{p}$.
Table~\ref{tab:Yeti_edge} gives the condition number estimates for Alg.~B,
where we observe that the condition number grows almost
linearly in the spline degree $p$ (which indicates that the convergence theory
might be sharp in this respect). 
Alg.~C, whose results are given in Table~\ref{tab:Yeti_vertexEdge}, yields
again the best condition numbers.  Compared
to the circular ring, the condition numbers for the Yeti-footprint are smaller, which
might be connected to a more regular geometry mapping.

Finally, we present an experiment that indicates that the dependence of the condition number on the ratio of the grid sizes of neighboring patches ($\max_{k=1, \dots, K} \max_{\ell \in \mathcal{N}_\Gamma(k)} \frac{h_{k}}{h_{\ell}}$) is only present for Alg.~B. For this purpose, we compare Alg.~A with Alg.~B on both computational domains 
for particular grids that are constructed as follows.
Starting from the initial grid introduced above, we applied $4$ refinement
steps as above. Then, we uniformly refine the grids on all patches $\Omega^{(k)}$, where $k$ is even, additionally $e\in\{1,2,3\}$ times. Thus, the grid sizes between patches with even and odd degrees vary by a factor of $2^e$. 
We observe in Table~\ref{tab:ratio dependence ring} for the circular ring and in Table~\ref{tab:ratio dependence yeti} for the Yeti-footprint that the condition
number is almost independent of $e$ if Alg.~A is used, while it increases
like $2^e$ if Alg.~B is chosen. This means that the condition number grows linearly
in the ratio of the grid sizes, which coincides with the prediction of the
convergence theory.

\section{Conclusions}
\label{sec:6}
In this paper, we have extended the theory from \cite{SchneckenleitnerTakacs:2019}, 
where we established $p$-explicit condition number estimates for continuous
Galerkin IETI-DP solvers, to symmetric interior penalty
discontinuous Galerkin discretizations. Again, we have analyzed both the dependence
on the grid size and the spline degree. If the vertex values are chosen as primal
degrees of freedom (Alg.~A and C), the results are the same as for conforming
discretizations. If we use only the edge averages (Alg.~B), the condition number
estimate additionally depends on the ratio between the grid sizes of neighboring patches.
This can also be observed in the numerical experiments.

Alg.~B does not perform as good as the other options. However, this approach
seems to be beneficial if a non-conforming decomposition of the overall domain
into patches is considered, like a decomposition with T-junctions.
Although the IETI-DP methods also perform well on domains with non-trivial
geometry functions, analyzing the dependence of the condition
number on the geometry function is an interesting topic for future research.

\textbf{Acknowledgments.}
The first author was supported by the Austrian Science Fund (FWF): S117 and 
W1214-04. Also, the second author has received support from the Austrian Science
Fund (FWF): P31048.
Finally, the authors want to thank Ulrich Langer for fruitful discussions and help
with the study of existing literature.

\bibliography{literature.bib}

\end{document}